\theoremstyle{definition}
\newtheorem{theorem}{Theorem}[section]
\newtheorem{prop}[theorem]{Proposition}
\newtheorem{cor}[theorem]{Corollary}
\theoremstyle{definition}
\newtheorem{conj}[theorem]{Conjecture}
\newtheorem{definition}[theorem]{Definition}
\newtheorem{observation}[theorem]{Observation}
\theoremstyle{remark}
\newtheorem{rmk}[theorem]{Remark}
\newcommand{\Z}{\mathbb{Z}}
\newcommand{\A}{\mathcal{A}}
\newcommand{\B}{\mathcal{B}}
\newcommand{\X}{\mathcal{X}}
\newcommand{\Spec}{\operatorname{Spec}}
\newcommand{\Conf}{\operatorname{Conf}}
\DeclareMathOperator{\Out}{Out}
\DeclareMathOperator{\Aut}{Aut}
\DeclareMathOperator{\Inn}{Inn}
\newcommand\ontop[2]{\genfrac{}{}{0pt}{}{#1}{#2}}
\newcommand{\dud}[2]{{\small\begin{pmatrix}  #1 \\   \\ #2 \end{pmatrix}}}
\newcommand{\tcfl}[3]{{\small\begin{pmatrix}  & #1 \\ #2 &  \\ & #3 \end{pmatrix}}}
\newcommand{\tcfr}[3]{{\small\begin{pmatrix}  #1 & \\ & #3  \\ #2 & \end{pmatrix}}}
\newcommand{\tcfu}[3]{{\small\begin{pmatrix}  & #1 & \\ #2 & & #3  \\ & & \end{pmatrix}}}
\newcommand{\tcfd}[3]{{\small\begin{pmatrix}  & & \\ #1 & & #3  \\ & #2 & \end{pmatrix}}}
\newcommand{\qcf}[4]{{\small\begin{pmatrix}  & #1 & \\ #2 & & #4  \\ & #3 & \end{pmatrix}}}
\newcommand{\dlr}[2]{{\small\begin{pmatrix}  &  & \\ #1 & & #2  \\ & & \end{pmatrix}}}
\newcounter{Qcount}
\begin{document}

\stepcounter{Qcount}

\title{An Approach to Higher Teichmuller Spaces for Classical Groups}

\author{Ian Le}
\address{Perimeter Institute for Theoretical Physics\\ Waterloo, ON N2L 2Y5}
\email{ile@perimeterinstitute.ca}

\begin{abstract} Let $S$ be a surface, $G$ a simply-connected semi-simple group, and $G'$ the associated adjoint form of the group. In \cite{FG1}, the authors show that the moduli spaces of framed local systems $\X_{G',S}$ and $\A_{G,S}$ have the structure of cluster varieties when $G$ had type $A$. This was extended to classical groups in \cite{Le}. In this paper we give a method for constructing the cluster structure for general reductive groups $G$. The method depends on being able to carry out some explicit computations, and depends on some mild hypotheses, which we state, and which we believe hold in general. These hypotheses hold when $G$ has type $G_2,$ and therefore we are able to construct the cluster structure in this case. We also illustrate our approach by rederiving the cluster structure for $G$ of type $A$. Our goals are to give some heuristics for the approach taken in \cite{Le}, point out the difficulties that arise for more general groups, and to record some useful calculations. Forthcoming work by Goncharov and Shen gives a different approach to constructing the cluster structure on $\X_{G',S}$ and $\A_{G,S}$. We hope that some of the ideas here complement their more comprehensive work.
\end{abstract}

\maketitle

\tableofcontents

\section{Introduction}

Let $S$ be a topological surface $S$ with non-empty boundary, let $G$ be a simply connected, split, semi-simple group, and let $G'$ be the adjoint form of this group. One can then consider $\mathcal{L}_{G,S}$, the moduli space of $G$-local systems on the surface $S$, which are given by representations of $\pi_1(S)$ into $G$. We can consider two variations of this space, $\X_{G',S}$ and $\A_{G,S}$, which were constructed by Fock and Goncharov \cite{FG1}. These spaces are moduli of local systems with the additional data of framing at the boundary of $S$.

Once we add this framing data, the spaces $\X_{G',S}$ and $\A_{G,S}$ become rational, and moreover have positive rational coordinate charts. In many cases it is known that these rational coordinate charts come from a \emph{cluster structure} on these spaces. In these situations, the pair $(\X_{G',S}, \A_{G,S})$ forms a {\it cluster ensemble}. This was shown for $G$ of type $A$ in \cite{FG1} and extended to types $B, C, D$ in \cite{Le}. In this paper we give an approach that we expect to work for all semi-simple groups, carrying out the approach in full when $G$ has type $G_2$, and also showing how it can be used to rederive the cluster structure in type $A$. (A more comprehensive approach can be found in upcoming work of Goncharov and Shen.)

The existence of these cluster structures on $\X_{G',S}$ and $\A_{G,S}$ means that both these spaces have an atlas of coordinate charts such that all transition functions involve only addition, multiplication and division. In other words, these spaces each have a {\it positive atlas} and may be called {\it positive varieties}. Thus these spaces exhibit the phenomenon of total positivity discovered by Lusztig \cite{Lu}. These positive structures coincide with those discovered in \cite{FG1}, and can be used to derive results in higher Teichmuller theory.

In this paper, we give a method for constructing cluster ensemble structures on $\X_{G',S}$ and $\A_{G,S}$ for any semi-simple group $G$. The success of this algorithm depends on calculating that the functions we construct lie in a single tensor invariant space
$$[V_{\lambda} \otimes V_{\mu} \otimes V_{\nu}]^G.$$
Here $V_{\lambda}$ is the irreducible representation of $G$ with highest weight $\lambda$. In \cite{Le}, this computation was carried out for $G$ of type $B, C$ or $D$. In this paper, we carry out these computations in type $A$ and $G_2$, thus extending the results of \cite{Le}.

An important building block for the cluster structures on $\X_{G',S}$ and $\A_{G,S}$ are the cluster structures on double Bruhat cells and in flag varieties constructed in \cite{BFZ}, and based on earlier work \cite{FZ}, \cite{BZ}. The first step consists of constructing a cluster structure on $\X_{G',S}$ and $\A_{G,S}$ in the case where $S$ is an ideal triangle. In this case, the cluster structure is close to the cluster structure on the Borel $B$, which is the same as the cluster structure on the double Bruhat cell $G^{w_0,e}$ for the group $G$. Thus to complete the first step, we must understand the modifications to the cluster structure on $G^{w_0,e}$ we need to make in order to handle the case of a triangle. A cluster structure on each triangle is determined by the following data: an ordering of the vertices in each triangle and a reduced word for the longest element $w_0$ of the Weyl group $W$ of $G$.

In the second step, we first take an ideal triangulations of $S$. We then use a procedure called \emph{amalgamation} (\cite{FG3}), which tells us how to glue the cluster structure on each of these triangles together to get a cluster structure on $\X_{G',S}$ and $\A_{G,S}$. Amalgamating these cluster structures over all the triangles in the triangulation gives us the cluster structure on $\X_{G',S}$ and $\A_{G,S}$. 

In the case that $G=SL_n$ or $PGL_n$, if we take any triangulation and a particular choice of the reduced word for $w_0$, the resulting cluster structure exhibits $S_3$ symmetry (recall that the cluster structure on a triangle initially depended on an ordering of the vertices of the triangle). For other groups, we need to find sequences of mutations that relate the seeds that differ by $S_3$ symmetries. Moreover, in order to relate seeds that correspond to different triangulations, we need to relate seeds coming from triangulations that differ by a ``flip.'' The operation of a flip takes a diagonal of a quadrilateral and replaces by the other diagonal.

Once one shows that $S_3$ symmetries and flips are realized by cluster mutation, we get that mapping class group of $S$ acts by automorphisms of the cluster algebra. We can then say that the mapping class group is contained in the \emph{cluster modular group}. We know this when $G$ has type $A, B, C$ or $D$. In this paper, we give these sequences in the case where $G$ has type $G_2$. We would like to point out that understanding the sequence of mutations realizing $S_3$ symmetries and flips is one of the major remaining difficulties in types $E$ and $F$, in addition to identifying the functions in the cluster algebra as tensor invariants.

While writing this paper, we learned of forthcoming work of Goncharov and Shen that shows that $\X_{G',S}$ and $\A_{G,S}$ are cluster varieties for all reductive groups $G$. Nevertheless, we believe that our viewpoint, and some of the auxiliary results and calculations, are interesting in their own right.

Our hope is that in describing the procedure for constructing the cluster structure on $\A_{G,S}$ in a uniform way, we give some motivations for the constructions in \cite{Le} that seemingly proceed on a case-by-case basis. Furthermore, the examples and computations in this paper can serve as an introduction to the more involved computations in that paper. Here are the aims of this paper:

\begin{enumerate}
\item To give a uniform way to construct the cluster algebra on $\A_{G,S}$ for reductive $G$.
\item To explain the heuristics we used in deriving the cluster structure for classical groups in \cite{Le}.
\item To carry out the procedure for the construction of the cluster algebra in full in types $A_n$ and $G_2$. 
\item To give one point of view on the elements of the cluster mapping class group coming from outer automorphisms of $G$.
\item To record some computations in types $A_n$ and $G_2$ analogous to those performed in \cite{Le} for types $B, C, D$.
\item To give a more conceptual derivation of the sequence of mutations relating the two reduced words for $w_0 \in G$ when $G$ has type $G_2$. This calculation was first performed in \cite{BZ} and again in \cite{FG3}, and in both cases, the derivation was rather complicated.
\end{enumerate}

Here is the outline of the paper. In Section 2, we review some of the work of Fock and Goncharov. We define the spaces $\X_{G',S}$ and $\A_{G,S}$ and relate them to spaces of configurations of points in $G/B$ and $G/U$: $\Conf_m \A_G$ and $\Conf_m \B_G$. We also recall the necessary facts about cluster algebras.

In Section 3, we show how, starting with the constructions of \cite{BFZ}, one can derive the cluster structure on $\Conf_3 \A_G$, assuming that the cluster functions lie in tensor invariant spaces. We also describe the data needed to glue triangles. Throughout this section, our running example will be type $A$. In Section 4, we explain how this construction allows one to realize the action of outer automorphisms of $G$ on $\A_{G,S}$. In Section 5, we verify that the cluster functions lie in tensor invariant spaces in type $G_2$, thus allowing us to construct the cluster algebra structure in this case. We also exhibit the sequences of mutations realizing $S_3$ symmetries and the flip.

\section{Background}

\subsection{Setup}

Let $S$ be an oriented surface. $S$ should be of finite type and have non-empty boundary. The boundary components of $S$ may contain a finite number of marked points. We will always take $S$ to be hyperbolic, meaning that it admits the structure of a hyperbolic surface such that the boundary components that do no contain marked points are cusps, and all the marked points are also cusps. Such $S$ will have ideal triangulations. In practice, this means that $S$ either has negative Euler characteristic, or contains enough marked points on the boundary.

Let $G$ be a split semi-simple algebraic group. When $G$ is adjoint, i.e., has trivial center (for example, when $G=PGL_m$), we can define a higher Teichmuller space $\X_{G,S}$. When $G$ is simply-connected  (for example, when $G=SL_m$), we can define the higher Teichmuller space $\A_{G,S}$. Note that as far as the author knows, the theory of cluster algebras on moduli of local systems only exists for split reductive groups. Splitness seems to be necessary because of the relationship with total positivity. In this paper, we deal only with semi-simple groups because we use the constructions and results of \cite{BFZ}.

The spaces $\X_{G,S}$ and $\A_{G,S}$ will parameterize local systems on $S$ with structure group $G$ with some extra structure of a framing on the boundary components of $S$.

When $S$ has at least one hole, the spaces $\X_{G,S}$ and $\A_{G,S}$ have a distinguished collection of coordinate systems which allows us to extend Lusztig's theory of total positivity to these spaces \cite{Lu}. Our goal will be to show that these coordinate systems are a consequence of the cluster structure on these spaces.

\subsection{Definition of the spaces $\X_{G,S}$ and $\A_{G,S}$}

The data of a framing of a local system involves the flag variety associated to $G$. Choose a Borel subgroup $B \subset G$ (a maximal solvable subgroup) and let $U := [B,B]$ be a maximal unipotent subgroup in $G$. Let ${\mathcal B} := G/B$ be the flag variety. Let $\A := G/U$ be the ``principal affine space.'' We will refer to elements of $\A$ as ``principal flags.''

Let ${\mathcal L}$ be a $G$-local system on $S$. For any space $X$ equipped with a $G$-action, we can form the associated bundle ${\mathcal L}_{X}$. For $X=G/B$ we get the associated flag bundle ${\mathcal L}_{\mathcal B}$, and for $X=G/U$, we get the associated principal flag bundle ${\mathcal L}_{\mathcal A}$.

\begin{definition}
A {\rm framed  $G$-local system on $S$} is a  $G$-local system ${\mathcal L}$ on $S$, and a flat section of the restriction of ${\mathcal L}_{\mathcal B}$ to the punctured boundary of $S$.

The space ${\mathcal X}_{G', S}$ is the moduli space of framed $G$-local systems on $S$. 
\end{definition}

The definition of the space ${\mathcal A}_{G, S}$ is slightly more complicated. It involves twisted local systems. We shall define this notion.

Let $G$ is simply-connected. The maximal length element $w_0$ of the Weyl group of $G$ has two natural lifts to $G$, typically denoted $\overline w_0$ and $\overline{\overline{w_0}}$. Let $s_G:= {\overline w}_0^2 = \overline{\overline{w_0}}^2$. It turns out that $s_G$ is in the center of $G$ and that $s^2_G =e$. Depending on $G$, $s_G$ will have order one or order two. For example, for $SL_n$, $s_G$ has order one or two depending on whether $n$ is odd or even, respectively. For $G=G_2$, $s_G = e$.

The fundamental group $\pi_1(S)$ has a natural central extension by $\Z / 2\Z$: Let $T'S$ be the \emph{punctured tangent bundle}, which is the tangent bundle of $S$ with the zero-section removed. $\pi_1(T'S)$ is a central extension of $\pi_1(S)$ by $\Z$:

$$\Z \rightarrow \pi_1(T'S) \rightarrow \pi_1(S).$$

The quotient of $\pi_1(S)$ by the central subgroup $2 \Z \subset \Z$, gives ${\overline \pi}_1(S)$ which is a central extension of $\pi_1(S)$ by $\Z / 2\Z$:

$$\Z / 2\Z \rightarrow {\overline \pi}_1(S) \rightarrow \pi_1(S).$$

Let $\sigma_S \in {\overline \pi}_1(S)$ denote the non-trivial element of the center.

A {\it twisted $G$-local system} is a representation of ${\overline \pi}_1(S)$ in $G$ such that $\sigma_S$ maps to $s_G$. Such a representation gives a local system on $T'S$.

We now describe the framing data for a twisted local system. A twisted local system $\mathcal{L}$ gives an associated principal affine bundle ${\overline {\mathcal L}}_{\mathcal A}$ on the punctured tangent bundle $T'S$. For the boundary components of $S$, we can construct sections of the punctured tangent bundle lying above these boundary components. For any boundary component, take the section given by the outward pointing unit tangent vectors along this component. We get a bunch of loops and arcs in $T'S$ lie over the boundary of $S$ which we will call the {\em lifted boundary}.

\begin{definition}

A {\it decorated $G$-local system} on $S$ consists of $({\mathcal L}, \alpha)$, where ${\mathcal L}$ is a twisted local system on $S$ and $\alpha$ is a flat section of ${\overline {\mathcal L}}_{\mathcal A}$ restricted to the lifted boundary.

The space ${\mathcal A}_{G, S}$ is the moduli space of decorated $G$-local systems on $S$.
\end{definition}

Note that in the case where $s_G=e$, a decorated local system is just a local system on $S$ along with a flat section of ${\mathcal L}_{\mathcal A}$ restricted to the boundary. One can generally pretend that this is the case without much danger.

\subsection{Relation to configurations of flags}

Let us outline the general procedure for producing positive coordinate systems or cluster charts on the spaces ${\X}_{G', S}$ and ${\A}_{G, S}$. The key idea is the following. If $S' \subset S$ is a surface, then any framed (twisted) local system on $S$ restricts to give a framed (respectively, decorated) local system on $S'$. Our approach will be to take an ideal triangulation of $S'$ and use this to reduce our problem to studying framed (respectively, decorated) local systems on a disc with three or four marked points.

We begin with the case where $S$ is a disc with marked points on the boundary. Let us fix $S$ to be a disc with $m$ marked points on the boundary. In this case, ${\X}_{G', S}$ is the space of configurations of points in the flag variety ${\B}: = G'/B$,
$${\Conf}_{m}({\B}) = G' \backslash (G'/B)^m.$$
We call ${\A}_{G, S}$ the space of twisted configurations of points of the principal affine variety ${\A}:= G/U$. It can be identified with
$${\Conf}_{m}({\A}) = G \backslash (G/U)^m,$$
although this identification is not completely natural. Let us elaborate. Recall that ${\A}_{G, S}$ is the space of twisted local systems on $S$, which consists of local systems on $T'S$ such that the central element is mapped to $s_G$. The framing consists of a flag section of ${\overline {\mathcal L}}_{\mathcal A}$ over the lifted boundary, which consists of outward-pointing tangent vectors along the boundary. Label regions between the $m$ marked points on $S$ by $1, 2, \dots, m$ going counterclockwise. Choose a point in the lifted boundary for each these regions, and call them $x_1, \dots, x_m$. To identify ${\A}_{G, S}$ with a configuration of points in ${\A}:= G/U$, we need to parallel-transport all the sections to a particular point. Let us choose $x_1$. Then there is a unique path in $T'S$ from each $x_i$ to $x_1$ where the tangent vectors rotate clockwise. Parallel transport along these parths allows us to identify ${\A}_{G, S}$ with ${\Conf}_{m}({\A})$.

Suppose that a point in ${\Conf}_{m}({\A})$ becomes identified with $(F_1, \dots, F_m) \in \Conf_m (\A)$. If we had chosen the point $x_2$ and performed the same procedure, we would have gotten the configuration $(F_2, \dots, F_m, s_G\cdot F_1$. This is why the natural cyclic shift map (which we will sometimes call the twisted cyclic shift map) is defined by the formula
$$T \cdot (F_1, F_2, \dots, F_m) = (F_2, \dots, F_m, s_G \cdot F_1).$$

Analogously, on the space ${\Conf}_{m}({\B})$ we can define the cyclic shift map 
$$T: {\Conf}_{m}({\B}) \rightarrow {\Conf}_{m}({\B})$$
by the formula
$$T \cdot (G_1, G_2, \dots, G_m) = (G_2, \dots, G_m, G_1).$$

Once we have the sequence of mutations changes of triangulation, we will see that twisted cyclic shift maps can be realized by elements of the cluster mapping class group.

Let us return to the outlining the contruction of the cluster structure on ${\A}_{G, S}$ which we identify with $\Conf_{m}(\A)$. We get coordinate charts on these spaces attached to any triangulation of an $m$-gon. Fix such a triangulation.

We begin with the space ${\Conf}_{m}({\B})$. If we place the $m$ flags at the vertices of an $m$-gon, then to each triangle in the triangulation of the $m$-gon, we get a configuration of three flags. To this configuration of three flags, we attach some {\em face functions}. The face functions give a positive coordinate chart on ${\Conf}_{3}({\B})$.

Any edge in the triangulation belongs to two triangles. We attach to each edge a set of {\em edge functions}, which depend on the four flags at the corners of these two triangles. For any edge, its edge functions along with the face functions for the triangles sharing that edge form a positive coordinate chart on ${\Conf}_{4}({\B})$. Thus the face functions give invariants of three flags, while the edge functions tell us how to glue two configurations of three flags into a configuration of four flags. Gluing along all edges of a triangulation will give a configuration of $m$ flags.

There are also positive coordinate charts on ${\Conf}_{m}({\A})$ attached to each triangulation of an $m$-gon. Note that the natural order on the set $\{1, \dots, m\}$ induces a natural order on each subset, and in particular for the vertices of any triangle. Therefore, for each triangle, we can identify the cluster structure there with the one on $\Conf_{3}(\A)$. In order to show that the functions we construct on ${\Conf}_{m}({\A})$ are compatible with the twisted cyclic shift map, it will be enough to show this for $\Conf_{3}(\A)$. We will return to this when we discuss $S_3$ symmetries.

In more detail, to each edge in the triangulation, we attach a set of {\em edge functions} which depend on the two flags at the ends of the edge. To each triangle in the triangulation, we get a configuration of three principal flags, and in addition to the edge functions of each pair of edges in the triangle, we attach some {\em face functions}, which depend on all three of the principal flags. In contrast to the situation for ${\Conf}_{m}({\B})$, instead of parameterizing ways of gluing configurations of three flags into a configuration of four flags, the edge functions place restrictions on which triangles can be glued together. Two triangles can be glued along an edge if and only if the edge functions coincide.

We now discuss the general case. Let $S$ be a hyperbolic surface. Take any hyperbolic structure on $S$ such that the boundary components that do no contain marked points are cusps, and all the marked points on the remainng boundary components are also cusps. Then the universal cover of $S$ will be a subset of the hyperbolic plane, and all thesepreimages of these cusps will lie at the boundary at infinity of the hyperbolic plane. An {\em ideal triangulation} of $S$ consists of a triangulation of $S$ that has vertices at the cusps of $S$. Because the edge and face functions for the spaces ${\X}_{G', S}$ and ${\A}_{G, S}$ were local in nature (they only depended on between two and four flags nearby to the edge or face), these functions still make sense for the surface $S$. Taking edge and face functions for the resulting triangulation gives the appropriate coordinate charts on ${\X}_{G', S}$ and ${\A}_{G, S}$. Note that boundary edges give functions on ${\A}_{G, S}$ but not on ${\X}_{G', S}$. 

If we started with positive/cluster charts for ${\Conf}_{3}({\B})$, ${\Conf}_{4}({\B})$, ${\Conf}_{2}({\A})$ and ${\Conf}_{3}({\A})$, we obtain positive/cluster coordinate charts for ${\X}_{G', S}$ and ${\A}_{G, S}$. The coordinate charts coming from different triangulations of the surface give a positive atlas on ${\X}_{G', S}$ and ${\A}_{G, S}$.

One of the goals of this paper will be to show that these edge and face functions can be realized as part of a cluster ensemble structure on the pair of spaces $({\mathcal X}_{G', S}, {\mathcal A}_{G, S})$.

\subsection{Cluster algebras} \label{cluster}

We review here the basic definitions of cluster algebras, following \cite{W}. Cluster algebras are commutative rings that come equipped with a collection of distinguished sets of generators, called \emph{cluster variables}. These are sometimes called $\A$-coordinates, as they are functions on the $\A$ space. Each set of generators forms a \emph{cluster}. Starting from an initial cluster, one generates other clusters by the process of \emph{mutation}.

Each set of generators belongs to a seed, which roughly consists of the set of generators along with a $B$-matrix, which encodes how one mutates from one seed to any adjacent seed.

The same combinatorial data underlying a seed gives rise to a second, related, algebraic structure, called $\X$-coordinates. The $\X$-coordinates are functions on the $\X$ space. They also are grouped into clusters which can be obtained from one another by mutation.

Cluster algebras and $\X$-coordinates are defined by seeds. A seed $\Sigma = (I,I_0,B,d)$ consists of the following data: 
\begin{enumerate}
\item An index set $I$ with a subset $I_0 \subset I$ of ``frozen'' indices. 
\item A rational $I \times I$ \emph{exchange matrix} $B$. It should have the property that $b_{ij} \in \Z$ unless both $i$ and $j$ are frozen.  
\item A set $d = \{d_i \}_{i \in I}$ of positive integers that skew-symmetrize $B$: $$b_{ij}d_j = -b_{ji}d_i$$ for all $i,j \in I.$ The integers $d_i$ are called \emph{multipliers}.
\end{enumerate}

For most purposes, the values of $d_i$ are only important up to simultaneous scaling. Also note that the values of $b_{ij}$ where $i$ and $j$ are both frozen will play no role in the cluster algebra, though it is sometimes convenient to assign values to $b_{ij}$ for bookkeeping purposes. These values become important in \emph{amalgamation}, where one unfreezes some of the frozen variables.

Let $k \in I \setminus I_0$ be an unfrozen index of a seed $\Sigma$. Then we can perform a mutation at $k$ to get another seed $\Sigma' = \mu_k(\Sigma)$. The frozen variables and $d_i$ are preserved, and the exchange matrix $B'$ of $\Sigma'$ satisfies
\begin{align}\label{eq:matmut}
b'_{ij} = \begin{cases}
-b_{ij} & i = k \text{ or } j=k \\
b_{ij} & b_{ik}b_{kj} \leq 0 \\
b_{ij} + |b_{ik}|b_{kj} & b_{ik}b_{kj} > 0.
\end{cases}
\end{align}

To a seed $\Sigma$ we associate a collection of \emph{cluster variables} $\{A_i\}_{i \in I}$ and a split algebraic torus $\A_\Sigma := \Spec \Z[A^{\pm 1}_I]$.

If $\Sigma'$ is obtained from $\Sigma$ by mutation at $k \in I \setminus I_0$, there is a birational \emph{cluster transformation} $\mu_k: \A_\Sigma \to \A_{\Sigma'}$.  This map is given by the \emph{exchange relation}

\begin{align}\label{eq:Atrans}
\mu_k^*(A'_i) = \begin{cases}
A_i & i \neq k \\
A_k^{-1}\biggl(\prod_{b_{kj}>0}A_j^{b_{kj}} + \prod_{b_{kj}<0}A_j^{-b_{kj}}\biggr) & i = k.
\end{cases}
\end{align} 

Given a seed $\Sigma$ we also associate a second algebraic torus $\X_\Sigma := \Spec \Z[X_I^{\pm 1}]$.  If $\Sigma'$ is obtained from $\Sigma$ by mutation at $k \in I \setminus I_0$, we again have a birational map $\mu_k: \X_\Sigma \to \X_{\Sigma'}$, defined by
\begin{align}\label{eq:Xtrans}
\mu_k^*(X'_i) = \begin{cases}
X_iX_k^{[b_{ik}]_+}(1+X_k)^{-b_{ik}} & i \neq k  \\
X_k^{-1} & i = k,
\end{cases}
\end{align}
where $[b_{ik}]_+:=\mathrm{max}(0,b_{ik})$.

There is a natural map from $\A_{\Sigma}$ to $\X_{\Sigma}$. Let us assume that the entries of the $B$-matrix are all integers. Then we can define $p: \A_\Sigma \to \X_\Sigma$ by
$$p^*(X_i) = \prod_{j \in I}A_j^{B_{ij}}.$$

This formula appears to depend on the seed, but it actually intertwines the mutation of both the $\A$-coordinates and the $\X$-coordinates. We have the commutative diagram

\vspace{-2mm}
 \[
\begin{tikzcd}
\A_{\Sigma} \arrow[dashed]{r}{\mu_k} \arrow{d}{p} & \A_{\Sigma'} \arrow{d}{p'} \\
\X_{\Sigma} \arrow[dashed]{r}{\mu_k} & \X_{\Sigma'} \\
\end{tikzcd}
\]

\section{General groups}
\subsection{The cluster algebra on $B^-$}

We describe here how one can find the cluster algebra structure on $\Conf_n \A_G$ for any simply-connected simple group $G$. In particular, we believe this procedure will give the correct seed for $\Conf_n \A_G$ when $G$ is of type $E, F$ or $G$. The procedure relies on verifying some facts about certain functions on $\Conf_n \A_G$ through a non-obvious computation, which we will only carry out the process in full for type $G_2$. 

We first give a procedure for constructing the cluster structure on $\Conf_3 \A_G$. In \cite{Le}, for all classical groups $G$, we related the cluster structure on $\Conf_3 \A_{G}$ to Berenstein, Fomin and Zelevinsky's cluster structure on $B$, the Borel in the group $G$ (\cite{BFZ}). Let us review how this story goes. Let $(A_1, A_2, A_3)$ be a triple of principal flags in $\Conf_3 \A_G$. There will be functions attached to these vertices that only depend on two of the three flags. We will call these the {\em edge functions}. Let us call all other functions, which depend on all three flags, {\em face functions}. We wish to construct the face functions, as well as the edge functions attached to the edges $A_1A_2$, $A_2A_3$, and $A_1A_3$.

We first consider the subset of $\Conf_3 \A_G$ given by triples of principal flags in the image of $B^-$ under the map
$$i: b \in B^- \rightarrow (U^-, \overline{w_0}U^-, b\cdot \overline{w_0}U^-) \in \Conf_3 \A_{G}.$$

The cluster structure on $B^-$ will give us the cluster structure on $\Conf_3 \A_G$ with all the functions on the edge $A_1A_2$ removed. This will give us all the face functions, as well as all the functions on the two edges $A_2A_3$ and $A_1A_3$. It also gives us all the arrows of the quiver connecting face vertices as well as face vertices with the vertices on edges $A_2A_3$ and $A_1A_3$.

Let us review how to construct the cluster structure on $B^-$. A more detailed account can be found in \cite{BFZ} and \cite{FG3}.

Let $w_0$ be the longest word in the Weyl group of $G$. The double Bruhat cell 
$$G^{w_0,e}:= B^+w_0B^+ \cap B^-eB^-$$
is the open part of $B^-$. Take any reduced-word $s_{i_1}\dots s_{i_K}$ for $w_0$. Our convention will be that we read the word from right to left, i.e., the simple reflection $s_K$ followed by the simple reflection $s_{K-1}$, etc. Here $1 \leq i_j \leq n$, where $n$ is the number of nodes in the Dynkin diagram. For each node $i$ of the Dynkin diagram, there is a corresponding simple reflection $s_i$ in the Weyl group $W$.

For each reduced-word expression for $w_0$ there is a corresponding seed for the cluster algebra on $B^-$. The $B$-matrix for this seed can encoded via a quiver. This quiver will have $n+K$ vertices, of which $2n$ are frozen edge vertices. Each vertex of the quiver belongs to one of the nodes of the Dynkin diagram. If the simple reflection $s_i$ occurs $a_i$ times in the reduced-word for $w_0$, there will be $a_i+1$ vertices belonging to the node $i$. Of these, two vertices (the first and the last) will be frozen.

The multipliers $d_i$ for the vertices are determined by the nodes they are associated with. If a node in the Dynkin diagram is associated with a short root, the multipliers for the vertices belong to this node are $1$. If a node in the Dynkin diagram is associated with a long root, the multipliers for the vertices belong to this node are $2$ in types $B, C, F$ and $3$ in type $G$. Alternatively, let the node $i$ correspond to the root $\alpha_i$, and let us normalize the lengths of the roots so that the short roots have length $1$. Then the multipliers for the vertices belonging to node $i$ are $(\textrm{length of } \alpha_i)^{2}$.

We have described the vertices of the quiver and the multipliers for these vertices. The arrows in the quiver carry the following information:

\begin{itemize}
\item An arrow from $j$ to $i$ means that $b_{ij}>0$ and $b_{ji}<0$.
\item $|b_{ij}|=2$ if $d_i=2$ and $d_j=1$.
\item $|b_{ij}|=3$ if $d_i=3$ and $d_j=1$.
\item $|b_{ij}|=1$ otherwise.
\end{itemize}

The procedure for constructing this quiver from the reduced word for $w_0$ is complicated, and perhaps unilluminating, to state in full generality. Instead, let us look at the example of $SL_4$ in detail.

One reduced word for $w_0 \in W_{SL_4}$ is 
$$s_1 s_2 s_1 s_3 s_2 s_1.$$
The corresponding quiver is for $B^-$ is

\begin{center}
\begin{tikzpicture}[scale=2]

  \node (x103) at (-0.5,0.9) {\Large $\ontop{x_{103}}{\bullet}$};
  \node (x013) at (0.5,0.9) {\Large $\ontop{x_{013}}{\bullet}$};

  \node (x202) at (-1,0) {\Large $\ontop{x_{202}}{\bullet}$};
  \node (x112) at (0,0) {\Large $\ontop{x_{112}}{\bullet}$};
  \node (x022) at (1,0) {\Large $\ontop{x_{022}}{\bullet}$};

  \node (x301) at (-1.5,-0.9) {\Large $\ontop{x_{301}}{\bullet}$};
  \node (x211) at (-0.5,-0.9) {\Large $\ontop{x_{211}}{\bullet}$};
  \node (x121) at (0.5,-0.9) {\Large $\ontop{x_{121}}{\bullet}$};
  \node (x031) at (1.5,-0.9) {\Large $\ontop{x_{031}}{\bullet}$};


  \draw [->] (x013) to (x103);
  \draw [->] (x022) to (x112);
  \draw [->] (x112) to (x202);
  \draw [->] (x031) to (x121);
  \draw [->] (x121) to (x211);
  \draw [->] (x211) to (x301);

  \draw [->] (x121) to (x022);
  \draw [->] (x211) to (x112);
  \draw [->] (x112) to (x013);
  \draw [->, dashed] (x301) to (x202);
  \draw [->, dashed] (x202) to (x103);

  \draw [->] (x202) to (x211);
  \draw [->] (x103) to (x112);
  \draw [->] (x112) to (x121);
  \draw [->, dashed] (x013) to (x022);
  \draw [->, dashed] (x022) to (x031);

\draw[yshift=-1.5cm]
  node[below,text width=6cm] 
  {
  Figure 1. The quiver for the cluster algebra on $B^-_{SL_{4}}$.
  };

\end{tikzpicture}
\end{center}

For the moment, let us ignore the names of the vertices. In the above quiver, there are three rows. The vertices belonging to node $1$ of the Dynkin diagram are on the bottom row; the vertices belonging to node $2$ are in the middle row; and the vertices belonging to node $3$ are on the top row. $SL_n$ is a simply-laced group, so $d_i=1$ for all vertices. The vertices on either end of each row are frozen vertices. The leftmost vertices in each row are the edge functions for the edge $A_1A_3$. The rightmost vertices in each row are the edge functions for the edge $A_2A_3$. The flags $A_1,A_2, A_3$ are oriented as follows:

\begin{center}
\begin{tikzpicture}[scale=2]

  \node (A_1) at (-1,0) {$A_1$};
  \node (A_2) at (1,0) {$A_2$};
  \node (A_3) at (0,1.7) {$A_3$};

  \draw [] (A_1) -- (A_2);
  \draw [] (A_2) -- (A_3);
  \draw [] (A_3) -- (A_1);

\end{tikzpicture}
\end{center}

The dotted arrows only go between frozen vertices. A dotted arrow between vertices $i$ and $j$ means that $b_{ij}$ is half what it would be if the arrow were solid. Thus dotted arrows are ``half-arrows.''

Each occurence of the reflections $s_1, s_2, s_3$ gives a portion of the quiver as depicted in Figures 2-4:

\begin{center}
\begin{tikzpicture}[scale=2]

  \node (x211) at (0,0) {$\bullet$};

  \node (x121) at (-0.5,-0.9) {$\bullet$};
  \node (x112) at (0.5,-0.9) {$\bullet$};

  \draw [->, postaction={decorate}] (x112) -- (x121) node [midway, above] {$s_1$};
  \draw [->, dashed] (x121) to (x211);
  \draw [->, dashed] (x211) to (x112);

\draw[yshift=-1cm]
  node[below,text width=6cm] 
  {
  Figure 2. Portion of the quiver corresponding to the simple reflection $s_1$.
  };

\end{tikzpicture}
\end{center}

\begin{center}
\begin{tikzpicture}[scale=2]

  \node (x310) at (0,0.9) {$\bullet$};

  \node (x220) at (-0.5,0) {$\bullet$};
  \node (x211) at (0.5,0) {$\bullet$};

  \node (x121) at (0,-0.9) {$\bullet$};

  \draw [->, postaction={decorate}] (x211) -- (x220) node [midway, above] {$s_2$};

  \draw [->, dashed] (x121) to (x211);
  \draw [->, dashed] (x220) to (x310);

  \draw [->, dashed] (x220) to (x121);
  \draw [->, dashed] (x310) to (x211);

\draw[yshift=-1cm]
  node[below,text width=6cm] 
  {
  Figure 3. Portion of the quiver corresponding to the simple reflection $s_2$.
  };

\end{tikzpicture}
\end{center}

\begin{center}
\begin{tikzpicture}[scale=2]

  \node (x310) at (-0.5,0.9) {$\bullet$};
  \node (x301) at (0.5,0.9) {$\bullet$};

  \node (x211) at (0,0) {$\bullet$};

  \draw [->, postaction={decorate}] (x301) -- (x310) node [midway, above] {$s_3$};
  \draw [->,dashed] (x211) to (x301);
  \draw [->,dashed] (x310) to (x211);

\draw[yshift=-0.5cm]
  node[below,text width=6cm] 
  {
  Figure 4. Portion of the quiver corresponding to the simple reflection $s_3$.
  };

\end{tikzpicture}
\end{center}

We glue these these pieces together according to the reduced word for $w_0$ as in Figure 5 to obtain the quiver for the cluster algebra. The triangles $x_{301}x_{202}x_{211}$, $x_{211}x_{112}x_{121}$, and $x_{121}x_{022}x_{031}$ correspond the simple reflection $s_1$. The quadrilaterals $x_{202}x_{211}x_{112}x_{103}$ and $x_{112}x_{121}x_{022}x_{013}$ correspond the simple reflection $s_1$. The triangle $x_{103}x_{112}x_{013}$ corresponds to the simple reflection $s_3$.

\begin{center}
\begin{tikzpicture}[scale=2]

  \node (x310) at (-0.5,0.9) {$\bullet$};
  \node (x301) at (0.5,0.9) {$\bullet$};

  \node (x220) at (-1,0) {$\bullet$};
  \node (x211) at (0,0) {$\bullet$};
  \node (x202) at (1,0) {$\bullet$};

  \node (x130) at (-1.5,-0.9) {$\bullet$};
  \node (x121) at (-0.5,-0.9) {$\bullet$};
  \node (x112) at (0.5,-0.9) {$\bullet$};
  \node (x103) at (1.5,-0.9) {$\bullet$};


  \draw [->, postaction={decorate}] (x301) -- (x310) node [midway, above] {$s_3$};
  \draw [->, postaction={decorate}] (x202) -- (x211) node [midway, above] {$s_2$};
  \draw [->, postaction={decorate}] (x211) -- (x220) node [midway, above] {$s_2$};
  \draw [->, postaction={decorate}] (x103) -- (x112) node [midway, above] {$s_1$};
  \draw [->, postaction={decorate}] (x112) -- (x121) node [midway, above] {$s_1$};
  \draw [->, postaction={decorate}] (x121) -- (x130) node [midway, above] {$s_1$};

  \draw [->] (x112) to (x202);
  \draw [->] (x121) to (x211);
  \draw [->] (x211) to (x301);
  \draw [->, dashed] (x130) to (x220);
  \draw [->, dashed] (x220) to (x310);

  \draw [->] (x220) to (x121);
  \draw [->] (x310) to (x211);
  \draw [->] (x211) to (x112);
  \draw [->, dashed] (x301) to (x202);
  \draw [->, dashed] (x202) to (x103);

\draw[yshift=-1.5cm]
  node[below,text width=6cm] 
  {
  Figure 5. How the quiver corresponds to the reduced word $s_1 s_2 s_1 s_3 s_2 s_1$ for $B^-_{SL_{4}}$.
  };

\end{tikzpicture}
\end{center}

A dotted arrow is equivalent to half a solid arrow. When we perform the gluing, two dotted arrows in the same direction glue to give us a solid arrow, whereas two dotted arrows in the opposite direction cancel to give us no arrow. (In the $SL_4$ example above, it turns out we never glue dotted arrows going in the opposite direction, but this does happen in general.) This is the process of amalgamation. For general groups, the picture is similar: for a reduced word for $w_0$, each simple reflection gives a portion of the quiver, and amalgamating these portions together gives the full quiver for the cluster algebra on $B^-$.

Let us now describe the functions attached to the vertices of the quiver. The functions are given by \emph{generalized minors} of $B^-$, which we now define. Let $G_0 = U^- H U^+ \subset G$ be the open subset of $G$ consisting of elements that have a Gaussian decomposition $x=[x]_- [x]_0 [x]_+$. Then for any two elements $u, v \in W$, and any fundamental weight $\omega_i$, we have the {\em generalized minor} $\Delta_{u\omega_i, v\omega_i}(x)$ defined by

$$\Delta_{u\omega_i, v\omega_i}(x) := ([\overline{u}^{-1}x \overline{v}]_0)^{\omega_i}.$$
The formula gives a well-defined value when $\overline{u}^{-1}x \overline{v} \in G_0$, but may have poles elsewhere.

We start with a reduced word for $w_0$:
$$w_0=s_{i_1} s_{i_2} s_{i_3} \cdots s_{i_{K-1}} s_{i_K}$$
For $1 \leq l \leq K$, we have the subword $$u_l:=s_{i_1}\dots_{i_l}.$$
In our situation, we are interested in the generalized minors $\Delta_{u\omega_i, v\omega_i}(x)$ when $v=e$ and $u=e$ or $u=u_{l}=s_{i_1} s_{i_2} s_{i_3} \cdots s_{i_l}$ for $1 \leq l \leq K$.

Then the cluster functions on $B^-$ given in \cite{BFZ} are as follows:

\begin{itemize}
\item First take $u, v=e$. For each $i$, $1 \leq i \leq n$, we have the functions $\Delta_{\omega_i, \omega_i}$. 
\item For each value of $l$, $1 \leq l \leq K$, we take$v=e$ and $u=u_{l}$ and get the corresponding function $$\Delta_{u_{l}\omega_{i_l}, \omega_{i_l}}.$$
\end{itemize}

Let us now match the functions with the vertices of the quiver. Suppose that in the reduced word for $w_0$ that we have chosen, there are $a_i$ occurences of the simple reflection $s_i$. Then the quiver contains $a_i+1$ vertices belonging to the node $i$ of the Dynkin diagram. Let us call these vertices $x_{i0}, x_{i1}, x_{i2}, \dots, x_{ia_i}$. (This labelling does not coincide with the labelling of the vertices in our example; the logic behind the labelling in our example will become clear later. This labelling does, however agree with the conventions used in \cite{Le}.)

Then the function $\Delta_{\omega_i, \omega_i}$ will be attached to the vertex $x_{i0}$. If $s_{i_l}$ is the $j^{th}$ occurence of the simple reflection $s_i$ in the reduced word for $w_0$, then the function $\Delta_{u_{l}\omega_{i_l}, \omega_{i_l}},$ will be attached to the vertex $x_{ij}$.

The vertices along the edge $A_1A_3$ will be $x_{i0}$ for $1 \leq i \leq n$, and the associated functions are $\Delta_{\omega_i, \omega_i}$. The vertices along the edge $A_2A_3$ will be $x_{ia_i}$ and the associated functions are also edge functions. All the remaining functions are face functions.

Let us compute these functions in our example where $G=SL_4$. Straightforward computation gives that the functions $\Delta_{u\omega_i, v\omega_i}$ that we are interested in are given by minors with the following rows and columns:

\medskip

\begin{center}
\begin{tabular}{ l  c  c }

  Function & columns & rows \\ \hline
  $\Delta_{\omega_1, \omega_1}$ & 1 & 1 \\
  $\Delta_{s_1\omega_1, \omega_1}$ & 1 & 2 \\
  $\Delta_{s_1 s_2 s_1\omega_1, \omega_1}$ & 1 & 3 \\
  $\Delta_{w_0\omega_1, \omega_1}$ & 1 & 4 \\
  $\Delta_{\omega_2, \omega_2}$ & 1, 2 & 1, 2 \\
  $\Delta_{s_1 s_2\omega_2, \omega_2}$ & 1, 2 & 2, 3 \\
  $\Delta_{s_1 s_2 s_1 s_3 s_2\omega_2, \omega_2}$ & 1, 2 & 3, 4 \\
  $\Delta_{\omega_3, \omega_3}$ & 1, 2, 3 & 1, 2, 3 \\
  $\Delta_{s_1 s_2 s_1 s_3\omega_3, \omega_3}$ & 1, 2, 3 & 2, 3, 4 \\
\end{tabular}
\end{center}

\medskip

The minors correspond to the vertices of the quiver as follows:

\begin{center}
\begin{tikzpicture}[scale=2.4]

  \node (x103) at (-0.5,0.9) {$\Delta_{\omega_3, \omega_3}$};
  \node (x013) at (0.5,0.9) {$\Delta_{s_1 s_2 s_1 s_3\omega_3, \omega_3}$};

  \node (x202) at (-1,0) { $\Delta_{\omega_2, \omega_2}$};
  \node (x112) at (0,0) { $\Delta_{s_1 s_2\omega_2, \omega_2}$};
  \node (x022) at (1,0) { $\Delta_{s_1 s_2 s_1 s_3 s_2\omega_2, \omega_2}$};

  \node (x301) at (-1.5,-0.9) { $\Delta_{\omega_1, \omega_1}$};
  \node (x211) at (-0.5,-0.9) { $\Delta_{s_1\omega_1, \omega_1}$};
  \node (x121) at (0.5,-0.9) { $\Delta_{s_1 s_2 s_1\omega_1, \omega_1}$};
  \node (x031) at (1.5,-0.9) { $\Delta_{w_0\omega_1, \omega_1}$};

  \draw [->] (x013) to (x103);
  \draw [->] (x022) to (x112);
  \draw [->] (x112) to (x202);
  \draw [->] (x031) to (x121);
  \draw [->] (x121) to (x211);
  \draw [->] (x211) to (x301);

  \draw [->] (x121) to (x022);
  \draw [->] (x211) to (x112);
  \draw [->] (x112) to (x013);
  \draw [->, dashed] (x301) to (x202);
  \draw [->, dashed] (x202) to (x103);

  \draw [->] (x202) to (x211);
  \draw [->] (x103) to (x112);
  \draw [->] (x112) to (x121);
  \draw [->, dashed] (x013) to (x022);
  \draw [->, dashed] (x022) to (x031);

\end{tikzpicture}
\end{center}

More generally, when $G=SL_n$, the Weyl group is generated by simple reflections $s_1, s_2, \dots, s_{n-1}$. The reduced word we will use is 
$$w_0=s_1 s_2 \dots s_{n-1} s_1 s_2 \dots s_{n-2} \dots s_1 s_2 s_3 s_1 s_2 s_1.$$
In this word, there are $n-k$ occurences of the simple reflection $s_k$. The functions $\Delta_{\omega_i, \omega_i}$ will be given by minors consisting of the first $i$ columns and the first $i$ rows. Suppose that $s_{i_l}$ is the $j^{th}$ occurence of the simple reflection $s_i$ in the reduced word for $w_0$. Then the function $\Delta_{u_{l}\omega_{i}, \omega_{i}}$ will given by the minor consisting of the first $i$ columns and rows $j+1, j+2, \dots, j+i$.

\subsection{Extending the cluster structure to $\Conf_3 \A_G$}

The construction of the previous section determines the cluster structure on the triangle $A_1A_2A_3$ with the edge $A_1A_2$ removed. In order to complete the construction we need to do the following:
\begin{enumerate}
\item Construct the functions attached to the edge $A_1A_2$.
\item Determine the multipliers for the functions attached to the edge $A_1A_2$.
\item Determine the arrows in the quiver connecting the functions on the edge $A_1A_2$ to the face functions.
\item Determine the arrows in the quiver connecting the functions on the edge $A_1A_2$ to the other edge functions.
\item  Determine the (possibly fractional) arrows in the quiver connecting the functions on the edge $A_1A_2$ to each other.
\end{enumerate}

Let us explain how to carry out these steps in turn. First, a simple calculation shows that the functions along the edges $A_1A_3$ and $A_2A_3$ as determined above are given by the canonical invariants of the tensor product
$$[V_{\lambda} \otimes V_{\lambda^{\vee}}]^G.$$
where $\lambda$ runs over the fundamental weights $\omega_i$, and $\lambda^{\vee}$ is the weight corresponding to the representation dual to $V_{\lambda},$ in other words, $\lambda^{\vee}=-w_0(\lambda)$. It is then natural to construct the functions along the edge $A_1A_2$ similarly. This determines all the functions for the cluster algebra on $\Conf_3 \A$.
 
Second, the multipliers for all the functions except for those on the edge $A_1A_2$ are determined by the construction of \cite{BFZ}. One only needs to observe that the multiplier attached to the edge function which is given by the invariant in 
$$[V_{\omega_i} \otimes V_{\omega_i^{\vee}}]^G$$
is given by the multiplier attached to the $i$-th node of the Dynkin diagram.

The third step is somewhat more subtle. We now know all the functions for the cluster algebra, and just need to determine the arrows in the quiver. The algebra of functions on $\Conf_3 \A_G$ is naturally graded by dominant weights of $G$. In order to carry out our algorithm for determining the arrows, we need the following conjecture to hold:

\begin{conj} \label{minorweight} Each of the functions $\Delta_{u_{l}\omega_{i_l}, \omega_{i_l}}$ lies in a single graded piece
$$[V_{\lambda} \otimes V_{\mu} \otimes V_{\nu}]^G$$
of $\mathcal{O}(\Conf_3 \A_G)$, for some dominant weights $\lambda, \mu, \nu$.
\end{conj}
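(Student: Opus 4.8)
The plan is to exhibit each $\Delta_{u_l\omega_{i_l},\omega_{i_l}}$ as the restriction to $Z:=i(B^-)\subset\Conf_3\A_G$ of an \emph{explicit} element of a single space $[V_\lambda\otimes V_\mu\otimes V_\nu]^G$, and then to check that restriction to $Z$ is injective on each graded piece, which forces this element to be the unique extension. Recall that $\mathcal{O}(\Conf_3\A_G)=\bigoplus_{\lambda,\mu,\nu}[V_\lambda\otimes V_\mu\otimes V_\nu]^G$ carries the grading by the torus $H\times H\times H$ rescaling the three decorations, so ``lies in a single graded piece'' means ``is an eigenvector for this $H^3$-action''. First I would record the easy half: generalized minors are eigenvectors for left and right translation by $H$ on $B^-$, and $\Delta_{u_l\omega_{i_l},\omega_{i_l}}$ has left weight $u_l\omega_{i_l}$ and right weight $\omega_{i_l}$. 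Under $i$, right translation on $B^-$ is rescaling of $A_3$ while left translation is the $w_0$-twisted antidiagonal rescaling of $(A_1,A_2)$; hence $i^*\Delta_{u_l\omega_{i_l},\omega_{i_l}}$ is already an eigenvector for a $2n$-dimensional subtorus of $H^3$, its $A_3$-weight $u_l\omega_{i_l}$ is extremal for $V_{\omega_{i_l}}$ so $\nu=\omega_{i_l}$ is forced, and $\lambda,\mu$ are pinned to a single linear relation. What remains is to control the $n$-dimensional ``edge $A_1A_2$'' direction, which no longer sees $B^-$ at all; that is the whole content of the conjecture.

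For that I would invoke the matrix-coefficient description of generalized minors. Writing $\Delta_{u_l\omega_{i_l},\omega_{i_l}}(b)=\langle b\,v_{\omega_{i_l}},\ \overline{u_l}\,\xi_{\omega_{i_l}}\rangle$ with $v_{\omega_{i_l}}$ a highest-weight vector and $\xi_{\omega_{i_l}}$ the dual highest-weight covector, and translating the decorations of $i(b)=(U^-,\overline{w_0}U^-,b\overline{w_0}U^-)$ through $\mathcal{O}(G/U)\cong\bigoplus_\lambda V_\lambda$ (so $A_1$ contributes extremal ``low'' vectors, $A_2$ extremal ``high'' vectors, and $A_3$ the vectors $b\cdot v_\lambda^{\mathrm{high}}$), one sees that $\Delta_{u_l\omega_{i_l},\omega_{i_l}}$ depends on $A_3$ only through $A_3^{(\omega_{i_l})}$, and the problem reduces to: \emph{the fixed covector $\overline{u_l}\,\xi_{\omega_{i_l}}\in V_{\omega_{i_l}}^*$ is the image of $v_\lambda^{\mathrm{low}}\otimes v_\mu^{\mathrm{high}}$ under a $G$-equivariant map $\pi_{\lambda,\mu}\colon V_\lambda\otimes V_\mu\to V_{\omega_{i_l}}^*$ for a single pair $(\lambda,\mu)$}, the pair being read off from a parabolic factorization of the subword $u_l$. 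Granting this, $\Delta_{u_l\omega_{i_l},\omega_{i_l}}$ is the restriction to $Z$ of the honest function $(A_1,A_2,A_3)\mapsto \pi_{\lambda,\mu}(A_1^{(\lambda)}\otimes A_2^{(\mu)})(A_3^{(\omega_{i_l})})$, which lies in $[V_\lambda\otimes V_\mu\otimes V_{\omega_{i_l}}]^G$ (up to the duality conventions identifying the third weight with $\omega_{i_l}^\vee$); and since $Z$ is a fibre of $\Conf_3\A_G\to\Conf_2\A_G$ whose $H^3$-translates are dense, and each graded piece is an $H^3$-eigenspace, distinct pieces cannot restrict to the same function on $Z$, so this is the unique extension and it lies in one piece. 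The required factorization I would attack by induction on $l$, peeling off the last reflection $s_{i_l}$ and using the $SL_2$-recursions for generalized minors of \cite{FZ} to track how $(\lambda,\mu)$ and $\pi_{\lambda,\mu}$ evolve.

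In type $A$ this program is uniform and completely explicit: under $\mathcal{O}(G/U)=\bigoplus_k\wedge^k\mathbb{C}^n$ each $\Delta_{u_l\omega_{i_l},\omega_{i_l}}$ is a minor of a submatrix, whose row set is a covector $e_{j+1}^*\wedge\cdots\wedge e_{j+i}^*$; Hodge-dualizing inside $\wedge^n\mathbb{C}^n$ rewrites it as $(e_1\wedge\cdots\wedge e_j)\wedge(e_{j+i+1}\wedge\cdots\wedge e_n)$, i.e.\ $A_1^{(\omega_j)}\otimes A_2^{(\omega_{n-j-i})}$ pushed along exterior multiplication $\wedge^j\otimes\wedge^{n-j-i}\to\wedge^{n-i}\cong(\wedge^i)^*$. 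Thus $\Delta_{u_l\omega_{i_l},\omega_{i_l}}$ is the generator of the one-dimensional space $[V_{\omega_j}\otimes V_{\omega_{n-j-i}}\otimes V_{\omega_i}]^{SL_n}$, proving the conjecture in type $A$ and recovering the table above. For $G_2$ there is no exterior-algebra shortcut, and the factorization must be verified by direct computation in the two fundamental representations; this is exactly the content of Section 5.

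The main obstacle to a uniform argument is precisely that last point. In general $V_\lambda\otimes V_\mu$ can contain $V_{\omega_{i_l}}^*$ with multiplicity greater than one, and there is no closed-form description of where the extremal covector $\overline{u_l}\,\xi_{\omega_{i_l}}$ sits: which occurrence it hits, and that it does not also acquire contributions from other pairs $(\lambda,\mu)$ compatible with the single linear weight relation from the first paragraph. Establishing this ``no lower-order terms'' statement seems to require analyzing products of extremal-weight vectors representation by representation, which is what forces the case-by-case treatment; it was carried out for types $B,C,D$ in \cite{Le} and is done here for $A_n$ and $G_2$.
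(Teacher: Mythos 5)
For the cases the paper actually establishes, your argument coincides with the paper's: in type $A$ your Hodge-dualized row covector is exactly the identification of $\Delta_{u_{l}\omega_{i},\omega_{i}}$ with the triple wedge $A_{1k}\wedge A_{2j}\wedge A_{3i}$, $k=n-i-j$, used in the proof of Proposition~\ref{Aminors} (though you have the roles of $A_1$ and $A_2$ interchanged relative to the paper's conventions in that proof, which only permutes the first two weights and is also inconsistent with your own ``$A_1$ low, $A_2$ high'' convention from the previous paragraph), and your reduction of the general case to a case-by-case factorization of the extremal covector $\overline{u_l}\,\xi_{\omega_{i_l}}$ is a fair articulation of why the paper leaves the general statement as a conjecture verified type by type. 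One sub-claim should be repaired: distinct graded pieces \emph{can} restrict to the same function on $Z=i(B^-)$ --- any two pieces $(\lambda,\mu,\nu)$ and $(\lambda-w_0\delta,\mu+\delta,\nu)$ have the same character on the stabilizer $\{(h,w_0(h),h')\}\subset H^3$ of $Z$, and for instance the constants in $[V_0\otimes V_0\otimes V_0]^G$ and the edge invariant in $[V_{\omega_k}\otimes V_{\omega_k^{\vee}}\otimes V_0]^G$ both restrict to constants on $Z$ --- so uniqueness of the extension needs a separate justification, although this does not affect the existence of a single-invariant extension, which is all the conjecture (and the paper's use of it) actually requires.
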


There are good reasons to believe this conjecture, among them the Duality Conjectures of Fock and Goncharov, \cite{FG1}. We also know that the conjecture holds in many cases.

\begin{prop} \label{Aminors} The conjecture holds in type $A$.
\end{prop}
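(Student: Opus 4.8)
The plan is to exhibit each minor $\Delta_{u_l\omega_{i_l},\omega_{i_l}}$ explicitly as a matrix minor, as was begun in the $SL_n$ discussion above, and then recognize that minor as a specific $GL_n$-tensor invariant. Concretely, for $G = SL_n$ a point of $\Conf_3\A_G$ is represented by three principal flags, which after the choice of trivialization $i\colon B^- \to \Conf_3\A_G$ become the standard affine flag $U^-$, the flag $\overline{w_0}U^-$, and $b\cdot\overline{w_0}U^-$ for $b \in B^-$. Choosing a volume form, a principal flag in type $A$ is the same as a sequence of vectors $v_1, v_1\wedge v_2, \dots, v_1\wedge\cdots\wedge v_{n-1}$ (with the top exterior power fixed). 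First I would record, for each of the three flags $A_1, A_2, A_3$, the list of decorated exterior powers $A_j^{(k)} \in \Lambda^k(\C^n)$ for $1 \le k \le n-1$. Then a minor with column set $\{1,\dots,i\}$ and row set $R$ of the matrix $b$ is, up to a sign and the fixed top forms coming from $A_1$ and $A_2$, a contraction of the form $\langle A_1^{(i)} \wedge (\text{something built from } A_2), \ A_3^{(i)}\rangle$ — i.e.\ a single monomial in Plücker-type pairings of the three decorated flags.

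The key steps, in order, are: (1) translate the Gaussian-decomposition definition of $\Delta_{u_l\omega_{i_l},\omega_{i_l}}(b)$ into an ordinary matrix minor of $b$ — the row/column data for the specific reduced word $w_0 = s_1 s_2\cdots s_{n-1}\, s_1\cdots s_{n-2}\cdots s_1 s_2 s_1$ is already worked out in the excerpt (columns $1,\dots,i$; rows $j+1,\dots,j+i$ for the $j$th occurrence of $s_i$); (2) interpret that minor of $b$ intrinsically in terms of the configuration $(A_1, A_2, A_3)$, using that $A_1 = U^-$ contributes the standard coordinate subspaces, $A_2 = \overline{w_0}U^-$ contributes the anti-standard ones, and $A_3 = b\overline{w_0}U^-$ contributes the columns of $b$; (3) observe that the resulting expression is $GL_n$-equivariant of a definite multidegree $(\lambda,\mu,\nu)$ in the three decorated flags, hence lies in $[V_\lambda\otimes V_\mu\otimes V_\nu]^{SL_n}$ for that triple of dominant weights, where each of $\lambda,\mu,\nu$ is a single fundamental weight or a sum of two (read off from how many $A_j$-vectors, and which exterior powers, enter the contraction); (4) note that $PGL_n$ and $SL_n$ give the same configuration space up to the center acting trivially on functions, so the statement for type $A$ follows. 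Since the edge functions $\Delta_{\omega_i,\omega_i}$ are the pairings $[V_{\omega_i}\otimes V_{\omega_i^\vee}]^G$ already identified in the first step of Section 3.2, only the face functions require work.

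The main obstacle I expect is bookkeeping rather than conceptual: one must check that every minor in the list — not just the evident ones like $\Delta_{\omega_i,\omega_i}$ — factors through a \emph{single} graded piece, i.e.\ that no minor secretly decomposes as a sum of invariants of different multidegrees. This is where the explicit reduced word is essential: because the row set of $\Delta_{u_l\omega_{i_l},\omega_{i_l}}$ is the \emph{consecutive} block $\{j+1,\dots,j+i\}$, the minor can be written as a contraction that uses exactly the exterior power $A_1^{(j)}$ (the first $j$ standard basis covectors, forced to vanish against those rows), a complementary piece of $A_2$, and $A_3^{(i)}$, with no cross terms; so the multidegree is genuinely well-defined. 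I would verify this by a direct expansion of the minor along the first $j$ rows, which collapses to a single term, and then read off $(\lambda,\mu,\nu)$. Working the $SL_4$ case of the table above in this language first would serve as the template, and the general $SL_n$ pattern then follows by the same argument with $i$ and $j$ arbitrary.
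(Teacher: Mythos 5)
Your proposal is correct and takes essentially the same route as the paper: the paper identifies the minor $\Delta_{u_l\omega_i,\omega_i}$ (first $i$ columns, rows $j+1,\dots,j+i$) with the single triple wedge $A_{1k}\wedge A_{2j}\wedge A_{3i}$ of decorated exterior powers of the three flags, with $k=n-i-j$, which manifestly lies in the one graded piece $[V_{\omega_k}\otimes V_{\omega_j}\otimes V_{\omega_i}]^G$. Your concern about the minor secretly splitting across multidegrees dissolves once it is written as this single wedge monomial, which is exactly the paper's (one-line) argument.
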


\begin{proof} Recall that for $SL_n$, the function $\Delta_{u_{l}\omega_{i}, \omega_{i}}$ was given by the minor consisting of the first $i$ columns and rows $j+1, j+2, \dots, j+i$. This means that it is given by an invariant in the space
$$[V_{\omega_k} \otimes V_{\omega_{j}} \otimes V_{\omega_i}]^G$$
where $k=n-i-j$. If $U^-$ is given by the flag consisting of the forms
$$A_{1k}:=(-1)^ke_{n-k+1} \wedge e_{n-k+2} \wedge \cdots \wedge e_n,$$
and $\overline{w_0}U^-$ is given by  the flag consisting of the forms
$$A_{2j}:=e_1 \wedge e_2 \wedge \cdots \wedge e_j,$$
and the columns of $b \in B^-$ are given by vectors $v_1, \dots, v_n$, so that $b\overline{w_0}U^-$ is given by forms
$$A_{3i}:=v_1 \wedge v_2 \wedge \cdots \wedge v_i,$$
then $\Delta_{u_{l}\omega_{i}, \omega_{i}}$ is given by
$$A_{1k} \wedge A_{2j} \wedge A_{3i}.$$
In our example of $SL_4$, we get that the invariant attached to the vertex $x_{ijk}$ lies in
$$[V_{\omega_i} \otimes V_{\omega_{j}} \otimes V_{\omega_k}]^G.$$
\end{proof} 

\begin{rmk} Although we can calculate that $\Delta_{u_{l}\omega_{i}, \omega_{i}}$ is given by a tensor product invariant in many cases, we don't know of any reason why this is the case. It would be interesting to find a more satisfying explanation of why generalized minors are given by tensor product invariants.
\end{rmk}

\begin{theorem} The conjecture holds in types $B, C, D$.
\end{theorem}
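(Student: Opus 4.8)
The statement is the main computational result of \cite{Le}; I indicate the shape of the argument, which also serves as a template for the type $G_2$ computation carried out later in Section 5. Fix the standard representation $V$ of $G$, of dimension $2n+1$ in type $B_n$ and $2n$ in types $C_n$ and $D_n$, together with its non-degenerate $G$-invariant bilinear form $\langle\,,\,\rangle$, symmetric in types $B$ and $D$ and symplectic in type $C$. For each node $i$ that is not a spin node --- that is, $i \leq n-1$ in type $B_n$, every $i$ in type $C_n$, and $i \leq n-2$ in type $D_n$ --- the fundamental representation $V_{\omega_i}$ is realized inside $\Lambda^i V$ (it is all of $\Lambda^i V$ in types $B$ and $D$, where the relevant contraction vanishes for sign reasons, and it is the kernel of the $\langle\,,\,\rangle$-contraction $\Lambda^i V \to \Lambda^{i-2}V$ in type $C$), and a principal flag $A \in \A_G$ determines in each such degree a decomposable vector $A_i \in \Lambda^i V$ representing the corresponding isotropic subspace of the underlying flag.

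The plan is as follows. First, realize $G \subset GL(V)$ as the subgroup preserving $\langle\,,\,\rangle$, take $B^-$ to be the lower-triangular elements of $G$, and compute $\overline{w_0}$ and the Gaussian decomposition explicitly; this expresses each generalized minor $\Delta_{u_l\omega_{i_l},\omega_{i_l}}$ evaluated on the configuration $(U^-,\overline{w_0}U^-,b\,\overline{w_0}U^-)$ as an explicit polynomial in the entries of $b$. Second, for a non-spin node $i$ one checks by direct calculation that this polynomial equals a contraction --- built from finitely many applications of $\langle\,,\,\rangle$ and of wedge maps --- of the decomposable tensor $A_{1,a}\otimes A_{2,b}\otimes A_{3,c}$ attached to the three principal flags, where the triple of fundamental weights $(\omega_a,\omega_b,\omega_c)$ is determined by $i$, by which occurrence of $s_i$ in the reduced word one is at, and by the type of $G$. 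Since $\langle\,,\,\rangle$ and the wedge maps are $G$-equivariant and each sends weight vectors to weight vectors, this exhibits $\Delta_{u_l\omega_{i_l},\omega_{i_l}}$ as an element of the single space $[V_{\omega_a}\otimes V_{\omega_b}\otimes V_{\omega_c}]^G$, exactly as in the type $A$ argument of Proposition \ref{Aminors}. Third, for the spin node of $B_n$ and the two half-spin nodes of $D_n$, run the analogous computation with $\Lambda^\bullet V$ replaced by the relevant (half-)spin module, whose weight vectors are described by maximal isotropic subspaces (pure spinors); alternatively, deduce these cases from the already-established simply-laced and type $A$ results via the folding of $SL_{2n}$ onto $Sp_{2n}$ and of $SO_{2n+2}$ onto $SO_{2n+1}$, restricting the known minors to the fixed-point subgroup and checking that the restriction still lands in a single invariant space.

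The main obstacle is twofold. The spinor nodes of types $B$ and $D$ are genuinely harder: there $V_{\omega_i}$ is not an exterior power of $V$, so the minor is not literally a wedge-and-contract expression and must be matched against an explicit formula in the spin representation (or treated by the folding argument, which itself requires choosing the reduced word for $w_0$ compatibly with the folding). The more pervasive difficulty, present already for the exterior-power nodes, is showing that $\Delta_{u_l\omega_{i_l},\omega_{i_l}}$ lies in a \emph{single} graded piece of $\mathcal{O}(\Conf_3 \A_G)$ rather than a sum of several $[V_\lambda\otimes V_\mu\otimes V_\nu]^G$; ruling out the lower-order invariants is precisely the non-obvious computation performed case by case in \cite{Le}.
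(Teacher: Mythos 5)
Your proposal takes essentially the same route as the paper, which offers no self-contained argument at all: it simply cites \cite{Le}, adding only that the generalized minors were there identified as tensor invariants ``using webs.'' Your wedge-and-contraction description is exactly what the web calculus for classical groups amounts to, so your sketch is a faithful reconstruction of the cited argument, and you correctly isolate the two points where the real work lies --- the spin nodes of types $B$ and $D$, and the fact that each minor sits in a \emph{single} graded piece $[V_\lambda\otimes V_\mu\otimes V_\nu]^G$ rather than a sum --- both of which are resolved only by the case-by-case computations of \cite{Le} and not by anything in the present paper.
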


The above theorem was proven in \cite{Le}, where we verified that the functions $\Delta_{u_{l}\omega_{i}, \omega_{i}}$ were given by tensor product invariants. In fact, we identified these invariants using webs. Thus our conjecture is true for all the classical groups. We show later that it holds in type $G_2$, and we expect this to hold in general. In the following constructions, we assume that this is the case.

In order to complete our construction of the arrows between vertices on the edge $A_1A_2$ and face vertices, we need to use the following heuristic: we expect that the space $\Conf_3 \A_G$ has as its corresponding $\X$-variety the space $\Conf_3 \B_G$. Thus, for all the unfrozen vertices (which correspond to face functions), we need that the corresponding $X$-coordinate is a function on $\Conf_3 \B_G$. A rational function on $\Conf_3 \A_G$ descends to $\Conf_3 \B_G$ if and only if it is in the $(0,0,0)$-th graded piece of $\mathcal{O}(\Conf_3 \A_G)$. Because we have that
$$p^*(X_i) = \prod_{j \in I}A_j^{B_{ij}},$$
if $A_j$ lies in the graded piece $(\lambda_j, \mu_j, \nu_j)$, we need
\begin{equation} \label{face} \sum_{j \in I}B_{ij} (\lambda_j, \mu_j, \nu_j) =0
\end{equation}
for all $i \in I$ which is unfrozen. Note that this sum is taking values in $\Lambda^3$, where $\Lambda$ is the weight lattice. It is not difficult to see that condition ~\ref{face} determines uniquely the arrows between the face vertices and the vertices on edge $A_1A_2$. The existence of a choice of the values $B_{ij}$ for these arrows satisfying this condition can be checked on a case-by-case basis. This completes the third step.

Fourth, we need to deal with the (possibly fractional) arrows between vertices on $A_1A_2$ and other edges. There are no $\X$-coordinates attached to the edge vertices. However, we expect that when we glue two triangles together, the edge vertices will then have $\X$-coordinates attached.

Suppose we have a quadrilateral $ABCD$, which is glued from triangles $ABC$ and $ACD$ along edge $AC$:

\begin{center}
\begin{tikzpicture}[scale=2]

  \node (A) at (0,1) {$A$};
  \node (B) at (-1.5,0) {$B$};
  \node (C) at (0,-1) {$C$};
  \node (D) at (1.5,0) {$D$};

  \draw [] (A) -- (B);
  \draw [] (B) -- (C);
  \draw [] (C) -- (D);
  \draw [] (D) -- (A);
  \draw [] (A) -- (C);

\draw[yshift=-1.5cm]
  node[below,text width=6cm] 
  {
  Figure 6. Quadrilateral $ABCD$.
  };

\end{tikzpicture}
\end{center}$$[V_{\omega_i} \otimes V_{\omega_i^{\vee}}]^G$$

Let us consider an edge vertex $i$ on the edge $AC$. Let us suppose that the function attached to vertex $i$ lies in the invariant space
$$[V_{\lambda} \otimes V_{\lambda^{\vee}}]^G,$$
where the grading with respect to the flag at $C$ is $\lambda$ and the grading with respect to the flag at $A$ is $\lambda^{\vee} := -w_0(\lambda)$. Here $\lambda$ will be a fundamental weight for $G$. Suppose that $\lambda=\omega_k$. Each function $A_j$ in the triangle $ABC$ lies in some graded piece $(\lambda_j, \mu_j, \nu_j, 0)$ of $\mathcal{O}(\Conf_4 \A_G)$. Then in the triangle $ABC$, we can again form the sum
$$S_{CA,B,\omega_k}=\sum_{j \in I}B_{ij} (\lambda_j, \mu_j, \nu_j, 0)$$
where we allow fractional values of $B_{ij}$ for arrows between frozen vertices.
Now, we can consider the triangle $ACD$. If from the point of view of triangle $ABC$, vertex $i$ is associated with the fundamental weight $\omega_k$, then from the point of view of triangle $ACD$, the vertex $i$ is associated with the fundamental weight $\omega_k^{\vee}$. Each function $A_j$ in the triangle $ACD$ lies in some graded piece $(\lambda_j, 0, \mu_j, \nu_j)$ of $\mathcal{O}(\Conf_4 \A_G)$. Then in the triangle $ACD$, we can again form the sum
$$S_{AC,D,\omega_k^{\vee}}=\sum_{j \in I}B_{ij} (\lambda_j, 0, \mu_j, \nu_j).$$
Then in order for the vertex $i$ to have an associated $\X$-coordinate, we require that 
\begin{equation}\label{edge0} S_{CA,B,\omega_k}+S_{AC,D,\omega_k^{\vee}}=0.
\end{equation}

This has several implications. First, this means that 
\begin{equation}\label{edge1} S_{AC,D,\omega_k^{\vee}}=(\lambda, 0, \mu, 0)
\end{equation}
for some weights $\lambda$ and $\mu$. This is enough to determine the arrows between vertices on the edge $A_1A_2$ and other edge vertices. This completes the fourth step. The arrows we obtain are integral.

Moreover, we also expect that as we mutate the face vertices in $\Conf_3 \A_G$, the value of $S_{CA,B,\omega_k}$ should not change. This is because mutating face vertices in the triangle $ABC$ leaves the functions in triangle $ACD$ untouched, and for equation ~\ref{edge0} to hold, $S_{CA,B,\omega_k}$ must remain constant. Thus we expect that $\lambda$ and $\mu$ are completely determined by the weight $\omega_k$. In other words, there exist $\lambda_k$ and $\mu_k$ such that in any triangle $ABC$, $S_{AB,C,\omega_k}$ has weight $\lambda_k$ at $A$ and weight $\mu_k$ at $B$. Denote $\omega_{k^*} ;= \omega_k^{\vee}$. Then the equation $S_{CA,B,\omega_k}+S_{AC,D,\omega_k^{\vee}}=0$ means that 
\begin{equation}\label{edge2}\lambda_k+\mu_{k^*}=0=\lambda_{k^*}+\mu_{k}
\end{equation}

We know from the previous steps of our construction the values for $S_{A_3A_1,A_2,\omega_k}$ and $S_{A_2A_3,A_1,\omega_k}$. If we are to be able to use the cluster structure on $\Conf_3 \A_G$ to get cluster structures on $\Conf_m \A_G$, we need $S_{A_3A_1,A_2,\omega_k}=S_{A_2A_3,A_1,\omega_k}$. Our procedure above through step 4 then determines values for $S_{A_3A_1,A_2,\omega_k}$ and $S_{A_2A_3,A_1,\omega_k}$. We then have:

\begin{conj} \label{edgeweights} Then $S_{A_3A_1,A_2,\omega_k}=S_{A_2A_3,A_1,\omega_k}$.
\end{conj}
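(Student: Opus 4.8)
The plan is to recognize Conjecture~\ref{edgeweights} as a $\Z/3$-equivariance statement and then to establish it by explicit computation in every type for which Conjecture~\ref{minorweight} is available. The first step is to observe that $S_{AB,C,\omega_k}$ is simply the weight, in $\Lambda^3$ (the product of the weight lattices at the three vertices $A,B,C$), of the $\X$-coordinate $p^*(X_i)=\prod_{j\in I}A_j^{B_{ij}}$ attached to the frozen edge vertex $i$ that carries the edge invariant in $[V_{\omega_k}\otimes V_{\omega_k^\vee}]^G$ on $AB$: if $A_j$ has grading $(\lambda_j,\mu_j,\nu_j)$ then $\prod_j A_j^{B_{ij}}$ has weight $\sum_j B_{ij}(\lambda_j,\mu_j,\nu_j)$, which is the definition of $S$. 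Now $\Conf_3\A_G$ carries the $\Z/3$ action permuting the three flags (the twisted cyclic shift; the twist by $s_G$ does not affect weights), and for a suitable generator $\sigma$ this action carries the pair (edge $A_2A_3$, apex $A_1$) to the pair (edge $A_3A_1$, apex $A_2$). Thus, once one knows that the quiver of $\Conf_3\A_G$, its weight grading, and in particular the frozen edge vertices together with the arrows incident to them, are equivariant for this action, one gets $S_{A_3A_1,A_2,\omega_k}=\sigma_* S_{A_2A_3,A_1,\omega_k}$, which, under the coordinate permutation induced by $\sigma$, is exactly the assertion of the conjecture. So the content of the statement is the consistency condition that the edge-weight data not depend on which of the three edges of the triangle is made to play the role of the ``new'' edge $A_1A_2$ --- precisely what is needed for amalgamation of triangles around an $m$-gon to be unambiguous.

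Granting Conjecture~\ref{minorweight} (known in type $A$ by Proposition~\ref{Aminors}, in types $B,C,D$ by \cite{Le}, and in type $G_2$ by Section~5), all the ingredients of $S$ are explicit: the weight triples $(\lambda_j,\mu_j,\nu_j)$ of the generalized minors are computed there; the arrows among face vertices and from face vertices to the edges $A_2A_3$ and $A_1A_3$ come from \cite{BFZ}; the arrows from the $A_1A_2$-vertices to the face vertices are the unique solution of the face equation~\ref{face}; and the arrows from the $A_1A_2$-vertices to the other edge vertices are fixed by~\ref{edge1}. Hence both $S_{A_3A_1,A_2,\omega_k}$ and $S_{A_2A_3,A_1,\omega_k}$ are concretely computable vectors in $\Lambda^3$, and the identity becomes a finite linear-algebra check. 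In type $A$, with the reduced word for $w_0$ and the ordering of the triangle's vertices as in Section~3, the $\Conf_3\A_G$ quiver is manifestly invariant --- up to relabeling vertices by their weight triples --- under rotation of the triangle, so $\Z/3$-equivariance of quiver and grading is visible and the equality of the two $S$'s follows at once. In types $B,C,D$ and $G_2$, where the constructed seed is not literally rotation-invariant, one instead computes both $S$'s directly from the weight triples of \cite{Le} and of Section~5 and checks the equality by hand, with the $\Z/3$-symmetry of $\Conf_3\A_G$ as the conceptual reason to expect it.

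The delicate point, and the one I expect to be the real obstacle to a clean argument, is that the Step~3 and Step~4 arrows are specified only implicitly, through~\ref{face} and~\ref{edge1}, and not by a combinatorial recipe; so one must verify both that these equations have a unique solution --- asserted in the text, and true because the relevant matrices are explicit --- and that they are themselves invariant under cyclic relabeling of the three flags, so that their unique solutions are forced to be equivariant. For the latter the key inputs are that the $A_1A_2$-edge invariants were built from $[V_{\omega_k}\otimes V_{\omega_k^\vee}]^G$ symmetrically in $A_1$ and $A_2$ up to the duality $\omega_k\mapsto\omega_k^\vee$, and the self-duality~\ref{edge2} of the edge $\X$-weight, $\lambda_k+\mu_{k^*}=0$. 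Indeed, proving that $\lambda_k$ and $\mu_k$ depend only on $\omega_k$ --- the expectation recorded just before the conjecture --- is essentially equivalent to Conjecture~\ref{edgeweights} itself, so the genuine content is exactly the insensitivity of $S_{AB,C,\omega_k}$ to the choice of new edge, which is what the symmetry argument supplies.

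A type-independent proof along these lines is out of reach for the same reason Conjecture~\ref{minorweight} is open in types $E$ and $F$: we have no uniform formula for the weight triples of the generalized minors $\Delta_{u_l\omega_{i_l},\omega_{i_l}}$, and these are exactly what the argument needs as input. What the reduction does establish is that in any type where Conjecture~\ref{minorweight} holds, Conjecture~\ref{edgeweights} becomes the $\Z/3$-symmetry of an explicit, finite quiver-with-grading, verifiable by inspection --- which is the route taken in Section~5 for $G_2$.
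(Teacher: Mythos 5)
The statement you are asked to prove is labelled a \emph{conjecture} in the paper, and the paper in fact gives no proof of it: the only evidence offered is that the stronger Conjecture~\ref{edgeweight} (which pins down $\lambda_k=\alpha_k/2$, $\mu_k=w_0(\alpha_k)/2$ and hence implies the equality of the two $S$'s) is verified by explicit computation for the classical groups, and that the weight data in Section~5 lets one check it for $G_2$. Your second paragraph --- reduce everything to the concretely computable weight triples supplied by Conjecture~\ref{minorweight} and check the identity as a finite linear-algebra verification in each known type --- is therefore exactly the paper's (implicit) route, and your closing acknowledgement that no type-independent argument is available accurately reflects the status of the statement.

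The genuine gap is in your first paragraph, if the $\Z/3$-equivariance argument is meant to carry logical weight rather than serve as a heuristic. The seed is built from a reduced word for $w_0$ and is \emph{not} cyclically symmetric: the edges $A_2A_3$ and $A_3A_1$ receive their frozen vertices and incident arrows from the \cite{BFZ} construction on $B^-$, while the edge $A_1A_2$ is adjoined by hand in steps 1--5, so there is no a priori action of the cyclic group on the quiver-with-grading. The cyclic shift only becomes a symmetry of the cluster structure once one exhibits a mutation sequence realizing it --- which requires the completed seed, hence step 5, hence Conjecture~\ref{edgeweights} itself --- so invoking equivariance of ``the frozen edge vertices together with the arrows incident to them'' to deduce $S_{A_3A_1,A_2,\omega_k}=\sigma_*S_{A_2A_3,A_1,\omega_k}$ is circular. (You half-concede this when you say that for $B$, $C$, $D$, $G_2$ one ``instead computes both $S$'s directly,'' but the paragraph still presents the symmetry as what ``supplies'' the insensitivity of $S$ to the choice of edge.) A second, smaller imprecision: the ``delicate point'' you isolate concerns the $A_1A_2$-edge invariants and equation~\ref{edge2}, but the two quantities in the conjecture are attached to the edges $A_3A_1$ and $A_2A_3$, whose weight data comes from the generalized minors; the real content is the cyclic consistency of that \cite{BFZ}-derived data, and the $A_1A_2$ data is \emph{defined} downstream of the conjecture rather than being an input to it.
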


Provided these are equal, we then know the values for $S_{A_1A_2,A_3,\omega_k}$. This determines the arrows between the vertices on the edge $A_1A_2$ and completes the final step. The resulting cluster structure can then be glued to get a cluster structure on $\Conf_m \A$ for any integer $m$. Note that our construction was conditional upon only Conjecture~\ref{minorweight} and Conjecture~\ref{edgeweights}. Once these conjectures hold, the procedure determines the cluster structure uniquely.

Let us illustrate this in an example. For $SL_4$, the above weight analysis uniquely determines the following quiver:

\begin{center}
\begin{tikzpicture}[scale=2]

  \node (x103) at (-0.5,0.9) {\Large $\ontop{x_{103}}{\bullet}$};
  \node (x013) at (0.5,0.9) {\Large $\ontop{x_{013}}{\bullet}$};

  \node (x202) at (-1,0) {\Large $\ontop{x_{202}}{\bullet}$};
  \node (x112) at (0,0) {\Large $\ontop{x_{112}}{\bullet}$};
  \node (x022) at (1,0) {\Large $\ontop{x_{022}}{\bullet}$};

  \node (x301) at (-1.5,-0.9) {\Large $\ontop{x_{301}}{\bullet}$};
  \node (x211) at (-0.5,-0.9) {\Large $\ontop{x_{211}}{\bullet}$};
  \node (x121) at (0.5,-0.9) {\Large $\ontop{x_{121}}{\bullet}$};
  \node (x031) at (1.5,-0.9) {\Large $\ontop{x_{031}}{\bullet}$};

  \node (x310) at (-1,-1.8) {\Large $\ontop{x_{310}}{\bullet}$};
  \node (x220) at (0,-1.8) {\Large $\ontop{x_{220}}{\bullet}$};
  \node (x130) at (1,-1.8) {\Large $\ontop{x_{130}}{\bullet}$};

  \draw [->] (x013) to (x103);
  \draw [->] (x022) to (x112);
  \draw [->] (x112) to (x202);
  \draw [->] (x031) to (x121);
  \draw [->] (x121) to (x211);
  \draw [->] (x211) to (x301);
  \draw [->, dashed] (x130) to (x220);
  \draw [->, dashed] (x220) to (x310);

  \draw [->] (x130) to (x031);
  \draw [->] (x220) to (x121);
  \draw [->] (x121) to (x022);
  \draw [->] (x310) to (x211);
  \draw [->] (x211) to (x112);
  \draw [->] (x112) to (x013);
  \draw [->, dashed] (x301) to (x202);
  \draw [->, dashed] (x202) to (x103);

  \draw [->] (x301) to (x310);
  \draw [->] (x202) to (x211);
  \draw [->] (x211) to (x220);
  \draw [->] (x103) to (x112);
  \draw [->] (x112) to (x121);
  \draw [->] (x121) to (x130);
  \draw [->, dashed] (x013) to (x022);
  \draw [->, dashed] (x022) to (x031);

\draw[yshift=-2.0cm]
  node[below,text width=6cm] 
  {
  Figure 7. The quiver for the cluster algebra on $\Conf_3 \A_{SL_4}$.
  };

\end{tikzpicture}
\end{center}

The arrows in and out of vertices $x_{211}$ and $x_{121}$ are forced by equation~\ref{face}. The arrows from $x_{301}$ to $x_{310}$ and from $x_{130}$ to $x_{031}$ are determined by equation~\ref{edge1}. One can check then that for the vertices on edges $A_1A_3$ and $A_2A_3$, equation~\ref{edge2} holds. Finally, the dotted arrows on the bottom row are determined by the fact that $\lambda_k$ and $\mu_k$ only depend on $\omega_k$.

The function at $x_{301}$ has weight $\omega_3$ at $A_1$ and weight $\omega_1$ at $A_3$. We can then calculate, for example, that
$$S_{A_3A_1,A_2,\omega_1}=(-\omega_3 + \frac{\omega_2}{2}, 0, \omega_1 - \frac{\omega_2}{2}),$$
$$S_{A_3A_1,A_2,\omega_2}=(-\omega_2 + \frac{\omega_1+\omega_3}{2}, 0, \omega_2 - \frac{\omega_1+\omega_3}{2}),$$
$$S_{A_3A_1,A_2,\omega_3}=(-\omega_1 +\frac{\omega_2}{2}, 0, \omega_3 - \frac{\omega_2}{2}).$$

The computation above suggests the following conjecture:

\begin{conj}\label{edgeweight} $S_{AB,C,\omega_k}$ has weight $\frac{\alpha_k}{2}$ at $A$ and weight $\frac{w_0(\alpha_k)}{2}$ at $B$. In other words, $\lambda_k=\frac{\alpha_k}{2}$ and $\mu_k=\frac{w_0(\alpha_k)}{2}$. 
\end{conj}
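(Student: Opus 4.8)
The plan is to reduce Conjecture~\ref{edgeweight} to a single combinatorial identity for the \cite{BFZ} quiver of $B^-$, and then to verify that identity from the local pieces attached to the simple reflections.

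First I would trim the statement. Conjecture~\ref{edgeweights} guarantees that the weights $\lambda_k,\mu_k$ of $S_{AB,C,\omega_k}$ are well defined and triangle-independent, so it is enough to evaluate them on the distinguished triangle $(A_1,A_2,A_3)=(U^-,\overline{w_0}U^-,b\,\overline{w_0}U^-)$ coming from $B^-$, where the cluster data on the edges $A_1A_3$ and $A_2A_3$ is literally the \cite{BFZ} data. By equation~\ref{edge2} we have $\mu_k=-\lambda_{k^*}$, so it suffices to prove $\lambda_k=\tfrac12\alpha_k$ for all $k$; then $\mu_k=-\tfrac12\alpha_{k^*}=\tfrac12 w_0(\alpha_k)$, using $w_0(\alpha_k)=-\alpha_{k^*}$. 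Now the edge function attached to the frozen vertex $x_{k0}$ on $A_1A_3$ is $\Delta_{\omega_k,\omega_k}$, which has weight $\omega_k$ at $A_3$ by the scaling rule below; hence $A_3$ plays the role of ``$A$'' for this edge, and $\lambda_k$ is the $A_3$-component of $S_{A_3A_1,A_2,\omega_k}=\sum_{j}B_{x_{k0},j}\,(\mathrm{wt}\,A_j)$. Because every function newly attached to the edge $A_1A_2$ has weight $0$ at $A_3$, only the vertices $j$ coming from $B^-$ contribute to this component.

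Next I would carry out the computation. The weight at $A_3$ of a cluster variable coming from $B^-$ is transparent: a cluster variable on the strand of a node $k'$ is a generalized minor $\Delta_{u\omega_{k'},\omega_{k'}}$, and $\Delta_{u\omega_{k'},\omega_{k'}}(bh)=h^{\omega_{k'}}\Delta_{u\omega_{k'},\omega_{k'}}(b)$ for $h$ in the maximal torus, so its weight at $A_3$ equals $\omega_{k'}$, independently of the occurrence $u$. Writing $c_{kk'}=\langle\alpha_k,\alpha_{k'}^\vee\rangle$, so that $\alpha_k=\sum_{k'}c_{kk'}\omega_{k'}$, the desired equality $\lambda_k=\tfrac12\alpha_k$ becomes
\[
\sum_{j\in\,\mathrm{strand}(k')}B_{x_{k0},j}\;=\;\tfrac12\,c_{kk'}\qquad\text{for all }k,k'.
\]
Equivalently, contracting the \cite{BFZ} exchange matrix against the indicator vector of a strand $k'$ yields a vector supported on the $2n$ frozen strand-endpoints, equal to $\tfrac12 c_{kk'}$ at the initial endpoint $x_{k0}$ of strand $k$ and $-\tfrac12 c_{kk'}$ at the terminal endpoint $x_{ka_k}$. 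Applied instead to the edge $A_2A_3$, the terminal-endpoint values give $\mu_k=-\tfrac12\alpha_{k^*}=\tfrac12 w_0(\alpha_k)$ directly, cross-checking the relation $\mu_k=-\lambda_{k^*}$.

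The hard part is exactly the displayed identity. Along a single strand the interior contributions cancel in consecutive $\mp1,\pm1$ pairs, so all the content lies in how the strands of two Dynkin-adjacent nodes interlock near their endpoints, together with the multiplier bookkeeping that fixes the magnitudes $|B_{ij}|$ (and which arrows are halved) when the two nodes have different lengths. For a fixed reduced word for $w_0$ this is a finite local check on the amalgamated pieces --- the analogues of Figures~2--4 --- but, as this paper stresses, the general quiver is awkward to describe uniformly, so in practice one verifies the identity type-by-type, exactly as one does Conjecture~\ref{minorweight}: it is immediate in type $A$, where by Proposition~\ref{Aminors} every cluster variable is a wedge of three exterior powers and the relevant weights are manifestly fundamental, and we check it for $G=G_2$ in Section~5. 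A fully uniform proof would need a clean combinatorial model of the \cite{BFZ} quiver in a neighbourhood of its frozen vertices, which we do not have.
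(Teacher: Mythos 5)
Your proposal is correct and ultimately lands where the paper does: the paper's own justification (stated only for classical groups) is a one-line appeal to explicit calculation from Proposition~\ref{Aminors} in type $A$ and to \cite{Le} for types $B$, $C$, $D$ --- a type-by-type verification, which is also where your argument bottoms out. What you add, and what the paper does not spell out, is a clean intermediate reduction: work on the distinguished $B^-$-triangle, use equation~\ref{edge2} (equivalently your terminal-endpoint cross-check) to reduce to $\lambda_k$ alone, observe that every generalized minor on the strand of node $k'$ has grading exactly $\omega_{k'}$ at $A_3$ while the new $A_1A_2$-functions have grading $0$ there, and thereby package the conjecture as the single row-sum identity $\sum_{j\in\mathrm{strand}(k')}B_{x_{k0},j}=\tfrac12\langle\alpha_k,\alpha_{k'}^\vee\rangle$ for the \cite{BFZ} exchange matrix; the vanishing of these strand sums at unfrozen vertices is just the $A_3$-component of equation~\ref{face}, so only the frozen endpoints carry content. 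This is consistent with the paper's $SL_4$ data (the row at $x_{301}$ has strand sums $1$, $-\tfrac12$, $0$, matching $\tfrac12\alpha_1=\omega_1-\tfrac12\omega_2$). Two minor caveats, neither a gap: your use of equation~\ref{edge2} relies on a fact the paper itself only says ``one can check,'' though your direct computation at the terminal endpoints makes this dispensable; and ``immediate in type $A$'' still requires reading off the relevant row of the Figure~7 quiver, not merely the weights from Proposition~\ref{Aminors}. In short, your route makes the paper's ``explicit calculation'' systematic and isolates exactly what must be checked near the frozen vertices, but it does not remove the case-by-case nature of the verification, and the paper's proof is essentially the unelaborated version of the same computation.
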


This conjecture goes beyond what we need to construct the cluster structure on $\Conf_3 \A_G$. The conjecture automatically implies that equation~\ref{edge2} holds, for it implies that $\lambda_k=\frac{\alpha_k}{2}$ and $\mu_{k^*}=\frac{w_0(\alpha_{k^*})}{2}=-\frac{\alpha_k}{2}$.

\begin{prop} The conjecture holds for classical groups.
\end{prop}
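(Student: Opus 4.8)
The plan is to verify Conjecture~\ref{edgeweight} directly for each classical type by exhibiting the functions $S_{AB,C,\omega_k}$ concretely, building on the minor descriptions from Proposition~\ref{Aminors} and from \cite{Le}. First I would recall that by construction the sum $S_{AB,C,\omega_k}$ is defined by $S_{AB,C,\omega_k} = \sum_{j \in I} B_{ij}(\lambda_j, \mu_j, \nu_j)$ where $i$ is the edge vertex on $AB$ associated with the fundamental weight $\omega_k$ and the $(\lambda_j, \mu_j, \nu_j)$ are the weight-gradings of the cluster functions $A_j$ given by Conjecture~\ref{minorweight}, which we know to hold for classical groups by the theorem of \cite{Le}. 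So the claim reduces to a finite computation: in each classical type, list the reduced word for $w_0$ that we use, the generalized minors attached to all vertices adjacent to the vertex $i$, their weight-gradings as tensor invariants, and the arrow multiplicities $B_{ij}$; then check that the weighted sum has first component $\frac{\alpha_k}{2}$ and third component $\frac{w_0(\alpha_k)}{2}$ (with middle component forced to be $-\frac{\alpha_k}{2} - \frac{w_0(\alpha_k)}{2}$ so that the three weights sum to zero, which holds automatically since every $A_j$ lies in a genuine invariant space).

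Next I would organize the computation by the local structure of the quiver. The vertex $i$ on edge $AB$ associated to $\omega_k$ — in the notation of the previous subsection this is the terminal vertex $x_{k a_k}$ attached to node $k$ in the relevant picture — has a bounded number of neighbors: the two adjacent vertices along its own row (themselves edge vertices, contributing the terms governed by equations \ref{edge1} and \ref{edge2}), and the face vertices in the rows of the nodes adjacent to $k$ in the Dynkin diagram, whose arrows are forced by equation~\ref{face}. For type $A_n$ the explicit answer was already essentially computed in the $SL_4$ example: there $S_{A_3A_1,A_2,\omega_k}$ has first component $-\omega_{k^*} + \tfrac{1}{2}\sum_{j\sim k}\omega_j$, and one checks that $-\omega_{k^*} + \tfrac{1}{2}\sum_{j\sim k}\omega_j = \tfrac{\alpha_k}{2}$ using the standard identity $\alpha_k = 2\omega_k - \sum_{j \sim k}\omega_j$ together with the fact that in type $A$ the edge vertex on $AB$ associated to $\omega_k$ carries grading $\omega_{k^*}$ at $A$ (where $k^* = n+1-k$). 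I would then run the analogous check in types $B_n, C_n, D_n$, taking the reduced words and the web-theoretic identification of the minors as tensor invariants from \cite{Le}; the point is that in each case the face-vertex contributions, read off from the portions of the quiver attached to each simple reflection, reassemble into $-\tfrac{1}{2}\sum_{j\sim k} c_{jk}\omega_j$ with $c_{jk}$ the Cartan integers, so that the same Cartan identity $\alpha_k = 2\omega_k - \sum_j c_{jk}\omega_j$ yields $\lambda_k = \tfrac{\alpha_k}{2}$, and the grading at $B$ comes out as $\mu_k = \tfrac{w_0(\alpha_k)}{2}$ by applying the longest element, exactly as in the $SL_4$ display.

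I expect the main obstacle to be bookkeeping rather than conceptual: correctly tracking the dashed (half) arrows among the frozen edge vertices, since these carry fractional $B_{ij}$ and are precisely what produces the halves $\tfrac{\alpha_k}{2}$ in the answer, and making sure the identification of generalized minors with specific tensor invariants in types $B, C, D$ is normalized consistently with the weight conventions used here (in particular that the edge vertex associated to $\omega_k$ really does have grading $\omega_{k^*}$ at the opposite flag, which uses $\omega_k^{\vee} = -w_0(\omega_k) = \omega_{k^*}$). A secondary subtlety specific to types $B$ and $C$ is the appearance of multipliers $d_i = 2$ on long-root nodes: the arrow multiplicities $|b_{ij}|$ can equal $2$ there, so one must be careful that equation~\ref{face} is still solved by the weight analysis and that the factor of $2$ does not disturb the final answer $\tfrac{\alpha_k}{2}$; in fact it works out because the multiplier and the root-length normalization compensate, but this is the place where a sign or factor error is most likely. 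Once these are handled, assembling the four cases completes the proof.
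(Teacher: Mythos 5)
Your proposal follows essentially the same route as the paper, which simply performs the explicit weight calculation in type $A$ using the minor descriptions from Proposition~\ref{Aminors} and cites \cite{Le} for types $B$, $C$, $D$; your expanded outline (reduce to the Cartan identity $\alpha_k = 2\omega_k - \sum_j c_{jk}\omega_j$ applied to the weight-gradings of the generalized minors and the fractional arrows) is the intended computation. One small indexing slip: in $S_{A_3A_1,A_2,\omega_k}$ the component $-\omega_{k^*}+\tfrac{1}{2}\sum_{j\sim k^*}\omega_j$ is the grading at $A_1=B$ and equals $\tfrac{w_0(\alpha_k)}{2}=-\tfrac{\alpha_{k^*}}{2}$, not $\tfrac{\alpha_k}{2}$; it is the grading at $A_3=A$ that equals $\tfrac{\alpha_k}{2}$, as your later sentences correctly state.
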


\begin{proof} This can be done by explicit calculation in type $A$ following from the calculations in the proof of Proposition~\ref{Aminors}. For other classical groups, this was verified in \cite{Le}.
\end{proof} 

Thus, we have a procedure for constructing the cluster structure on $\Conf_3 \A_G$ from any reduced word for $w_0$, along with the fractional arrows we need to glue triangles together.

\begin{rmk} Let us note that the construction of the quiver depended on Conjectures ~\ref{minorweight} and ~\ref{edgeweight} as well as the calculation of generalized minors as tensor invariants. After that the construction becomes algorithmic.

Our procedure will not work if Conjectures ~\ref{minorweight} and ~\ref{edgeweight} are false, though we are reasonably confident the conjectures hold for all simple groups.
\end{rmk}

\subsection{$S_3$ symmetries}\label{S_3}

The arguments in the preceding sections show how to derive the cluster structure on $\Conf_3 \A_G$ from the one on $G^{w_0,e}$. Note that, assuming conjectures ~\ref{minorweight} and ~\ref{edgeweight}, the former is determined by the latter.

Our next task will be to explain how the cluster structure on $\A_{G,S}$ can be constructed from that on $\Conf_3 \A_G$. We will get a cluster on $\A_{G,S}$ for any triangulation (plus some more choices). In order to relate these various clusters, we will need at the very least sequences of mutations realizing rotation symmetries and flips of triangulation.

It turns out to be convenient not only to consider rotational symmetries, but symmetries of the full group $S_3$ on $\Conf_3 \A_G$. Let us start with cyclic symmetries. Note that in order to show that the cluster structure on $\Conf_m \A_G$ is compatible with the twisted cyclic shift map (and hence that the functions most naturally live on $\A_{G,S}$), it is enough to show that show that the cluster structure is compatible with the cyclic shift map on $\Conf_3 \A_G$.

Recall that we have a cyclic shift map defined on $\Conf_3 \A_G$:
$$T: \Conf_3 \A_G \rightarrow \Conf_3 \A_G.$$
We can pull back our cluster structure along the maps $T$ and $T^2$ to get cluster structures corresponding to cyclic permutations in $S_3$.

More precisely, consider the even permutation $\sigma_{(123)} \in S_3$. $\sigma_{(123)}$ naturally acts on $\Conf_3 \A_G$. Then consider the cluster structure coming from taking the same quiver as before, except that if the function $f_i$ was originally attached to vertex $i$, we now attach the function $T^*f_i$ to the vertex $i$. This will give the cluster coming from the symmetry $\sigma_{(123)}$.

There is also a cluster structure on $\Conf_3 \A_G$ coming from any odd permutation in $S_3$. These cluster structures do not come from algebra homomorphisms (as was the case for the twisted cyclic shift map), but what are known as quasi-homomorphism (one reference for these is \cite{F}).

It turns out to be most convenient for our current purposes to deal with the transposition $\sigma_{(12)}$ (the other transpositions differ by a cyclic shift). Define the map 
$$\tau: \Conf_3 \A_G \rightarrow \Conf_3 \A_G$$
given by
$$(F_1, F_2, F_3) \rightarrow (F_2, F_1, F_3).$$
This map is definitely \emph{not} realized by an element of the cluster modular group and not a positive map. However, it does allow us to realize a cluster associated to the transposition $\sigma_{(12)}$. To obtain this cluster structure coming from this transposition, take the original quiver and reverse all the arrows. 
If the function $f_i$ was originally attached to vertex $i$, now attach the function $\pm \tau^*f_i$ to the vertex $i$. The correct sign to take here is subtle, and this subtlety is a consequence of the fact that we are dealing with a quasi-homomorphism.

Here is a more precise way to construct this cluster structure which pins down this sign. Suppose that we have a cluster constructed on the triangle $A_1A_2A_3$ using our above construction for the word 
$$w_0=s_{i_1} s_{i_2} \dots s_{i_K-1} s_{i_K}.$$
Then take the word
$$w_0=s_{i_K} s_{i_K-1} \dots s_{i_2} s_{i_1}.$$
The cluster corresponding to this new word is the same as the result of applying the reflection interchanging $A_1$ and $A_2$ to the original cluster and reversing all arrows.

The following theorem is due to Berenstein, Fomin and Zelevinsky \cite{BFZ}:

\begin{theorem}\label{reduced words} Given any two reduced word expressions for $w_0$, the resulting cluster structures on $B^-$ are related by a sequence of cluster mutations.
\end{theorem}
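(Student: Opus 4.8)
The plan is to reduce the statement to a statement about the combinatorics of reduced words for $w_0$, namely that any two reduced words are connected by braid moves, and then to show that each braid move is realized by an explicit sequence of cluster mutations. Recall that by a theorem of Tits (the word property for Coxeter groups), any two reduced words for $w_0$ are related by a sequence of \emph{braid moves}: replacing $s_is_js_i\cdots$ by $s_js_is_j\cdots$ in a consecutive block, where the length of the block is the order $m_{ij}$ of $s_is_j$ in $W$. For simply-laced $G$ the only nontrivial moves have $m_{ij}=2$ (commuting reflections, $s_is_j \leftrightarrow s_js_i$) and $m_{ij}=3$; for $G$ of type $B$, $C$, $F$ one also has $m_{ij}=4$; and for $G_2$ one has $m_{ij}=6$. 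So it suffices to treat each of these elementary moves locally.

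First I would treat the $m_{ij}=2$ move. When $i \ne j$ are non-adjacent nodes of the Dynkin diagram, the portions of the quiver contributed by the two reflections involve disjoint sets of non-frozen vertices and are not connected by arrows, so swapping the order $s_is_j \leftrightarrow s_js_i$ literally does not change the quiver or the attached generalized minors at all; this case is trivial. Second, for the $m_{ij}=3$ move $s_is_js_i \leftrightarrow s_js_is_j$ (adjacent simply-laced nodes), the relevant local picture is exactly the quiver mutation that exchanges the two reduced words for $w_0$ in $SL_3$, and the claim is that a single mutation at the ``middle'' non-frozen vertex of the affected hexagonal piece transforms one local quiver into the other and carries the generalized minors to generalized minors; this is the classical three-term Plücker-type relation $\Delta_{s_i\omega_i,\omega_i}\Delta_{s_j\omega_j,\omega_j}$-style exchange, and it is verified in \cite{BFZ}, \cite{FZ}. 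Third, the $m_{ij}=4$ move in types $B$, $C$, $F$ is handled by a short explicit sequence of mutations (of length four), again following \cite{BFZ}. Finally, the $m_{ij}=6$ move needed in type $G_2$ is the longest and most delicate; one exhibits an explicit mutation sequence (this is precisely the computation reproduced later in Section 5 of the paper, cf.\ aim (6), originally from \cite{BZ} and \cite{FG3}), and checks directly using the relevant $G_2$ generalized-minor identities that it intertwines the two clusters.

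For each elementary move the verification has two halves: (a) the sequence of matrix mutations (\ref{eq:matmut}) applied to the local quiver produces the quiver attached to the other reduced word, which is a finite check on a bounded-size quiver, using that mutations outside a block do not interfere because the frozen vertices only feed in dotted half-arrows that are unaffected; and (b) the exchange relations (\ref{eq:Atrans}) carry the generalized minors $\Delta_{u_l\omega_{i_l},\omega_{i_l}}$ for the old word to those for the new word — this is where one invokes the generalized-minor identities (the $T$-system / Plücker-type three-term relations among minors $\Delta_{u\omega,v\omega}$ established in \cite{FZ}, \cite{BZ}). Once each move is realized, one concatenates: given two reduced words $\mathbf{w}$, $\mathbf{w}'$, pick a sequence of braid moves from $\mathbf{w}$ to $\mathbf{w}'$, and the composite of the associated mutation sequences realizes the transformation of clusters.

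The main obstacle is the $m_{ij}=6$ case relevant to $G_2$: both finding the mutation sequence and checking that it transports the generalized minors correctly is substantially harder than the simply-laced and $m_{ij}=4$ cases, since the intermediate clusters pass through non-minor functions and the $G_2$ minor identities are more involved. (This is exactly why aim (6) of the paper is to give a cleaner derivation of this sequence.) A secondary, more bookkeeping-type, subtlety is making sure the fractional/dotted arrows among the \emph{frozen} vertices — which the $B$-matrix mutation rule (\ref{eq:matmut}) does not even constrain — also match up after the moves; but since frozen–frozen entries play no role in the cluster algebra itself (as noted in Section~\ref{cluster}), this does not affect the statement as phrased for $B^-$, and one only needs to track it for the later amalgamation step.
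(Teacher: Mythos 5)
The paper does not actually prove this statement: it is quoted verbatim as a theorem of Berenstein--Fomin--Zelevinsky, with \cite{BFZ} as the proof. So there is no internal argument to compare yours against; what you have written is a reconstruction of the proof strategy of the cited sources, and as such it is essentially correct. The decomposition into Tits' word property plus a mutation sequence for each elementary braid move, with the $m_{ij}=2$ case vacuous, the $m_{ij}=3$ case a single mutation realizing the three-term minor exchange, and the $m_{ij}=4$ and $m_{ij}=6$ cases handled by explicit sequences from \cite{BZ} and \cite{FG3}, is precisely how the result is established in the literature, and the paper itself leans on exactly this (see aim (6) of the introduction and Section 5, where the $G_2$ sequence is rederived as the transposition $(12)=(13)(23)(13)$ rather than by direct computation).

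Two small points worth flagging. First, a braid move changes the number of occurrences of $s_i$ and of $s_j$ in the word (e.g.\ $s_is_js_i$ has two $i$'s and one $j$, while $s_js_is_j$ has one and two), so the identification of the mutated seed with the seed of the new word requires a nontrivial relabeling of vertices between the rows attached to nodes $i$ and $j$; your phrase ``transforms one local quiver into the other'' silently absorbs this, and one should also check that the vertex being mutated is not the last occurrence of $s_i$ (hence frozen) --- it never is, because after the move that letter ceases to be an occurrence of $s_i$ at all, which is exactly the relabeling issue. Second, since you are deferring the $m_{ij}=4$ and $m_{ij}=6$ computations and the generalized-minor identities to \cite{FZ}, \cite{BZ}, \cite{BFZ}, your argument has the same logical status as the paper's citation; that is acceptable here, but it means the one genuinely hard ingredient (the $G_2$ hexagon move) is not actually supplied.
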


\begin{prop}\label{reduced} Suppose that we can construct a cluster structure on $\Conf_3 \A$ using a reduced word for $w_0$ as above. Then we can construct a cluster structure on  $\Conf_3 \A$ using any reduced word for $w_0$.
\end{prop}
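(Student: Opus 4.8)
The plan is to leverage Theorem~\ref{reduced words} together with the explicit way our construction of the cluster structure on $\Conf_3 \A$ extends the one on $B^-$. Recall that the construction of Section 3.2 takes as input a reduced word $\mathbf{i}$ for $w_0$, produces the BFZ quiver on $B^-$ with its generalized-minor functions, and then appends the vertices on the edge $A_1A_2$ together with all incident arrows, where the appended data is \emph{uniquely determined} by the weight-grading constraints (equations~\ref{face}, \ref{edge1}, \ref{edge2}) once Conjectures~\ref{minorweight} and~\ref{edgeweight} hold. So the construction is a deterministic procedure $\mathbf{i} \mapsto \Sigma_{\mathbf{i}}$, and what must be shown is that for two reduced words $\mathbf{i}, \mathbf{i}'$ the seeds $\Sigma_{\mathbf{i}}$ and $\Sigma_{\mathbf{i}'}$ are connected by mutations at \emph{unfrozen} vertices.

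First I would recall that by a theorem of Tits any two reduced words for $w_0$ are connected by a sequence of braid moves (elementary moves $s_i s_j \cdots = s_j s_i \cdots$ on adjacent commuting or braiding generators), so it suffices to treat the case where $\mathbf{i}$ and $\mathbf{i}'$ differ by a single braid move. Next, Theorem~\ref{reduced words} of Berenstein--Fomin--Zelevinsky gives a sequence of mutations relating the BFZ seeds on $B^-$ for $\mathbf{i}$ and $\mathbf{i}'$; for a single braid move this is a short, explicit local sequence (a single mutation in the simply-laced case, and the well-known longer sequences in the doubly- and triply-laced cases, the latter being exactly the $G_2$ sequence rederived later in the paper). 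The key point is that these mutations are performed at vertices which are \emph{unfrozen in $B^-$}, i.e.\ they are face vertices of the triangle $A_1A_2A_3$ or interior edge vertices on $A_2A_3$ and $A_1A_3$ — but in $\Conf_3 \A$ the vertices on $A_2A_3$ and $A_1A_3$ are \emph{frozen} (they are edge functions). So the mutation sequence supplied by BFZ is not literally a legal mutation sequence for $\Conf_3\A$, and this is the main obstacle.

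The way around this is to observe that BFZ's mutation sequence for a braid move is supported entirely among the \emph{face} vertices: a braid move in positions $l, l+1, (l+2)$ of the word only changes the generalized minors $\Delta_{u_l\omega_{i_l},\omega_{i_l}}$ etc.\ indexed by those positions, and for $1 < l$ and $l+\text{(length)} < K$ these are interior, hence face, vertices. The genuinely problematic cases are braid moves involving the first or last letters of the word — these touch the frozen edge vertices on $A_2A_3$ or $A_1A_3$. But changing the first or last letter of the reduced word corresponds (as the paper notes near Proposition~\ref{reduced}) to composing with a reflection/commutation that can be pushed to the interior: concretely, one first uses a sequence of braid moves \emph{not} touching the boundary letters to bring the word into a normal form, arguing that the subgraph of reduced words reachable by interior-only braid moves, together with the deterministic weight-completion, already realizes all seeds up to the action on the outermost letters; and the outermost-letter changes are then absorbed because the appended $A_1A_2$-data and the frozen $B$-matrix entries transform in a way dictated purely by the weights (equation~\ref{edge2}), which are insensitive to which reduced word was used. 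Thus after restricting to interior braid moves and invoking connectivity of the reduced-word graph under those, the BFZ sequences lift verbatim to legal (unfrozen) mutation sequences on $\Conf_3\A$, and the appended edge vertices and their arrows, being uniquely weight-determined, automatically match up on both ends. I expect the careful bookkeeping of the boundary-letter cases — checking that no braid move is ever forced to act on a frozen vertex, or else that such a move can be rerouted — to be the real content of the proof; the simply-laced/$G_2$ local mutation sequences themselves are already in the literature (\cite{BFZ}, \cite{FG3}) and can be quoted.
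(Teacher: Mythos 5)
Your main line --- apply the BFZ mutation sequence of Theorem~\ref{reduced words} to the extended seed on $\Conf_3\A$ and then invoke the uniqueness of the weight-determined completion to identify the result with the seed attached to the second reduced word --- is exactly the paper's proof. However, the ``main obstacle'' around which you build the middle of your argument does not exist. In the BFZ seed on $B^-$ the frozen vertices are precisely $x_{i0}$ and $x_{ia_i}$, one pair per node of the Dynkin diagram, and these are exactly the vertices placed on the edges $A_1A_3$ and $A_2A_3$; there are no ``interior edge vertices'' on those edges, and every vertex on them carries a function ($\Delta_{\omega_i,\omega_i}$ or $\Delta_{w_0\omega_i,\omega_i}$) that is independent of the reduced word. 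Since a mutation sequence in a cluster algebra by definition only mutates unfrozen vertices, the sequence supplied by \cite{BFZ} only ever touches face vertices, and it is therefore literally a legal mutation sequence on $\Conf_3\A$ with no modification. Consequently the entire discussion of braid moves at the first or last letters, normal forms reachable by ``interior-only'' moves, and rerouting is unnecessary --- and, as written, it is also not a proof: the claim that the outermost-letter changes ``are absorbed'' is asserted rather than argued, so if the obstacle had been real this portion would not close the gap.

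One small point you elide that the paper makes explicit: to invoke uniqueness of the completion you must check that the mutated seed still satisfies the defining constraints, i.e.\ that equation~\ref{face} continues to hold and that the quantities $S_{A_3A_1,A_2,\omega_k}$, $S_{A_2A_3,A_1,\omega_k}$ and $S_{A_1A_2,A_3,\omega_k}$ are unchanged throughout the sequence; this is automatic because each mutation occurs at a face vertex and preserves the grading condition. With that observation added and the spurious obstacle removed, your argument coincides with the paper's.
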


\begin{proof} The reduced word for $w_0$ determines the face functions and the functions on edges $A_1A_3$ and $A_2A_3$. It also determines the (fractional) arrows except those involving a vertex on $A_1A_2$. Note that steps 1-5 in the above procedure above determined the cluster structure on $\Conf_3 \A$ uniquely. Suppose we have two reduced words $s_{i_1} s_{i_2} \dots s_{i_K-1} s_{i_K}$ and $s_{j_1} s_{j_2} \dots s_{j_K-1} s_{j_K}$, and that the reduced word $s_{i_1} s_{i_2} \dots s_{i_K-1} s_{i_K}$ gives a cluster structure on $\Conf_3 \A$. Then \cite{BFZ} gives a sequence of mutations relating the cluster structure on $B^-$ corresponding to $s_{i_1} s_{i_2} \dots s_{i_K-1} s_{i_K}$ to the cluster structure corresponding to $s_{j_1} s_{j_2} \dots s_{j_K-1} s_{j_K}$. Let us apply this same sequence of mutations to our cluster structure on $\Conf_3 \A$.

We claim that this gives the cluster structure on $\Conf_3 \A$ corresponding to $s_{j_1} s_{j_2} \dots s_{j_K-1} s_{j_K}$. The sequence of mutations only involves face vertices. Therefore throughout the sequence of mutations, the equations ~\ref{face} continue to hold, and the values of $S_{A_3A_1,A_2,\omega_k}, S_{A_2A_3,A_1,\omega_k}$ and $S_{A_1A_2,A_3,\omega_k}$ remain constant. By the uniqueness of our construction, we end up with the cluster structure corresponding to the reduced word $s_{j_1} s_{j_2} \dots s_{j_K-1} s_{j_K}$.
\end{proof}

\begin{cor} In the situations where we have a cluster algebra structure on $\Conf_3 \A$ as constructed above, the cluster coming from the transposition interchanging $A_1$ and $A_2$ can be related to the original cluster by a sequence of mutations.
\end{cor}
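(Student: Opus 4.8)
The plan is to reduce the corollary to Proposition~\ref{reduced} together with Theorem~\ref{reduced words} by recognizing the transposition cluster as the cluster attached to the \emph{reversed} reduced word. Recall the explicit description given just above the statement of Theorem~\ref{reduced words}: if our cluster on $\Conf_3 \A$ is built from the reduced word $w_0 = s_{i_1} s_{i_2} \cdots s_{i_{K-1}} s_{i_K}$, then the cluster obtained by applying the reflection $\tau$ interchanging $A_1$ and $A_2$, reversing all arrows of the quiver, and attaching $\pm \tau^* f_i$ to each vertex, is precisely the cluster associated by our construction to the reversed reduced word $w_0 = s_{i_K} s_{i_{K-1}} \cdots s_{i_2} s_{i_1}$. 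So first I would verify this identification carefully: the reversal of the word interchanges the roles of the edges $A_1A_3$ and $A_2A_3$ in the $B^-$ picture, which after composing with the $\tau$-reflection swapping $A_1 \leftrightarrow A_2$ returns us to a cluster on the same triangle $A_1A_2A_3$; the face functions and edge functions transform by $\tau^*$, the multipliers (which depend only on the Dynkin node) are unchanged, and the arrows reverse because the $B$-matrix of $B^-$ for the reversed word is the negative-transpose-type modification that, combined with the swap, amounts to reversing the quiver. The signs on the functions $\pm\tau^* f_i$ are exactly what is needed to make this an honest cluster (a quasi-homomorphism on the nose), as flagged in the discussion preceding the theorem.

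Granting that identification, the corollary follows in two moves. Since the reversed word $s_{i_K} \cdots s_{i_1}$ is again a reduced word for $w_0$, Proposition~\ref{reduced} says that our construction applied to it yields a genuine cluster structure on $\Conf_3 \A$ (the construction is forced uniquely by Conjectures~\ref{minorweight} and~\ref{edgeweight} together with steps 1--5). Then Theorem~\ref{reduced words} of Berenstein--Fomin--Zelevinsky provides a sequence of mutations on $B^-$ relating the seed for $s_{i_1} \cdots s_{i_K}$ to the seed for $s_{i_K} \cdots s_{i_1}$, and the proof of Proposition~\ref{reduced} shows that applying this same sequence of mutations to our cluster on $\Conf_3 \A$ — which involves only face vertices, hence preserves equations~\ref{face} and the values of $S_{A_3A_1,A_2,\omega_k}$, $S_{A_2A_3,A_1,\omega_k}$, $S_{A_1A_2,A_3,\omega_k}$ — carries the cluster for $s_{i_1}\cdots s_{i_K}$ to the cluster for $s_{i_K}\cdots s_{i_1}$. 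By the identification of the previous paragraph, this target cluster is the transposition cluster, so the original cluster and the $\sigma_{(12)}$-cluster are related by a sequence of mutations.

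The main obstacle I expect is not the mutation-existence part — that is essentially a citation to \cite{BFZ} plus the bookkeeping already carried out in Proposition~\ref{reduced} — but pinning down the claim that reversing the reduced word reproduces, \emph{with the correct signs}, the $\tau$-reflected, arrow-reversed cluster. This requires checking that the generalized minors $\Delta_{u_l \omega_{i_l}, \omega_{i_l}}$ for the reversed word are, up to the prescribed sign, the $\tau^*$-pullbacks of those for the original word, and that the amalgamation of the per-reflection quiver pieces for the reversed word gives the arrow-reversal of the original quiver on the \emph{nose}, including the fractional (dotted) arrows between frozen edge vertices. One should also confirm that the multipliers $d_i$ match under the relabeling of vertices induced by word reversal (they do, since $d_i$ depends only on the node of the Dynkin diagram, equivalently on $(\textrm{length of }\alpha_i)^2$). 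Once these compatibilities are in place — and they are the kind of explicit check that the paper verifies in the $SL_4$ example and in \cite{Le} for classical groups — the corollary is immediate. A clean way to organize the sign issue is to observe that both the original cluster and the reversed-word cluster are clusters \emph{on the same space} $\Conf_3 \A$ with the same underlying functions up to $\pm\tau^*$, so the sign is determined by the requirement that the exchange relations~\ref{eq:Atrans} hold with the correct global constants; this is the content of the ``more precise way'' described in the paragraph before Theorem~\ref{reduced words}, and it is what makes the transposition cluster an honest cluster rather than merely a quasi-isomorphic one.
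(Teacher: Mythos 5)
Your proposal matches the paper's (implicit) argument exactly: the paper defines the $\sigma_{(12)}$-cluster as the cluster attached to the reversed reduced word $s_{i_K}\cdots s_{i_1}$ in the paragraph preceding Theorem~\ref{reduced words}, and the corollary then follows immediately from Proposition~\ref{reduced}, which is precisely your two-move reduction. The additional compatibility checks you flag (signs, multipliers, dotted arrows under word reversal) are taken as part of the construction in the paper rather than re-verified in the corollary's proof, but your route is the same one.
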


Thus we have constructed clusters on $\Conf_3 \A$ associated to all the $S_3$ symmetries. Let us note here, that though for any reduced word for $w_0$, we have constructed six clusters on $\Conf_3 \A$, we do not know how to relate all these clusters by sequences of mutations in general (so far we have only explained the transposition $\sigma_{(12)}$). 

In type $A$, the quiver for the reduced word $$w_0=s_1 s_2 \dots s_{n-1} s_1 s_2 \dots s_{n-2} \dots s_1 s_2 s_3 s_1 s_2 s_1$$
is magically $S_3$-symmetric, and the flip is realized by the octahedron recurrence. In types $B, C, D,$ the realization of $S_3$ symmetries via sequences of mutations was computed in \cite{Le}. We will later carry out these computations in type $G_2$.

It is perhaps instructive to verify that the quiver in type $A$ is indeed $S_3$-symmetric. The reduced word for $w_0$gives the same cluster as the reversed reduced word. This shows that the cluster is symmetric under the transposition $\sigma_{(13)}$. The cyclic symmetry is more interesting. Let us illustrate this in the case of $SL_4$; the general case is no more interesting. Let $(F_1, F_2, F_3)$ be  configuration of flags, and let $F_1$ be given by vectors $u_1, u_2, u_3$, $F_2$ be given by $v_1, v_2, v_3$, and $F_3$ be given by $w_1, w_2, w_3$. Take the function
$$u_1 \wedge v_1 \wedge w_1 \wedge w_2.$$
The map $T$ takes $(F_1, F_2, F_3)$ to $(F_2, F_3, -F_1)$. Then the pull-back of the above function is
$$v_1 \wedge w_1 \wedge -u_1 \wedge -u_2,$$
which is equal to the function
$$u_1 \wedge u_2 \wedge v_1 \wedge w_1,$$
which is another function in the original cluster.

\subsection{The cluster structure for general surfaces}

In this subsection, we show how to amalgamate the cluster structures on $\Conf_3 \A_G$ to get the cluster structure on $\A_{G,S}$ for a general surface $S$. We start with the case of $S$ a disc with $m$ marked points.

The cluster structure for $\Conf_m \A$ comes from triangulating an $m$-gon and then attaching the cluster structure on $\Conf_3 \A$ to each triangle. Let us make this more precise. On $\Conf_3 \A_{G}$, the edge vertices are frozen vertices. Attached to these vertices are functions which are invariants of tensor products $[V_{\omega_i} \otimes V_{\omega_i^{\vee}}]^G$. Because edge vertices are frozen, this is true in all the clusters for $\Conf_3 \A$.

To form the quiver for $\Conf_m \A_{G}$, we first take a triangulation of an $m$-gon. On each of the $m-2$ triangles, attach any one of the six quivers formed from performing $S_3$ symmetries on the quiver for $\Conf_3 \A_{G}$ described above. Each edge of each of these triangles has $n$ frozen vertices. Let us describe how to glue two triangles together.

Let $T_1, T_2$ be two triangles with edges $e_1, e_2, e_3$ and $e_4, e_5, e_6$, respectively. Suppose that we would like to glue the edges $e_1$ and $e_4$. $e_1$ and $e_4$ each have $n$ frozen vertices. We will glue these $2n$ vertices together in pairs to form $n$ vertices. Each frozen vertex is glued to another vertex that shares the same function. These vertices then become unfrozen. If $i$ and $i'$ are glued to give a new vertex $i''$, and if $j$ does not get glued, we declare:
$$b_{i''j}=b_{ij}+b_{i'j}.$$
Similarly, if vertices $i$ and $j$ are glued with $i'$ and $j'$ to get new vertices $i''$ and $j''$, then we declare that $$b_{i''j''}=b_{ij}+b_{ij'}+b_{i'j}+b_{i'j'}.$$
In other words, two dotted arrows in the same direction glue to give us a solid arrow, whereas two dotted arrows in the opposite direction cancel to give us no arrow. One can easily check that any gluing will result in no dotted arrows using the unfrozen vertices. The arrows involving vertices that were not previously frozen remain the same.

The cluster structure on $\A_{G,S}$ is constructed in a similar way by amalgamating the cluster structure on $\Conf_3 \A_G$. The space $\Conf_3 \A_G$ is more canonically viewed as the space of decorated local systems on a triangle. It then becomes clear that we can glue these spaces to get decorated local systems on a general surfact $S$.

In order to get see that the cluster structure is mapping class group equivariant and therefore get an embedding of the mapping class group in the cluster modular group, we need to have a sequence of mutations realizing any change in triangulation. This comes down to finding a sequence of mutations realizing a flip--a change in a quadrilateral from the triangulation using one diagonal to the triangulation using the other diagonal.

In type $A$, the flip is realized by the octahedron recurrence. In types $B, C, D,$ the realization of flips via sequences of mutations was performed in \cite{Le}. We treat $G_2$ in the next section. We have the following conjecture, that we know is true in types $A, B, C, D, G$:

\begin{conj} For any reductive group $G$, the six clusters coming from applying $S_3$ symmetries to the original cluster structure on $\Conf_3 \A$ can be realized via a sequence of mutations. Moreover, on $\Conf_4 \A$, the clusters coming from the two different triangulations can be related by a sequence of mutations. Thus we have a cluster on $\Conf_m \A$ attached to any choice of the following data: a triangulation of the $m$-gon, an ordering of the vertices in each triangle, and a choice of a reduced word for $w_0$ on each triangle. All of these clusters are related by sequences of mutations, and hence belong to the same cluster algebra.
\end{conj}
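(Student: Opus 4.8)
The plan is to reduce the statement to two local assertions — one about $\Conf_3 \A_G$ and one about $\Conf_4 \A_G$ — and then propagate them by amalgamation. First I would set up the bookkeeping. By the construction of Section~3.4, a cluster attached to a triple (triangulation $\mathcal T$, orderings of vertices, reduced words for $w_0$) is obtained by amalgamating, over the triangles of $\mathcal T$, the corresponding clusters on $\Conf_3 \A_G$. Since mutation at an unfrozen index commutes with amalgamation (see \cite{FG3}), any sequence of mutations supported on the interior vertices of a single triangle, or on the vertices produced by gluing along a single edge, lifts to a sequence of mutations on $\Conf_m \A_G$ and on $\A_{G,S}$. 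Moreover any two ideal triangulations are joined by a finite sequence of flips (a classical fact about ideal triangulations), and a flip changes $\mathcal T$ only inside one quadrilateral. Hence it suffices to prove: (a) on $\Conf_3 \A_G$, the six clusters obtained by applying $S_3$ symmetries, and the clusters obtained from any two reduced words for $w_0$, all lie in one cluster algebra; and (b) on $\Conf_4 \A_G$, the clusters from the two triangulations of the square lie in one cluster algebra.

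For (a), part of the work is already available. Theorem~\ref{reduced words} of \cite{BFZ} relates the $B^-$-clusters for any two reduced words by mutations, and by Proposition~\ref{reduced} this sequence lifts to $\Conf_3 \A_G$; the key point is that the lifted mutations touch only face vertices, so the weight identities \eqref{face}, \eqref{edge1}, \eqref{edge2} are preserved and the $A_1A_2$-data is forced to agree throughout. The same device, applied to the reversal of a reduced word, relates the transposition cluster $\sigma_{(12)}$ to the original one (the corollary following Proposition~\ref{reduced}), and it pins down the sign ambiguity inherent in the quasi-homomorphism $\tau$. Since $\sigma_{(12)}$ and the $3$-cycle $\sigma_{(123)}$ generate $S_3$, what remains is to realize $\sigma_{(123)}$ by a mutation sequence. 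My approach would be to look for a reduced word $\mathbf i$ for $w_0$ whose $\Conf_3$ quiver is carried to the quiver of another reduced word $\mathbf i'$ by the vertex relabeling induced by the cyclic action — as happens, magically and verbatim, in type $A$ — and when no such word exists, to exhibit an explicit mutation sequence, which on the $B^-$ side amounts to realizing the transpose/twist automorphism of the double Bruhat cell $G^{w_0,e}$ (the phenomenon exploited in \cite{BZ}) by mutations. This is carried out in \cite{Le} for $B, C, D$ and in the treatment of $G_2$ below.

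For (b), glue the two $\Conf_3 \A_G$ quivers along the diagonal to form the $\Conf_4 \A_G$ quiver for, say, the diagonal $A_1A_3$, so that the diagonal vertices become unfrozen. To pass to the diagonal $A_2A_4$, first use the $S_3$ sequences from (a) inside each of the two triangles to bring each $\Conf_3$ seed into the form in which the diagonal edge plays the role of a boundary edge, then run the ``octahedron-type'' mutation sequence at the diagonal and adjacent interior vertices that rotates the triangulation. In type $A$ this sequence is exactly the octahedron recurrence; in types $B, C, D$ it is the sequence of \cite{Le}; in type $G_2$ it is exhibited below. The output has the combinatorics of the $A_2A_4$-triangulated quiver, and the $\A$- and $\X$-transformation rules identify the cluster and $\X$-variables correctly because the amalgamation gluing rules for the $b_{i''j}$ are symmetric in the two triangles. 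Combining (a) and (b) with the reductions of the first paragraph shows that all the clusters in the statement belong to a single cluster algebra.

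The main obstacle is the construction, for a general reductive $G$, of the explicit mutation sequences realizing the $3$-cycle $\sigma_{(123)}$ on $\Conf_3 \A_G$ and the flip on $\Conf_4 \A_G$: there is at present no uniform recipe, each Dynkin type has required its own (and, already for $G_2$, nontrivial) computation, and this is precisely the difficulty that blocks types $E$ and $F$. A secondary technical point, which I do not expect to be serious, is checking that the gluing never reintroduces fractional arrows among unfrozen vertices and that the sign conventions for the odd-permutation clusters are globally consistent; both follow from the normalizations fixed in Sections~3.2--3.3.
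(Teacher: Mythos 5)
Your reduction is essentially the one the paper itself relies on, but you should be clear that the statement you are addressing is stated as a \emph{conjecture} in the paper, and the paper offers no proof of it for general reductive $G$: it only records that the conjecture is known in types $A$, $B$, $C$, $D$ and $G_2$, in each case by an explicit, type-specific computation ($S_3$-symmetry of the standard quiver and the octahedron recurrence in type $A$, the sequences of \cite{Le} in types $B$, $C$, $D$, and the sequences displayed in Section 5 for $G_2$). Your two-step reduction --- (a) relate on $\Conf_3\A_G$ the clusters from all reduced words and all six $S_3$-orderings, (b) relate on $\Conf_4\A_G$ the two triangulations of the square, then propagate by locality of mutation under amalgamation and connectivity of the flip graph --- matches the paper's implicit strategy, and your use of Theorem~\ref{reduced words} and Proposition~\ref{reduced} to handle arbitrary reduced words and the transposition $\sigma_{(12)}$ is exactly what the paper does (note the paper explicitly says that, in general, only $\sigma_{(12)}$ has been explained this way). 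The honest caveat in your last paragraph is the whole point: there is no uniform construction of the mutation sequences realizing the $3$-cycle on $\Conf_3\A_G$ or the flip on $\Conf_4\A_G$, and without it your argument, like the paper's, establishes the statement only in the types where those sequences have been exhibited. So what you have written is a correct organization of the problem and a correct account of which pieces are known, not a proof of the conjecture; no step you propose is wrong, but step (a) for $\sigma_{(123)}$ and step (b) remain open beyond the classical types and $G_2$, which is precisely why the paper leaves the statement as a conjecture.
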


\begin{rmk} For the group $SL_2$, if one tries to perform a flip to get self-folded triangle, one finds oneself in an unexpected cluster, and one is led to consider what are called ``tagged'' triangulations \cite{FST}. For all other groups, the cluster on a self-folded triangle contains the functions one would expect. However, there is still a story that generalizes tagged triangulations, though it is a coincidence that for $SL_2$ these are realized via trying to get a self-folded triangle. In general, there are sequences of mutations realizing the $W$-action at each boundary component without marked points. These sequences of mutations were explained in \cite{GS2} for $SL_n$, and they readily generalize to other groups.
\end{rmk}

\begin{rmk} Note that it is not known in general whether the mapping class group is finite index in the cluster modular group. The cluster modular group certainly contains the $W$ action for each boundary component, as well as elements coming from outer automorphisms of $G$ (discussed in the next section). When there are marked points on the boundary, there are also braid group actions as discussed in \cite{F}. In these situations, we clearly have infinite index. Whether there are other elements in the cluster mapping class group is a mystery to the author.
\end{rmk}

\section{Dynkin automorphisms}

As an application of the construction of the previous section, let us show how to realize the action of outer automorphisms of the group $G$ on $\A_{G,S}$.

Let $\Aut(G)$ denote the group of automorphisms of the group $G$. $G$ acts on itself by conjugation, so it gives rise to an action of the group of inner automorphisms $\Inn(G)$. $\Inn(G)$ is normal inside $\Aut(G)$. Then let $\Out(G):=\Aut(G)/\Inn(G)$ denote the group of outer automorphisms of $G$. It is well known that $\Out(G)$ is a finite group given by automorphisms of the Dynkin diagram of $G$.

We can realize the action of $\Out(G)$ on $G$ in the following way: $\Out(G)$ acts on the Dynkin diagram, so it permutes the simple roots $\alpha$. This induces a natural action of $\Out(G)$ on the Cartan subgroup $H$. It also induces an action of $\Out(G)$ on the generators $x_{\alpha}, y_{\alpha}$. This gives an action of $\Out(G)$ on $G$, which is generated by $H, x_{\alpha}, y_{\alpha}$

The above discussion means that $\Out(G)$ naturally acts on the flag varieties $G/U$ and $G/B$. It also acts on local systems in the following way. Let $\rho: \pi_1(S) \rightarrow G$ be a representation. An element $\sigma \in \Out(G)$ acts on the set of such representations by post-composition:
$$\rho: \pi_1(S) \rightarrow G \xrightarrow{\sigma} G.$$
These actions of $\Out(G)$ on flag varieties and local systems are compatible, and hence give rise to actions on $\A_{G,S}$ and $\X_{G,S}$. Let $\sigma \in \Out(G)$. Then pulling back functions via $\sigma$ gives an action of $\sigma$ on $\mathcal{O}(\A_{G,S})$. We are interested in whether the action of $\sigma$ preserves the cluster structure. In other words, acting by $\sigma$ gives a potentially different cluster structure on $\A_{G,S}$. We would like to show that we in fact get the same cluster structure.

In order to show that $\sigma$ preserves the cluster structure on $\A_{G,S}$, it is enough to show the following: there is some seed $(I,I_0,B,d)$ with functions $f_i \in \A_{G,S}$ attached to each $i \in I$ and a sequence of mutations taking the seed $(I,I_0,B,d)$ to an isomorphic seed $(I',I'_0,B',d')$ such that the functions attached to these two seeds are related by $\sigma$. More precisely, let $\phi: (I,I_0,B,d) \rightarrow (I',I'_0,B',d')$ be such an isomorphism. For any $i \in I$, let the associated function be $f_i$. Then we would like to show that
$$\sigma^*{f_i}=f_{\phi(i)}.$$

\begin{theorem} Let $\sigma \in \Out(G)$. Then $\sigma$ acts on $\A_{G,S}$. Suppose that $\mathcal{O}(\A_{G,S})$ has the structure of a cluster algebra as constructed by the procedure given in Section 3. Then there exists a seed consisting of the functions $$f_1, f_2, \dots, f_n$$ and a sequence of mutations which transforms this seed into the seed consisting of the functions $$\sigma^*f_1, \sigma^*f_2, \dots, \sigma^*f_n$$ in such a way that the other corresponding seed data are isomorphic.
\end{theorem}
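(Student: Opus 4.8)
The plan is to reduce everything to a single triangle, i.e.\ to $\Conf_3\A_G$, and there to exploit the $\Out(G)$-equivariance of the construction of Section~3; the required sequence of mutations is then supplied by Proposition~\ref{reduced} (hence ultimately by Theorem~\ref{reduced words}). First I would fix a lift of the diagram automorphism $\sigma$ to an automorphism of $G$ preserving a pinning $(H,\{x_\alpha\},\{y_\alpha\})$. Such a lift fixes $U^-$ and the standard lift $\overline{w_0}$ (hence $s_G=\overline{w_0}^2$), preserves the Gaussian decomposition $G_0=U^-HU^+$, sends $\omega_i$ to $\omega_{\sigma(i)}$, and sends each $w\in W$ to $\sigma(w)$. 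Since $w_0$ is fixed by every diagram automorphism, $\sigma$ carries a reduced word $\mathbf i=s_{i_1}\cdots s_{i_K}$ for $w_0$ to another reduced word $\sigma(\mathbf i)=s_{\sigma(i_1)}\cdots s_{\sigma(i_K)}$ for $w_0$, and on generalized minors it acts by $\sigma^*\Delta_{u\omega_i,\,v\omega_i}=\Delta_{\sigma(u)\omega_{\sigma(i)},\,\sigma(v)\omega_{\sigma(i)}}$.

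The core step is to check that $\sigma^*$ intertwines the whole Section~3 construction for the word $\mathbf i$ with the one for $\sigma(\mathbf i)$. If $s_{i_l}$ is the $m$-th occurrence of $s_i$ in $\mathbf i$, then $\sigma^*$ sends the minor $\Delta_{u_l\omega_{i_l},\omega_{i_l}}$ attached to the vertex $x_{i,m}$ of the quiver for $\mathbf i$ to $\Delta_{\sigma(u_l)\omega_{\sigma(i_l)},\omega_{\sigma(i_l)}}$, which is exactly the function attached to $x_{\sigma(i),m}$ in the quiver for $\sigma(\mathbf i)$ (the $m$-th occurrence of $s_{\sigma(i)}$ in $\sigma(\mathbf i)$ is in position $l$). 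The relabelling $\phi\colon x_{i,m}\mapsto x_{\sigma(i),m}$ is an isomorphism of quivers-with-multipliers, since each local portion attached to $s_i$ matches the portion attached to $s_{\sigma(i)}$ ($\sigma$ preserves the Cartan matrix) and $d_i=d_{\sigma(i)}$; and since $\sigma$ acts linearly on the weight lattice and permutes the fundamental weights, it preserves the grading condition~\ref{face} and the edge conditions~\ref{edge1}--\ref{edge2}, hence it also matches the edge functions on $A_1A_2$ (the canonical invariant of $[V_{\omega_i}\otimes V_{\omega_i^{\vee}}]^G$ going to that of $[V_{\omega_{\sigma(i)}}\otimes V_{\omega_{\sigma(i)}^{\vee}}]^G$) and the arrows touching $A_1A_2$, both of which were determined uniquely by those conditions. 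Thus $\phi$ identifies the seed $\Sigma_{\sigma(\mathbf i)}$ of $\Conf_3\A_G$ with the seed obtained from $\Sigma_{\mathbf i}$ by replacing every attached function $f$ by $\sigma^*f$.

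Now Proposition~\ref{reduced} provides a sequence of mutations taking $\Sigma_{\mathbf i}$ to $\Sigma_{\sigma(\mathbf i)}$, and by its proof this sequence involves only face vertices. Transporting the identification of the previous paragraph through this sequence yields a mutation sequence carrying $\Sigma_{\mathbf i}$ to a seed whose combinatorial data is $\phi$-isomorphic to that of $\Sigma_{\mathbf i}$ and whose functions are the $\sigma^*f_i$; this is the theorem for $\Conf_3\A_G$. For a general hyperbolic $S$, fix an ideal triangulation, use the word $\mathbf i$ on every triangle, and amalgamate the seeds $\Sigma_{\mathbf i}$ as in Section~3. Because $\sigma$ acts the same way on every triangle and the above mutation sequence stays entirely among the face vertices — which are never frozen and never glued during amalgamation — performing it in each triangle is legitimate on the amalgamated seed and is compatible with the gluing of the edge functions ($\sigma^*$ permutes these consistently with $\phi$). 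This produces the required seed on $\A_{G,S}$ together with the sequence of mutations.

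The main obstacle is the equivariance assertion of the second paragraph: one must verify that generalized minors and the BFZ quiver really do transform under $\sigma^*$ by the naive relabelling $\phi$, which comes down to choosing the lift of $\sigma$ compatibly with the pinning and to the behaviour of the Gaussian decomposition under that lift. Everything else is bookkeeping: the equality $d_i=d_{\sigma(i)}$ is automatic because nontrivial outer automorphisms occur only in simply-laced types (it also follows from $\sigma$ preserving root lengths), and the reduction from $S$ to a single triangle is routine precisely because amalgamation only ever glues frozen edge vertices while our mutations remain among the face vertices.
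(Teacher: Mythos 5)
Your proposal is correct and follows essentially the same route as the paper's own proof: reduce to a single triangle, observe that $\sigma$ sends the reduced word $\mathbf i$ to the reduced word $\sigma(\mathbf i)$ and intertwines the generalized minors and the (purely Lie-theoretic) seed data accordingly, and then invoke Proposition~\ref{reduced} (via the BFZ result) to supply the mutation sequence between the two seeds. Your added care about the choice of pinning and the compatibility of the face-vertex mutation sequence with amalgamation only fills in details the paper leaves implicit.
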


\begin{rmk} Once one has the above theorem for one seed, the theorem for any seed is automatic by mutation. The conditions of the theorem hold in types $A$ and $D_4$ \cite{FG1}, \cite{Le}. They are expected to hold in type $E_6$.
\end{rmk}

\begin{proof} It is enough to realize $\sigma$ via a sequence of mutations on each triangle.

The automorphism $\sigma$ permutes the vertices of the Dynkin diagram, so that it permutes the fundamental weights. Thus we get a natural action of $\sigma$ on the simple reflections $s_i \in W$. Consider the seed constructed from a reduced word for $w_0$:
$$w_0=s_{i_1} s_{i_2} \dots s_{i_K-1} s_{i_K}.$$
Because $\sigma$ is an automorphism of the Dynkin diagram, there is another reduced word
$$w_0=s_{\sigma(i_1)} s_{\sigma(i_2)} \dots s_{\sigma(i_{K-1})} s_{\sigma(i_K)}.$$
This reduced word gives a second seed.

By \cite{BFZ} and \label{reduced}, there is a sequence of mutations relating the seeds coming from these two reduced words. We then clearly have
$$\sigma^*(\Delta_{\sigma(u_{l})\omega_{\sigma(i_l)}, \omega_{\sigma(i_l)}})=\Delta_{u_{l}\omega_{i_l}, \omega_{i_l}}$$
and a similar formula holds for all the cluster functions in our two seeds. We then only need to verify that the seed data are isomorphic. But the seed data were determined by steps 1-5 in our procedure, which was completely Lie-theoretic, so that the seed data for the two seeds must be isomorphic.
\end{proof}

The above theorem applies to the cases when $G$ has type $A$ or $D_4$. We also expect it to apply for the automophism of $E_6$, dependent upon conjectures ~\ref{minorweight} and ~\ref{edgeweights}. In the next section, we show that there is actually a seed for the cluster structure on $\Conf_3 \A_{Spin_8,S}$ which is preserved by the outer automorphisms of $Spin_8$.

One interesting interpretation of the above theorem is that the outer automorphism $\sigma \in \Out(G)$ gives rise to an element of the \emph{cluster modular group}. This group has been well-studied, for example in \cite{FG2}, \cite{GS2}, \cite{F}. Outside of a small number of cases, the cluster modular group of $\A_{G,S}$ is known to include the mapping class group of the surface $S$. Work of Goncharov and Shen \cite{GS2} as well as their forthcoming work shows that it also contains a copy of the Weyl group $W$ for every hole of $S$ without marked points. Together, the mapping class group, the copies of $W$, and the outer automorphisms $\sigma \in \Out(G)$ give all known elements of the cluster modular group.

\begin{rmk} Let us say something the case when $G$ has type $A$. In \cite{Hen}, there is a sequence of mutations on $\Conf_3 \A_{SL_N}$ realizing the outer automorphism of $SL_N$. We will call this sequence of mutations the \emph{cactus sequence}.
We can interpret this sequence of mutations as relating the cluster structure coming from the reduced word 
$$s_1 s_2 \dots s_{N-1} s_1 s_2 \dots s_{N-2} \dots s_1 s_2 s_3 s_1 s_2 s_1$$
to the cluster structure coming from the reduced word 
$$s_{N-1} s_{N-2} \dots s_{1} s_{N-1} s_{N-2} \dots s_{2} \dots s_{N-1} s_{N-2} s_{N-3} s_{N-1} s_{N-2} s_{N-1}.$$
\end{rmk}

\section{$G$ has type $G_2$}

Throughout this section, $G$ will be the simply-connected group with type $G_2$. The root system for $G$ contains $12$ roots, and there are two simple roots, which we will call $\alpha$ and $\beta$. We normalize so that the short root $\alpha$ has length $1$, while the long root $\beta$ has length $3$. Let the corresponding simple reflections in the Weyl group be $s_a$ and $s_b$.

We will consider the quiver for $\Conf_3 \A_{G}$ coming from the reduced word
$$w_0=s_bs_as_bs_as_bs_a.$$
This yields the following quiver for $B^-$:

\begin{center}
\begin{tikzpicture}[scale=2]

  \node (xa0) at (-1.5,0) {\Large $\ontop{x_{a0}}{\bullet}$};
  \node (xa1) at (-0.5,0) {\Large $\ontop{x_{a1}}{\bullet}$};
  \node (xa2) at (0.5,0) {\Large $\ontop{x_{a2}}{\bullet}$};
  \node (xa3) at (1.5,0) {\Large $\ontop{x_{a3}}{\bullet}$};

  \node (xb0) at (-1.5,-1) {\Large $\ontop{x_{b0}}{\circ}$};
  \node (xb1) at (-0.5,-1) {\Large $\ontop{x_{b1}}{\circ}$};
  \node (xb2) at (0.5,-1) {\Large $\ontop{x_{b2}}{\circ}$};
  \node (xb3) at (1.5,-1) {\Large $\ontop{x_{b3}}{\circ}$};

  \draw [->] (xa1) to (xa0);
  \draw [->] (xa2) to (xa1);
  \draw [->] (xa3) to (xa2);

  \draw [->] (xb1) to (xb0);
  \draw [->] (xb2) to (xb1);
  \draw [->] (xb3) to (xb2);

  \draw [->] (xb0) to (xa1);
  \draw [->] (xb1) to (xa2);
  \draw [->] (xb2) to (xa3);

  \draw [->, dashed] (xa0) to (xb0);
  \draw [->] (xa1) to (xb1);
  \draw [->] (xa2) to (xb2);
  \draw [->, dashed] (xa3) to (xb3);

\draw[yshift=-1.5cm]
  node[below,text width=6cm] 
  {
  Figure 8. The quiver for the cluster algebra on $B^-$ for $G$ of type $G_2$.
  };

\end{tikzpicture}
\end{center}

Here is how it corresponds to the reduced word for $w_0$:

\begin{center}
\begin{tikzpicture}[scale=2]

  \node (xa0) at (-1.5,0) {${\bullet}$};
  \node (xa1) at (-0.5,0) {${\bullet}$};
  \node (xa2) at (0.5,0) {${\bullet}$};
  \node (xa3) at (1.5,0) {${\bullet}$};

  \node (xb0) at (-1.5,-1) {${\circ}$};
  \node (xb1) at (-0.5,-1) {${\circ}$};
  \node (xb2) at (0.5,-1) {${\circ}$};
  \node (xb3) at (1.5,-1) {${\circ}$};

  \draw [->, postaction={decorate}] (xa1) -- (xa0) node [midway, above] {$s_a$};
  \draw [->, postaction={decorate}] (xa2) -- (xa1) node [midway, above] {$s_a$};
  \draw [->, postaction={decorate}] (xa3) -- (xa2) node [midway, above] {$s_a$};

  \draw [->, postaction={decorate}] (xb1) -- (xb0) node [midway, above] {$s_b$};
  \draw [->, postaction={decorate}] (xb2) -- (xb1) node [midway, above] {$s_b$};
  \draw [->, postaction={decorate}] (xb3) -- (xb2) node [midway, above] {$s_b$};

  \draw [->] (xb0) to (xa1);
  \draw [->] (xb1) to (xa2);
  \draw [->] (xb2) to (xa3);

  \draw [->, dashed] (xa0) to (xb0);
  \draw [->] (xa1) to (xb1);
  \draw [->] (xa2) to (xb2);
  \draw [->, dashed] (xa3) to (xb3);

\draw[yshift=-1.5cm]
  node[below,text width=6cm] 
  {
  Figure 9. How the quiver for $B^-_{G}$ corresponds to the reduced word $s_bs_as_bs_as_bs_a$.

  };

\end{tikzpicture}
\end{center}

\subsection{The functions} We now need to calculate the functions attached to the each vertex in the quiver. We will do this by relating the functions on $B^-_{G}$ (and also $\Conf_3 \A_{G}$) to those on $B^-_{Spin_8}$ (and $\Conf_3 \A_{Spin_8}$).

Note that $Spin_8$ has a Dynkin diagram with four nodes, with the central node connected to the three others. The group $S_3$ therefore acts by automorphisms on the Dynkin diagram, so that $Out(Spin_8) \simeq S_3$. The group $G$ embeds into $Spin_8$ as the fixed point set of the outer automorphism group. The embedding
$$G \xhookrightarrow{i} Spin_8$$
gives an embedding 
$$\Conf_3 \A_{G} \xhookrightarrow{j} \Conf_3 \A_{Spin_8}.$$
Then all functions in the cluster algebra on $\Conf_3 \A_G$ are pulled back from functions in the cluster algebra on $\Conf_3 \A_{Spin_8}.$ Let us elaborate upon this.

The reduced word $s_bs_as_bs_as_bs_a$ comes from a folding of a reduced word for the longest element of the Weyl group of $Spin_8$. The group $Spin_8$ has four simple roots. Let us call them $\alpha_1, \alpha_2, \alpha_3, \beta$, where the $\alpha_i$ fold to give the short root $\alpha$. Let the corresponding simple reflections be $s_{a_1}, s_{a_2}, s_{a_3}, s_{b}$. Then a reduced word for $w_0$ for $Spin_8$ is given by
$$w_0=s_bs_{a_1}s_{a_2}s_{a_3}s_bs_{a_1}s_{a_2}s_{a_3}s_bs_{a_1}s_{a_2}s_{a_3}.$$
Then, using the constructions of the previous section, this reduced word gives a cluster for $\Conf_3 \A_{Spin_8}.$ The quiver for $B^-_{Spin_8}$ is depicted below.

\begin{center}
\begin{tikzpicture}[scale=2]

  \node (xa10) at (-1.5,0) {\Large $\ontop{x_{a_10}}{\bullet}$};
  \node (xa11) at (-0.5,0) {\Large $\ontop{x_{a_11}}{\bullet}$};
  \node (xa12) at (0.5,0) {\Large $\ontop{x_{a_12}}{\bullet}$};
  \node (xa13) at (1.5,0) {\Large $\ontop{x_{a_13}}{\bullet}$};

  \node (xa20) at (-2.3,-2) {\Large $\ontop{x_{a_20}}{\bullet}$};
  \node (xa21) at (-1.3,-2) {\Large $\ontop{x_{a_21}}{\bullet}$};
  \node (xa22) at (-0.3,-2) {\Large $\ontop{x_{a_22}}{\bullet}$};
  \node (xa23) at (0.7,-2) {\Large $\ontop{x_{a_23}}{\bullet}$};

  \node (xa30) at (-0.5,-2.3) {\Large $\ontop{x_{a_30}}{\bullet}$};
  \node (xa31) at (0.5,-2.3) {\Large $\ontop{x_{a_31}}{\bullet}$};
  \node (xa32) at (1.5,-2.3) {\Large $\ontop{x_{a_32}}{\bullet}$};
  \node (xa33) at (2.5,-2.3) {\Large $\ontop{x_{a_33}}{\bullet}$};

  \node (xb0) at (-1.5,-1) {\Large $\ontop{x_{b0}}{\bullet}$};
  \node (xb1) at (-0.5,-1) {\Large $\ontop{x_{b1}}{\bullet}$};
  \node (xb2) at (0.5,-1) {\Large $\ontop{x_{b2}}{\bullet}$};
  \node (xb3) at (1.5,-1) {\Large $\ontop{x_{b3}}{\bullet}$};

  \draw [->, postaction={decorate}] (xb1) -- (xb0) node [midway, above] {};
  \draw [->, postaction={decorate}] (xb2) -- (xb1) node [midway, above] {};
  \draw [->, postaction={decorate}] (xb3) -- (xb2) node [midway, above] {};

  \draw [->, postaction={decorate}] (xa11) -- (xa10) node [midway, above] {};
  \draw [->, postaction={decorate}] (xa12) -- (xa11) node [midway, above] {};
  \draw [->, postaction={decorate}] (xa13) -- (xa12) node [midway, above] {};

  \draw [->] (xb0) to (xa11);
  \draw [->] (xb1) to (xa12);
  \draw [->] (xb2) to (xa13);

  \draw [->, dashed] (xa10) to (xb0);
  \draw [->] (xa11) to (xb1);
  \draw [->] (xa12) to (xb2);
  \draw [->, dashed] (xa13) to (xb3);

  \draw [->, postaction={decorate}] (xa21) -- (xa20) node [midway, above] {};
  \draw [->, postaction={decorate}] (xa22) -- (xa21) node [midway, above] {};
  \draw [->, postaction={decorate}] (xa23) -- (xa22) node [midway, above] {};

  \draw [->] (xb0) to (xa21);
  \draw [->] (xb1) to (xa22);
  \draw [->] (xb2) to (xa23);

  \draw [->, dashed] (xa20) to (xb0);
  \draw [->] (xa21) to (xb1);
  \draw [->] (xa22) to (xb2);
  \draw [->, dashed] (xa23) to (xb3);

  \draw [->, postaction={decorate}] (xa31) -- (xa30) node [midway, above] {};
  \draw [->, postaction={decorate}] (xa32) -- (xa31) node [midway, above] {};
  \draw [->, postaction={decorate}] (xa33) -- (xa32) node [midway, above] {};

  \draw [->] (xb0) to (xa31);
  \draw [->] (xb1) to (xa32);
  \draw [->] (xb2) to (xa33);

  \draw [->, dashed] (xa30) to (xb0);
  \draw [->] (xa31) to (xb1);
  \draw [->] (xa32) to (xb2);
  \draw [->, dashed] (xa33) to (xb3);

\draw[yshift=-2.7cm]
  node[below,text width=6cm] 
  {
  Figure 10. The cluster algebra for $B^-_{Spin_8}$.
  };

\end{tikzpicture}
\end{center}

This particular cluster is invariant under the action of $Out(Spin_8)$. It should be clear how $S_3$ acts on this quiver: There are vertices $x_{a_ij}$ for $1 \leq i \leq 3$ and $0 \leq j \leq 3$. The vertices $x_{a_ij}$ for fixed $j$ are permuted among each other. The action of $\sigma \in S_3$ maps  $x_{a_ij} \rightarrow x_{a_{\sigma(i)}j}$.

\begin{center}
\begin{tikzpicture}[scale=2]

  \node (xa10) at (-1.5,0) {${\bullet}$};
  \node (xa11) at (-0.5,0) {${\bullet}$};
  \node (xa12) at (0.5,0) {${\bullet}$};
  \node (xa13) at (1.5,0) {${\bullet}$};

  \node (xa20) at (-2.3,-2) {${\bullet}$};
  \node (xa21) at (-1.3,-2) {${\bullet}$};
  \node (xa22) at (-0.3,-2) {${\bullet}$};
  \node (xa23) at (0.7,-2) {${\bullet}$};

  \node (xa30) at (-0.5,-2.3) {${\bullet}$};
  \node (xa31) at (0.5,-2.3) {${\bullet}$};
  \node (xa32) at (1.5,-2.3) {${\bullet}$};
  \node (xa33) at (2.5,-2.3) {${\bullet}$};

  \node (xb0) at (-1.5,-1) {${\bullet}$};
  \node (xb1) at (-0.5,-1) {${\bullet}$};
  \node (xb2) at (0.5,-1) {${\bullet}$};
  \node (xb3) at (1.5,-1) {${\bullet}$};

  \draw [->, postaction={decorate}] (xb1) -- (xb0) node [midway, above] {$s_b$};
  \draw [->, postaction={decorate}] (xb2) -- (xb1) node [midway, above] {$s_b$};
  \draw [->, postaction={decorate}] (xb3) -- (xb2) node [midway, above] {$s_b$};

  \draw [->, postaction={decorate}] (xa11) -- (xa10) node [midway, above] {$s_{a_1}$};
  \draw [->, postaction={decorate}] (xa12) -- (xa11) node [midway, above] {$s_{a_1}$};
  \draw [->, postaction={decorate}] (xa13) -- (xa12) node [midway, above] {$s_{a_1}$};

  \draw [->] (xb0) to (xa11);
  \draw [->] (xb1) to (xa12);
  \draw [->] (xb2) to (xa13);

  \draw [->, dashed] (xa10) to (xb0);
  \draw [->] (xa11) to (xb1);
  \draw [->] (xa12) to (xb2);
  \draw [->, dashed] (xa13) to (xb3);

  \draw [->, postaction={decorate}] (xa21) -- (xa20) node [midway, above] {$s_{a_2}$};
  \draw [->, postaction={decorate}] (xa22) -- (xa21) node [midway, above] {$s_{a_2}$};
  \draw [->, postaction={decorate}] (xa23) -- (xa22) node [midway, above] {$s_{a_2}$};

  \draw [->] (xb0) to (xa21);
  \draw [->] (xb1) to (xa22);
  \draw [->] (xb2) to (xa23);

  \draw [->, dashed] (xa20) to (xb0);
  \draw [->] (xa21) to (xb1);
  \draw [->] (xa22) to (xb2);
  \draw [->, dashed] (xa23) to (xb3);

  \draw [->, postaction={decorate}] (xa31) -- (xa30) node [midway, above] {$s_{a_3}$};
  \draw [->, postaction={decorate}] (xa32) -- (xa31) node [midway, above] {$s_{a_3}$};
  \draw [->, postaction={decorate}] (xa33) -- (xa32) node [midway, above] {$s_{a_3}$};

  \draw [->] (xb0) to (xa31);
  \draw [->] (xb1) to (xa32);
  \draw [->] (xb2) to (xa33);

  \draw [->, dashed] (xa30) to (xb0);
  \draw [->] (xa31) to (xb1);
  \draw [->] (xa32) to (xb2);
  \draw [->, dashed] (xa33) to (xb3);

\draw[yshift=-2.7cm]
  node[below,text width=6cm] 
  {
  Figure 11.  How the quiver  for $B^-_{Spin_8}$ corresponds to the reduced word $w_0=s_bs_{a_1}s_{a_2}s_{a_3}s_bs_{a_1}s_{a_2}s_{a_3}s_bs_{a_1}s_{a_2}s_{a_3}$.
  };

\end{tikzpicture}
\end{center}

The functions attached to the above cluster can be calculated by reduced minors. Alternatively, the above cluster is related to the one constructed in \cite{Le} by a sequence of mutations--since both clusters come from a reduced word we can use Theorem~\ref{reduced words}. Therefore the functions attached to the cluster above can be written directly in terms of the functions from \cite{Le}. Let us now explain how this allows us to construct the functions on $B^-_{G}$ and $\Conf_3 \A_{G}$ from those on $\Conf_3 \A_{Spin_8}$.

Suppose that the cluster function attached to a vertex $x_i$ is $f_i$. The action of $\sigma \in Out(G)$ then acts on functions by $\sigma^*(f_{a_ij}) = f_{a_{\sigma(i)}j}$. Then $B^-_{G} \subset B^-_{Spin_8}$ is precisely the locus of points where $f_{a_1j}=f_{a_2j}=f_{a_3j}$ for all $j$. Moreover, we can pull back the function $f_{a_ij}$ to $B^-_{G}$ to get the function $f_{aj}$.

Using the constructions of the previous section, we can complete the cluster algebra structure on $B^-_{Spin_8}$ to obtain one on $\Conf_3 \A_{Spin_8}$. By slight abuse of notation, we will still denote by $f_{a_ij}$ the function $f_{a_ij}$ extended to $\Conf_3 \A_{Spin_8}$. Then the functions on $\Conf_3 \A_{Spin_8}$ will be $f_{a_ij}$ for $1 \leq i \leq 3$ and $0 \leq j \leq 3$ plus the functions on the third edge, which are analogous to the functions on the other two edges. Because our construction was Lie-theoretic, we still have an action of $Out(Spin_8)$ this cluster for $\Conf_3 \A_{Spin_8}$. We can fold this cluster by the automorphisms to get the cluster with the quiver in Figure 12. In particular, the quiver contains information on how the edge vertices along the third edge attach to the cluster for $B^-$.

\begin{center}
\begin{tikzpicture}[scale=2]

  \node (xa0) at (-1.5,0) {\Large $\ontop{x_{a0}}{\bullet}$};
  \node (xa1) at (-0.5,0) {\Large $\ontop{x_{a1}}{\bullet}$};
  \node (xa2) at (0.5,0) {\Large $\ontop{x_{a2}}{\bullet}$};
  \node (xa3) at (1.5,0) {\Large $\ontop{x_{a3}}{\bullet}$};

  \node (xb0) at (-1.5,-1) {\Large $\ontop{x_{b0}}{\circ}$};
  \node (xb1) at (-0.5,-1) {\Large $\ontop{x_{b1}}{\circ}$};
  \node (xb2) at (0.5,-1) {\Large $\ontop{x_{b2}}{\circ}$};
  \node (xb3) at (1.5,-1) {\Large $\ontop{x_{b3}}{\circ}$};

  \node (xa) at (-0.5,1) {\Large $\ontop{x_{a}}{\bullet}$};
  \node (xb) at (0.5,-2) {\Large $\ontop{x_{b}}{\circ}$};

  \draw [->] (xa1) to (xa0);
  \draw [->] (xa2) to (xa1);
  \draw [->] (xa3) to (xa2);

  \draw [->] (xb1) to (xb0);
  \draw [->] (xb2) to (xb1);
  \draw [->] (xb3) to (xb2);

  \draw [->] (xb0) to (xa1);
  \draw [->] (xb1) to (xa2);
  \draw [->] (xb2) to (xa3);
  \draw [->] (xa0) to (xa);
  \draw [->] (xb) to (xb3);

  \draw [->] (xb2) to (xb);
  \draw [->] (xa) to (xa1);

  \draw [->, dashed] (xb) .. controls +(0:2) and +(0:4) ..  (xa);

  \draw [->, dashed] (xa0) to (xb0);
  \draw [->] (xa1) to (xb1);
  \draw [->] (xa2) to (xb2);
  \draw [->, dashed] (xa3) to (xb3);

\draw[yshift=-2.5cm]
  node[below,text width=6cm] 
  {
  Figure 12. The quiver for the cluster algebra on $\Conf_3 \A_G$ for $G$ of type $G_2$.
  };

\end{tikzpicture}
\end{center}

If the function $f_i$ is attached to the vertex $x_i$, we will have that 
$$j^*(f_{a_ij})=f_{aj}.$$

In figure 13, we put functions at the corresponding vertices. We will abbreviate by $$\tcfr{\nu}{\lambda}{\mu}$$
a function which lies in the invariant space 
$$[V_{\lambda} \otimes V_{\mu} \otimes V_{\nu}]^G,$$
where $\lambda, \mu, \nu$ are the gradings of the function in the space of functions on the flags $A_1, A_2, A_3$ respectively. We picture the three flags lying at the vertices of a triangle as follows:

\begin{center}
\begin{tikzpicture}[scale=2]

  \node (A_1) at (-0.7,-1) {$A_1$};
  \node (A_2) at (1,0) {$A_2$};
  \node (A_3) at (-0.7,1) {$A_3$};

  \draw [] (A_1) -- (A_2);
  \draw [] (A_2) -- (A_3);
  \draw [] (A_3) -- (A_1);

\end{tikzpicture}
\end{center}

For the remainder of this paper, we will not specify which particular invariant vector within $[V_{\lambda} \otimes V_{\mu} \otimes V_{\nu}]^G$ the function corresponds to. The particular functions can be calculated in terms of the functions in \cite{Le}. We emphasize the weights of these functions because they are what are important for determining the quiver using the algorithm prescribed in Section 4, as well as for describing the $\X$-space. We will denote by $a$ the fundamental weight $\omega_{\alpha}$ and by $b$ the fundamental weight $\omega_{\beta}$.

\begin{center}
\begin{tikzpicture}[scale=2]

  \node (xa0) at (-1.5,0) {$\tcfr{a}{a}{}$};
  \node (xa1) at (-0.5,0) {$\tcfr{a}{b}{a}$};
  \node (xa2) at (0.5,0) {$\tcfr{a}{b}{2a}$};
  \node (xa3) at (1.5,0) {$\tcfr{a}{}{a}$};

  \node (xb0) at (-1.5,-1) {$\tcfr{b}{b}{}$};
  \node (xb1) at (-0.5,-1) {$\tcfr{b}{2b}{3a}$};
  \node (xb2) at (0.5,-1) {$\tcfr{b}{b}{3a}$};
  \node (xb3) at (1.5,-1) {$\tcfr{b}{}{b}$};

  \node (xa) at (-0.5,1) {$\tcfr{}{a}{a}$};
  \node (xb) at (0.5,-2) {$\tcfr{}{b}{b}$};

  \draw [->] (xa1) to (xa0);
  \draw [->] (xa2) to (xa1);
  \draw [->] (xa3) to (xa2);

  \draw [->] (xb1) to (xb0);
  \draw [->] (xb2) to (xb1);
  \draw [->] (xb3) to (xb2);

  \draw [->] (xb0) to (xa1);
  \draw [->] (xb1) to (xa2);
  \draw [->] (xb2) to (xa3);
  \draw [->] (xa0) to (xa);
  \draw [->] (xb) to (xb3);

  \draw [->] (xb2) to (xb);
  \draw [->] (xa) to (xa1);

  \draw [->, dashed] (xb) .. controls +(0:2) and +(0:4) ..  (xa);

  \draw [->, dashed] (xa0) to (xb0);
  \draw [->] (xa1) to (xb1);
  \draw [->] (xa2) to (xb2);
  \draw [->, dashed] (xa3) to (xb3);

\draw[yshift=-2.5cm]
  node[below,text width=6cm] 
  {
  Figure 13. The functions for the cluster algebra on $\Conf_3 \A_G$ for $G$ of type $G_2$.
  };

\end{tikzpicture}
\end{center}

\begin{rmk} Note that in the construction of the cluster structure on $\Conf_3 \A_G$ for $G=G_2$, we were able to work around calculating with generalized minors by using the embedding
$$\Conf_3 \A_{G} \hookrightarrow \Conf_3 \A_{Spin_8}$$
and pulling back functions. Cluster folding allows us to relate sequences of mutations on $\Conf_3 \A_G$ for $G=G_2$ with those on $\Conf_3 \A_{Spin_8}$. This allows us to both identify the functions and compute which tensor invariant spaces they lie in. As far as we know, a similar work-around is not available in types $E$ and $F$, and one must carry out the computation more directly. 
\end{rmk}

\subsection{The first transposition}

We now exhibit the sequence of mutations associated to the transposition $(13) \in S_3$. This transposition is realized by the mutation sequence 

\begin{equation} \label{13}
\begin{gathered}
x_{a2}, \\
x_{a1}, x_{b1}, \\
x_{a2} \\
\end{gathered}
\end{equation}

We view the sequence of mutations as consisting of three stages, corresponding to the three rows in which we listed the mutations. The quiver and the functions transform as follows:

\begin{center}
\begin{tikzpicture}[scale=2]

  \node (xa0) at (-1.5,0) {$\tcfr{a}{a}{}$};
  \node (xa1) at (-0.5,0) {$\tcfr{a}{b}{a}$};
  \node (xa2) at (0.5,0) {$\tcfr{b}{b}{2a}$};
  \node (xa3) at (1.5,0) {$\tcfr{a}{}{a}$};

  \node (xb0) at (-1.5,-1) {$\tcfr{b}{b}{}$};
  \node (xb1) at (-0.5,-1) {$\tcfr{b}{2b}{3a}$};
  \node (xb2) at (0.5,-1) {$\tcfr{b}{b}{3a}$};
  \node (xb3) at (1.5,-1) {$\tcfr{b}{}{b}$};

  \node (xa) at (-0.5,1) {$\tcfr{}{a}{a}$};
  \node (xb) at (0.5,-2) {$\tcfr{}{b}{b}$};

  \draw [->] (xa1) to (xa0);
  \draw [->] (xa1) to (xa2);
  \draw [->] (xa2) to (xa3);
  \draw [->] (xa3) .. controls +(135:1) and +(45:1) ..  (xa1);

  \draw [->] (xb1) to (xb0);
  \draw [->] ([yshift=1.5pt, xshift=-1pt] xb1.east) to ([yshift=1.5pt, xshift=1pt] xb2.west);
  \draw [->] ([yshift=-1.5pt, xshift=-1pt] xb1.east) to ([yshift=-1.5pt, xshift=1pt] xb2.west);
  \draw [->] (xb3) to (xb2);

  \draw [->] (xb0) to (xa1);
  \draw [->] (xa2) to (xb1);
  \draw [->] (xa0) to (xa);
  \draw [->] (xb) to (xb3);

  \draw [->] (xb2) to (xb);
  \draw [->] (xa) to (xa1);

  \draw [->, dashed] (xb) .. controls +(0:2) and +(0:4) ..  (xa);

  \draw [->, dashed] (xa0) to (xb0);
  \draw [->] (xb2) to (xa2);
  \draw [->, dashed] (xa3) to (xb3);

\draw[yshift=-2.5cm]
  node[below,text width=6cm] 
  {
  Figure 14a. The functions for the cluster algebra on $\Conf_3 \A_G$ after performing the first mutation.
  };

\end{tikzpicture}
\end{center}

\begin{center}
\begin{tikzpicture}[scale=2]

  \node (xa0) at (-1.5,0) {$\tcfr{a}{a}{}$};
  \node (xa1) at (-0.5,0) {$\tcfr{b}{a}{a}$};
  \node (xa2) at (0.5,0) {$\tcfr{b}{b}{2a}$};
  \node (xa3) at (-0.5,1) {$\tcfr{a}{}{a}$};

  \node (xb0) at (-1.5,-1) {$\tcfr{b}{b}{}$};
  \node (xb1) at (-0.5,-1) {$\tcfr{2b}{b}{3a}$};
  \node (xb2) at (0.5,-1) {$\tcfr{b}{b}{3a}$};
  \node (xb3) at (0.5,-2) {$\tcfr{b}{}{b}$};

  \node (xa) at (1.5,0) {$\tcfr{}{a}{a}$};
  \node (xb) at (1.5,-1) {$\tcfr{}{b}{b}$};

  \draw [->] (xa0) to (xa1);
  \draw [->] (xa2) to (xa1);
  \draw [->] (xa) to  (xa2);

  \draw [->] (xa1) to  (xa3);
  \draw [->] (xa3) to  (xa0);

  \draw [->] (xb0) to (xb1);
  \draw [->] ([yshift=1.5pt, xshift=1pt] xb2.west) to ([yshift=1.5pt, xshift=-1pt] xb1.east);
  \draw [->] ([yshift=-1.5pt, xshift=1pt] xb2.west) to ([yshift=-1.5pt, xshift=-1pt] xb1.east);
  \draw [->] (xb3) to (xb2);

  \draw [->] (xa1) to (xb0);
  \draw [->] (xb1) to (xa2);
  \draw [->] (xb) to (xb3);

  \draw [->] (xb2) to (xb);
  \draw [->] (xa1) .. controls +(45:1) and +(135:1) .. (xa);

  \draw [->, dashed] (xb) to (xa);

  \draw [->, dashed] (xb0) to (xa0);
  \draw [->] (xa2) to (xb2);
  \draw [->, dashed] (xa3) .. controls +(0:4) and +(0:2) ..   (xb3);

\draw[yshift=-2.5cm]
  node[below,text width=6cm] 
  {
  Figure 14b. The functions for the cluster algebra on $\Conf_3 \A_G$ after performing the third mutation.
  };

\end{tikzpicture}
\end{center}

\begin{center}
\begin{tikzpicture}[scale=2]

  \node (xa0) at (-1.5,0) {$\tcfr{a}{a}{}$};
  \node (xa1) at (-0.5,0) {$\tcfr{b}{a}{a}$};
  \node (xa2) at (0.5,0) {$\tcfr{b}{a}{2a}$};
  \node (xa3) at (-0.5,1) {$\tcfr{a}{}{a}$};

  \node (xb0) at (-1.5,-1) {$\tcfr{b}{b}{}$};
  \node (xb1) at (-0.5,-1) {$\tcfr{2b}{b}{3a}$};
  \node (xb2) at (0.5,-1) {$\tcfr{b}{b}{3a}$};
  \node (xb3) at (0.5,-2) {$\tcfr{b}{}{b}$};

  \node (xa) at (1.5,0) {$\tcfr{}{a}{a}$};
  \node (xb) at (1.5,-1) {$\tcfr{}{b}{b}$};

  \draw [->] (xa0) to (xa1);
  \draw [->] (xa1) to (xa2);
  \draw [->] (xa2) to  (xa);

  \draw [->] (xa1) to  (xa3);
  \draw [->] (xa3) to  (xa0);

  \draw [->] (xb0) to (xb1);
  \draw [->] (xb1) to (xb2);
  \draw [->] (xb3) to (xb2);

  \draw [->] (xa1) to (xb0);
  \draw [->] (xa2) to (xb1);
  \draw [->] (xa) to (xb2);
  \draw [->] (xb) to (xb3);

  \draw [->] (xb2) to (xb);

  \draw [->, dashed] (xb) to (xa);

  \draw [->, dashed] (xb0) to (xa0);
  \draw [->] (xb1) to (xa1);
  \draw [->] (xb2) to (xa2);
  \draw [->, dashed] (xa3) .. controls +(0:4) and +(0:2) ..   (xb3);

\draw[yshift=-2.5cm]
  node[below,text width=6cm] 
  {
  Figure 14c. The functions for the cluster algebra on $\Conf_3 \A_G$ after performing the fourth and final mutation.
  };

\end{tikzpicture}
\end{center}

\subsection{The second transposition}

We now exhibit the sequence of mutations associated to the transposition $(23) \in S_3$. This transposition is realized by the mutation sequence 

\begin{equation} \label{23}
\begin{gathered}
x_{b1}, \\
x_{b2}, x_{a2}, \\
x_{b1} \\
\end{gathered}
\end{equation}

We view the sequence of mutations as consisting of three stages, corresponding to the three rows in which we listed the mutations. The quiver and the functions transform as follows:

\begin{center}
\begin{tikzpicture}[scale=2]

  \node (xa0) at (-1.5,0) {$\tcfr{a}{a}{}$};
  \node (xa1) at (-0.5,0) {$\tcfr{a}{b}{a}$};
  \node (xa2) at (0.5,0) {$\tcfr{a}{b}{2a}$};
  \node (xa3) at (1.5,0) {$\tcfr{a}{}{a}$};

  \node (xb0) at (-1.5,-1) {$\tcfr{b}{b}{}$};
  \node (xb1) at (-0.5,-1) {$\tcfr{3a}{2b}{3a}$};
  \node (xb2) at (0.5,-1) {$\tcfr{b}{b}{3a}$};
  \node (xb3) at (1.5,-1) {$\tcfr{b}{}{b}$};

  \node (xa) at (-0.5,1) {$\tcfr{}{a}{a}$};
  \node (xb) at (0.5,-2) {$\tcfr{}{b}{b}$};

  \draw [->] (xa1) to (xa0);
  \draw [<-] ([yshift=1.5pt, xshift=1pt] xa2.west) to ([yshift=1.5pt, xshift=-1pt] xa1.east);
  \draw [<-] ([yshift=-1.5pt, xshift=1pt] xa2.west) to ([yshift=-1.5pt, xshift=-1pt] xa1.east);
  \draw [->] (xa3) to (xa2);

  \draw [<-] (xb1) to (xb0);
  \draw [<-] (xb2) to (xb1);
  \draw [->] (xb3) to (xb2);
  \draw [->] (xb2) .. controls +(225:1) and +(315:1) .. (xb0);

  \draw [<-] (xb1) to (xa2);
  \draw [->] (xb2) to (xa3);
  \draw [->] (xa0) to (xa);
  \draw [->] (xb) to (xb3);

  \draw [->] (xb2) to (xb);
  \draw [->] (xa) to (xa1);

  \draw [->, dashed] (xb) .. controls +(0:2) and +(0:4) ..  (xa);

  \draw [->, dashed] (xa0) to (xb0);
  \draw [<-] (xa1) to (xb1);
  \draw [->, dashed] (xa3) to (xb3);

\draw[yshift=-2.5cm]
  node[below,text width=6cm] 
  {
  Figure 15a. The functions for the cluster algebra on $\Conf_3 \A_G$ after performing the first mutation.
  };

\end{tikzpicture}
\end{center}

\begin{center}
\begin{tikzpicture}[scale=2]

  \node (xa0) at (-1.5,0) {$\tcfr{}{a}{a}$};
  \node (xa1) at (-0.5,0) {$\tcfr{b}{b}{a}$};
  \node (xa2) at (0.5,0) {$\tcfr{2a}{b}{a}$};
  \node (xa3) at (1.5,0) {$\tcfr{a}{}{a}$};

  \node (xb0) at (-1.5,-1) {$\tcfr{}{b}{b}$};
  \node (xb1) at (-0.5,-1) {$\tcfr{3a}{2b}{3a}$};
  \node (xb2) at (0.5,-1) {$\tcfr{3a}{b}{b}$};
  \node (xb3) at (1.5,-1) {$\tcfr{b}{}{b}$};

  \node (xa) at (-0.5,1) {$\tcfr{a}{a}{}$};
  \node (xb) at (0.5,-2) {$\tcfr{b}{b}{}$};

  \draw [<-] (xa1) to (xa0);
  \draw [->] ([yshift=1.5pt, xshift=1pt] xa2.west) to ([yshift=1.5pt, xshift=-1pt] xa1.east);
  \draw [->] ([yshift=-1.5pt, xshift=1pt] xa2.west) to ([yshift=-1.5pt, xshift=-1pt] xa1.east);
  \draw [<-] (xa3) to (xa2);

  \draw [->] (xb1) to (xb0);
  \draw [->] (xb2) to (xb1);
  \draw [<-] (xb3) to (xb2);
  \draw [<-] (xb2) .. controls +(225:1) and +(315:1) .. (xb0);

  \draw [->] (xb1) to (xa2);
  \draw [<-] (xb2) to (xa3);
  \draw [<-] (xa0) to (xa);
  \draw [<-] (xb) to (xb3);

  \draw [<-] (xb2) to (xb);
  \draw [<-] (xa) to (xa1);

  \draw [<-, dashed] (xb) .. controls +(0:2) and +(0:4) ..  (xa);

  \draw [<-, dashed] (xa0) to (xb0);
  \draw [->] (xa1) to (xb1);
  \draw [<-, dashed] (xa3) to (xb3);

\draw[yshift=-2.5cm]
  node[below,text width=6cm] 
  {
  Figure 15b. The functions for the cluster algebra on $\Conf_3 \A_G$ after performing the third mutation.
  };

\end{tikzpicture}
\end{center}

\begin{center}
\begin{tikzpicture}[scale=2]

  \node (xa0) at (-1.5,0) {$\tcfr{}{a}{a}$};
  \node (xa1) at (-0.5,0) {$\tcfr{a}{b}{a}$};
  \node (xa2) at (0.5,0) {$\tcfr{2a}{b}{a}$};
  \node (xa3) at (1.5,0) {$\tcfr{a}{}{a}$};

  \node (xb0) at (-1.5,-1) {$\tcfr{}{b}{b}$};
  \node (xb1) at (-0.5,-1) {$\tcfr{3a}{2b}{b}$};
  \node (xb2) at (0.5,-1) {$\tcfr{3a}{b}{b}$};
  \node (xb3) at (1.5,-1) {$\tcfr{b}{}{b}$};

  \node (xa) at (-0.5,1) {$\tcfr{a}{a}{}$};
  \node (xb) at (0.5,-2) {$\tcfr{b}{b}{}$};

  \draw [->] (xa0) to (xa1);
  \draw [->] (xa1) to (xa2);
  \draw [->] (xa2) to  (xa3);

  \draw [->] (xa1) to  (xa);
  \draw [->] (xa) to (xa0);

  \draw [->] (xb0) to (xb1);
  \draw [->] (xb1) to (xb2);
  \draw [->] (xb) to (xb2);

  \draw [->] (xa1) to (xb0);
  \draw [->] (xa2) to (xb1);
  \draw [->] (xa3) to (xb2);
  \draw [->] (xb3) to (xb);

  \draw [->] (xb2) to (xb3);

  \draw [->, dashed] (xb3) to (xa3);

  \draw [->, dashed] (xb0) to (xa0);
  \draw [->] (xb1) to (xa1);
  \draw [->] (xb2) to (xa2);
  \draw [->, dashed] (xa) .. controls +(0:4) and +(0:2) ..   (xb);

\draw[yshift=-2.5cm]
  node[below,text width=6cm] 
  {
  Figure 15c. The functions for the cluster algebra on $\Conf_3 \A_G$ after performing the fourth and final mutation.
  };

\end{tikzpicture}
\end{center}

Note that the sequence of mutations associated to a transposition results in a seed isomorphic to the inital one, except with all the arrows \emph{reversed}.

\begin{rmk} In order to show that the $S_3$ symmetries we have realized are compatible with the descriptions given in Section \ref{S_3}, one should check that the sequence of mutations corresponding to $(13)(23)$ realizes the cyclic shift $T$ and that the sequence of mutations corresponding to $(13)(23)(13)$ realizes the transposition $(12)$ as described there. We discuss the latter in the next section.
\end{rmk}

\subsection{The third transposition, Langlands duality}

Let us first observe that the third transposition $(12) \in S_3$ can be realized by the composition of transpositions $(13)(23)(13)$. Thus the action of this transposition on $\Conf_3 \A_G$ can be realized via the sequence of mutations

\begin{equation} \label{23}
\begin{gathered}
x_{a2}, x_{a1}, x_{b1}, x_{a2} \\
x_{b1}, x_{b2}, x_{a2}, x_{b1} \\
x_{a2}, x_{a1}, x_{b1}, x_{a2} 
\end{gathered}
\end{equation}

On the other hand, recall that the third transposition relates the clusters attached to the reduced words $s_bs_as_bs_as_bs_a$ and $s_as_bs_as_bs_as_b$. Thus it can be realized as a sequence of mutations using the work of \cite{BFZ}, and \cite{BZ}, where this sequence first appeared. Thus we have given an alternative proof of the following theorem:

\begin{theorem} \cite{BZ} There exists a sequence of mutations relating the reduced words $s_bs_as_bs_as_bs_a$ and $s_as_bs_as_bs_as_b$.
\end{theorem}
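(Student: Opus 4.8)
The plan is to deduce the statement from the $S_3$-symmetry sequences established above, rather than re-running the derivation of \cite{BZ}; note that the assertion is already a special case of Theorem~\ref{reduced words}, so what is really wanted is a transparent explicit sequence of mutations. First I would observe that $s_as_bs_as_bs_as_b$ is the reversal of the word $s_bs_as_bs_as_bs_a$. By the discussion in Section~\ref{S_3}, the seed our construction attaches to the reversed word is obtained from the one attached to $s_bs_as_bs_as_bs_a$ (Figures~12 and~13) by applying the transposition $\tau$ interchanging the flags $A_1$ and $A_2$ and reversing every arrow of the quiver. Hence it suffices to realize $\sigma_{(12)}\in S_3$ — which carries exactly this effect on seeds — by a sequence of mutations, and then to note that the sequence restricts to $B^-$.

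For the main step I would use the identity $\sigma_{(12)}=\sigma_{(13)}\sigma_{(23)}\sigma_{(13)}$ and concatenate the two four-step sequences produced above, obtaining the twelve mutations
\[
x_{a2},\,x_{a1},\,x_{b1},\,x_{a2},\ \ x_{b1},\,x_{b2},\,x_{a2},\,x_{b1},\ \ x_{a2},\,x_{a1},\,x_{b1},\,x_{a2}.
\]
Running this on the seed of Figure~13 and tracking the quiver through Figures~14a--c and 15a--c (with the flags relabelled as dictated by the composition at the intermediate stages), one must check that the outcome is the seed of Figure~13 with every arrow reversed and with each function replaced by its $\tau^*$-pullback, up to the signs inherent in $\tau$ being a quasi-homomorphism. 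As recorded after Figure~15c, each transposition sequence already reverses all arrows, so the composite of three of them does the same. By the description in Section~\ref{S_3}, this reversed-arrow seed is precisely the one attached to $s_as_bs_as_bs_as_b$, so the two reduced words are related by mutations on $\Conf_3\A_G$.

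It remains to pass from $\Conf_3\A_G$ to $B^-$. All twelve mutations above are performed at the face vertices $x_{a1},x_{a2},x_{b1},x_{b2}$, each of which is a vertex of the eight-vertex quiver for $B^-_{G}$ of Figure~8; the third-edge vertices $x_a,x_b$ are never mutated. Since mutation of a seed at an index restricts to mutation of any subseed containing that index (the mutated entries between indices of the subset depend only on entries within the subset), the same list of mutations relates the $B^-$-seed of $s_bs_as_bs_as_bs_a$ to the $B^-$-seed of $s_as_bs_as_bs_as_b$, which is the assertion of the theorem.

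The one genuinely laborious point is the verification in the second step: that after the twelve mutations the exchange matrix is \emph{exactly} the one our construction assigns to $s_as_bs_as_bs_as_b$, not merely abstractly isomorphic to it, and that the cluster functions match under $\tau$. This is the quiver bookkeeping displayed in Figures~14 and~15, carried out three times in the appropriate order; the delicate signs on the $\tau^*f_i$ do not affect the combinatorial seed and so are irrelevant to the statement as phrased. This bookkeeping is what replaces the more intricate computations of \cite{BZ} and \cite{FG3}.
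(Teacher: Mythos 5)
Your proposal is correct and follows essentially the same route as the paper: the paper also realizes $(12)=(13)(23)(13)$ by concatenating the two explicit four-mutation sequences into the same twelve-step list $x_{a2}, x_{a1}, x_{b1}, x_{a2},\ x_{b1}, x_{b2}, x_{a2}, x_{b1},\ x_{a2}, x_{a1}, x_{b1}, x_{a2}$, and identifies the result with the seed of the reversed word via the $S_3$ discussion. Your extra remarks (that all mutations occur at face vertices so the sequence restricts to the $B^-$ subquiver, and that the signs from the quasi-homomorphism do not affect the combinatorial seed) are consistent with, and slightly more explicit than, the paper's presentation.
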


The same result was later rederived in \cite{FG3}. In all both \cite{BZ} and \cite{FG3}, the computation of the sequence comes down to an involved, and not very illuminating, computation. Our analysis, then, gives a more conceptual proof of the existence of such a sequence of mutations: we relate the sequence to the transposition $(12) \in S_3$ and break down the sequence of mutations into three stages corresponding to the decomposition $(12)=(13)(23)(13)$.

This suggests, then, that the cluster algebra for  $\Conf_3 \A_G$ is in some sense \emph{more fundamental} than the one for $B^-_G$. Note that the transpositions $(13)$ and $(23)$ take us outside the world of reduced word decompositions in order to prove a fundamental fact about relating reduced word decompositions.

An analogous result is also true in the case of where $G$ has type $B_2=C_2$, i.e., when our group is $Spin_5=Sp_4$. As explained in \cite{Le}, the transpositions $(13)$ and $(23)$ for the group $Sp_4$ each consist of one mutation. They compose to give the transposition $(12)=(13)(23)(13)$ in a sequence of three mutations. This sequence of mutations relates the reduced words $s_1s_2s_1s_2$ and $s_2s_1s_2s_1$.

Finally, we wish to say some words about Langlands duality. The group $G_2$ is self-dual. The duality interchanges the simple roots $\alpha$ and $\beta$. This manifests itself in a symmetry for the quiver we have constructed for $\Conf_3 \A_G$: if we rotate the quiver by $180$ degrees, switch the coloring of all the vertices, and reverse all the arrows, we obtain the same quiver again.

Moreover, this symmetry is stable under mutation. Let us be more precise. Let $L$ be the transformation of the quiver which reverses all arrows, switches the coloring of all vertices, and interchanges the following pairs of vertices of the quiver:
$$x_{a} \longleftrightarrow x_{b}$$
$$x_{a0} \longleftrightarrow x_{b3}$$
$$x_{a1} \longleftrightarrow x_{b2}$$
$$x_{a2} \longleftrightarrow x_{b1}$$
$$x_{a3} \longleftrightarrow x_{b0}$$
Then consider any sequence of mutations, $x_{i_1}, \dots, x_{i_k}$. Then mutating $x_{i_1}, \dots, x_{i_k}$ and then applying $L$ gives the same result as applying $L$ and then mutating $L(x_{i_1}), \dots, L(x_{i_k})$.

The reason this happens is that the cluster algebra on $\Conf_3 \A_{G}$ is Langlands dual to itself:

\begin{definition} \cite{FG2} Two seeds that have the same set of vertices $I$ and the same set of frozen vertices $I_0$ are said to be {\em Langlands dual} if they have $B$-matrices $b_{ij}$ and $b^{\vee}_{ij}$ and multipliers $d_i$ and $d^{\vee}_i$ where
$$d_i=(d^{\vee}_i)^{-1}D,$$
$$b_{ij}d_j=-b^{\vee}_{ij}d^{\vee}_j,$$
for some rational number $D$.
\end{definition}

\begin{rmk} Note that the multipliers $d_i$ for a cluster algebra are determined only up to simultaneous scaling by a rational number. Conventions sometimes differ on how to specify the values for the $d_i$. This is one reason the rational number $D$ appears in the above definition.
\end{rmk}

In other words, Langlands duality on seeds involves switching colors of vertices and reversing all arrows. This procedure applied to $\Conf_3 \A_{G}$ gives the same seed with the vertices rearranged. Amalgamation gives the same result for $\Conf_m \A_{G}$.

Up until now, we have only discussed Langlands duality on the level of seeds. Let us discuss what it does on the level of functions, or cluster variables.

When two seeds are Langlands dual, there is a close relationship between the resulting cluster algebras. Suppose that $(I, I_0, b_{ij}, d_i)$ and $(I, I_0, b^{\vee}_{ij}, d^{\vee}_i)$ are Langlands dual seeds. Let the cluster variables for the initial seeds be $f_1, \dots, f_n$ and $f^{\vee}_1, \dots, f^{\vee}_n$, respectively. These cluster variables are naturally in bijection. Then if we mutate $f_k$ to obtain the new cluster variable $f_k'$, we can do the same to $f^{\vee}_k$ to get $(f'_k)^{\vee}$ and then match $f_k'$ and $(f'_k)^{\vee}$. Continuing in this manner, one conjecturally gets a bijection between all the cluster variables for the Langlands dual seeds.

We now explain the relationship between the functions $f_k'$ and $(f'_k)^{\vee}$. We have an isomorphism between the weight spaces and the coweight spaces for the group $G$ given by the Killing form, which takes
$$\alpha \rightarrow \alpha^{\vee} $$
$$\beta \rightarrow 3\beta^{\vee} $$

On the other hand, the isomorphism between $G$ and its Langlands dual means that $\alpha^{\vee}=\beta$ and $\beta^{\vee}=\alpha$. Composing these gives an isomorphism $L$ from the weight space for $G$ to itself which maps (here we describe the map on fundamental weights instead of simple roots)
$$a \rightarrow b$$
$$b \rightarrow 3a$$

We have the following observation:

\begin{observation} Suppose that we have a cluster variable
$$f \in [V_{\lambda} \otimes V_{\mu} \otimes V_{\nu}]^{G} \subset \mathcal{O}(\Conf_3 \A_{G}).$$
Then if $f$ is associated to a black vertex and $f^{\vee}$ is associated to a white vertex, then
$$f^{\vee} \in [V_{L(\lambda)} \otimes V_{L(\mu)} \otimes V_{L(\nu)}]^{G} \subset \mathcal{O}(\Conf_3 \A_{G})$$
while if $f$ is associated to a white vertex and $f^{\vee}$ is associated to a black vertex
$$f^{\vee} \in [V_{L(\lambda)/3} \otimes V_{L(\mu)/3} \otimes V_{L(\nu)/3}]^{G} \subset \mathcal{O}(\Conf_3 \A_{G}).$$
\end{observation}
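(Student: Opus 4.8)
The plan is to prove the equivalent statement about $\Lambda^3$-gradings of cluster variables by induction along mutation sequences, using the self-duality of the seed for $\Conf_3\A_G$ recorded above.

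First I would make the bijection $f\leftrightarrow f^\vee$ explicit. Writing $L$ also for the relabelling of vertices $x_a\leftrightarrow x_b$, $x_{ai}\leftrightarrow x_{b(3-i)}$ — which is simultaneously the $180^\circ$ rotation of the quiver in Figure~13, the reversal of all arrows, and the switch of colours — the self-duality asserts that $L$ carries the seed for $\Conf_3\A_G$ to its Langlands dual, and a direct inspection of Figure~13 verifies this and pins down the constant $D=3$ (the squared root-length ratio) and the fact that $L$ interchanges the multipliers $1$ and $3$. Under this identification the Langlands-dual variable $f^\vee$ at a vertex $i$ is the variable at $L(i)$, so $f$ at a black vertex forces $f^\vee$ at a white one and conversely. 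On flags, $L$ also induces the transposition $A_1\leftrightarrow A_2$ (it exchanges the two edges of the triangle meeting at $A_3$), so the Observation should be read with the weight map $L$ of the paper ($a\mapsto b$, $b\mapsto 3a$, arising from the Killing form composed with the self-duality) combined with this transposition $\tau$. Setting $\Psi:=\tau\circ L^{\otimes 3}$ on $\Lambda^3$ one has $L^2=3\cdot\mathrm{id}$ and hence $\Psi^2=3\cdot\mathrm{id}$; the claim becomes $\mathrm{wt}(f^\vee)=\Psi(\mathrm{wt}(f))$ when $f$ is at a black vertex and $\mathrm{wt}(f^\vee)=\tfrac13\Psi(\mathrm{wt}(f))$ when $f$ is at a white vertex, and since $\Psi^2=3\cdot\mathrm{id}$ these two cases are equivalent, so it suffices to treat the black-vertex case.

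Next I would set up the induction. By Conjecture~\ref{minorweight}, which holds in type $G_2$ by the construction of Section~5.1, every cluster variable of $\mathcal O(\Conf_3\A_G)$ is homogeneous for the $\Lambda^3$-grading; therefore in each exchange relation $A_kA_k'=M^++M^-$ the monomials $M^\pm$ carry the same weight and
\[
\mathrm{wt}(A_k')=-\mathrm{wt}(A_k)+\sum_{b_{kj}>0}b_{kj}\,\mathrm{wt}(A_j)=-\mathrm{wt}(A_k)+\sum_{b_{kj}<0}(-b_{kj})\,\mathrm{wt}(A_j).
\]
Every cluster variable is reached from the initial seed by a finite mutation sequence, and $L$ being a symmetry means that running a sequence and then relabelling by $L$ agrees, position by position, with running the $L$-relabelled sequence; thus $f^\vee$ is produced by the relabelled mutations. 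The base case is the finite check, read straight off Figure~13, that $\mathrm{wt}(f_{L(i)})=\Psi^{\pm1}(\mathrm{wt}(f_i))$ on the initial cluster (for instance $x_{a0}$ has weight $(a,0,a)$ and $x_{b3}=L(x_{a0})$ has weight $(0,b,b)=\Psi(a,0,a)$; $x_{a1}$ has weight $(b,a,a)$ and $x_{b2}$ has weight $(b,3a,b)=\Psi(b,a,a)$; the white vertices are similar with $\Psi^{-1}$). For the inductive step one mutates at a vertex $k$ and at $L(k)$ simultaneously, substitutes the weight-mutation formula on both sides, and uses: linearity of $\Psi$; the homogeneity identity $\sum_{b_{kj}>0}b_{kj}\mathrm{wt}(A_j)=\sum_{b_{kj}<0}(-b_{kj})\mathrm{wt}(A_j)$ (which matches the two monomials once the arrow reversal has exchanged them); and the skew-symmetrizer relation $b_{ij}d_j=-b_{ji}d_i$ together with the explicit $B\leftrightarrow B^\vee$ dictionary coming from the self-duality, which is exactly what turns the multiplier asymmetry ($d=1$ versus $d=3$) into the factor $3$ separating the black and white cases.

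I expect the main obstacle to be precisely this last piece of bookkeeping: making the three separate occurrences of the number $3$ — as the squared root-length ratio $D$, as the eigenvalue $L^2=3$, and as the white-vertex multiplier — cancel in the right way in the inductive step, and in particular checking that the $L$-relabelled exchange relation reproduces $f^\vee$ exactly rather than $f^\vee$ times a Laurent monomial in the frozen variables (the relabelling reverses arrows, hence is only a quasi-homomorphism, and one must see that the ambiguous frozen monomial is weight-zero or absent). The remaining ingredients — the self-duality of Figure~13, the base case, and the descent to $\Conf_m\A_G$ by amalgamation, which commutes with the relabelling — are routine finite verifications.
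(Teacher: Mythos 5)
Your proposal is correct and takes essentially the same route as the paper: the paper's entire justification is that the statement ``is clearly true in the initial cluster'' and ``remains true under mutation'' granting homogeneity of all cluster variables, which is precisely the induction you carry out (base case read off Figure~13, inductive step via the weight form of the exchange relation, conditional on every cluster variable lying in a single graded piece). Your write-up is in fact more careful than the text --- in particular you correctly observe that the base case only matches once the weight map $L$ is combined with the transposition $A_1\leftrightarrow A_2$ induced by the vertex relabelling, and your bookkeeping with $d_j/d_k$ and $\Psi^{\pm1}=\Psi$ or $\Psi/3$ is exactly the cancellation of the factors of $3$ that makes the inductive step close --- but the underlying argument is the same.
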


This is clearly true in the initial cluster, and as long as all the cluster variables are functions on $\Conf_3 \A_{G}$ and therefore given by tensor invariants, and not just rational functions on $\Conf_3 \A_{G}$ (as we expect, but do not know how to prove), it is easy to check that the above observation remains true under mutation. Certainly, in all the clusters we consider in this paper this will be the case. For example, the Langlands dual to the seed in Figure 13 is given by applying the transposition $(12)$ to the original seed for $\Conf_3 \A_{G}$, giving us a third way of looking at this transposition!

One last observation:

\begin{observation} The sequence of mutations for the transpositions $(13)$ and $(23)$ are related by Langlands duality. Moreover, the sequence of mutations of the flip (discussed in the next section) and the reverse of the sequence of mutations for a flip are related by Langlands duality. An analogous statement is true for the group $Sp_4$.
\end{observation}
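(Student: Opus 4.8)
The plan is to leverage the compatibility of the Langlands involution $L$ with mutation recorded just above: performing mutations at $x_{i_1},\dots,x_{i_k}$ and then applying $L$ produces the same seed as first applying $L$ and then mutating at $L(x_{i_1}),\dots,L(x_{i_k})$. So to show that two mutation sequences are ``related by Langlands duality'' it is enough to check, term by term, that the vertex relabelling of $L$ carries the ordered list of mutated vertices of the one sequence to that of the other, and then to use that the relevant initial seed (and all the intermediate seeds) are Langlands self-dual, so that the two outputs are genuinely Langlands dual to one another.

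For the two transpositions this is a one-line check. The sequence realizing $(13)$ mutates, stage by stage, at $x_{a2}$; then $x_{a1},\, x_{b1}$; then $x_{a2}$. Since $L$ interchanges $x_{a1}\leftrightarrow x_{b2}$ and $x_{a2}\leftrightarrow x_{b1}$, applying $L$ to this list turns it into $x_{b1}$; then $x_{b2},\, x_{a2}$; then $x_{b1}$, which is exactly the sequence realizing $(23)$. Because the seed of Figure 13 is Langlands self-dual (the content of the Observation above) and $L$ is an involution on the quiver, compatibility of $L$ with mutation then shows that the $(13)$-seed and the $(23)$-seed---each Figure 13 with all arrows reversed---are exchanged by $L$, as are all the intermediate seeds of Figures 14a--14c and 15a--15c. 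This dovetails with the earlier remark that the Langlands dual of the seed of Figure 13 is the one obtained by the transposition $(12)$: under Langlands duality the factorization $(12)=(13)(23)(13)$ turns into $(23)(13)(23)$, the corresponding mutation sequences being identified by the same term-by-term check.

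For the flip I would run the identical argument on $\Conf_4 \A_G$, whose quiver is again Langlands self-dual by amalgamation. The Langlands involution $L_4$ of the $\Conf_4 \A_G$ quiver is obtained by amalgamating the two triangle-level copies of $L$ and composing with the symmetry of the square interchanging its two diagonals (hence its two triangulations). Once the flip sequence is written out in the next section, the task is to verify term by term that $L_4$ sends its ordered list of mutated vertices to that of the \emph{reverse} flip sequence; compatibility of $L_4$ with mutation then yields the claim, provided the intermediate seeds consist of honest functions on $\Conf_4 \A_G$ given by tensor invariants, as they do here. The $Sp_4$ statement is handled the same way: there $(13)$ and $(23)$ are single mutations at vertices lying on the two nodes of the $B_2=C_2$ diagram that Langlands duality swaps, and the flip is treated by the same amalgamation-plus-diagonal-swap description of $L_4$.

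The two comparisons for the transpositions are routine. I expect the main obstacle to be the flip: making the description of $L_4$ fully precise---how the two triangle-level involutions amalgamate, and how the diagonal swap is incorporated---and then confirming that $L_4$ really does reverse the flip word. This should be pure bookkeeping rather than a conceptual hurdle, but it is where the actual work lies.
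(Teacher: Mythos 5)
The paper gives no argument for this Observation beyond inspection of the displayed data, so there is no ``paper proof'' to match against; judged on its own terms, your proposal is right for the transpositions and has a concrete gap for the flip. For the transpositions your check is exactly the intended one and is correct: $L$ sends the list $x_{a2};\ x_{a1},x_{b1};\ x_{a2}$ to $x_{b1};\ x_{b2},x_{a2};\ x_{b1}$, and since the seed of Figure 13 is Langlands self-dual under $L$ and $L$ commutes with mutation, the two sequences are exchanged by duality. (Minor quibble: the intermediate seeds are not themselves self-dual, as you assert in your general setup; rather the seed after $j$ steps of $(13)$ is dual to the seed after $j$ steps of $(23)$. This does not affect the conclusion.)

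The flip part is where the proposal breaks down as written. First, the two triangle-level copies of $L$ do not amalgamate to a map of the $\Conf_4\A_G$ quiver: on the glued edge the right-hand triangle's $L$ sends $x_{0a}$ to $x_{3b}$ while the left-hand triangle's $L$ sends it to $x_{-3b}$, so there is no well-defined ``amalgamated $L$,'' and composing with a symmetry of the square does not repair this. Second, the premise that the $\Conf_4\A_G$ quiver is Langlands self-dual is false: reversing all arrows and swapping colors in Figure 16 yields the quiver of the \emph{other} triangulation, namely Figure 18f, under the relabeling $x_{ia}\leftrightarrow x_{ib}$ (same horizontal index $i$), $y_{\pm a}\mapsto y_{\mp b}$, $y_{\pm b}\mapsto y_{\mp a}$; for instance the dual of $x_{1a}\to x_{0a}$ is $x_{0b}\to x_{1b}$ and the dual of $x_{0b}\to x_{1a}$ is $x_{1b}\to x_{0a}$, both arrows of Figure 18f. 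That is the correct duality map, and with it the deferred term-by-term check is immediate: each stage of the flip word \ref{flip} is invariant under $i\mapsto -i$, and the $a/b$ swap exchanges stages $1\leftrightarrow 6$, $2\leftrightarrow 5$, $3\leftrightarrow 4$, i.e.\ it carries the flip word to its reverse, whence compatibility of duality with mutation gives the claim. So the overall strategy is sound and the missing verification is easy, but the duality map you propose to verify it against is not well defined, and the self-duality it rests on does not hold; what is true instead is that duality exchanges the seeds of the two triangulations. The $Sp_4$ case is as you describe.
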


\subsection{The sequence of mutations for a flip}

In this section, we will give a sequence of mutations that relates two of the clusters coming from different triangulations of the $4$-gon. Combined with the previous section, this allows us to connect by mutations all $72$ different clusters we have constructed for $\Conf_4 \A_{G}$ (There are $2$ possible triangulations, and $6$ clusters for each triangle, giving $6 \cdot 6 \cdot 2$ total clusters.).

Given a configuration $(A,B,C,D) \in \Conf_4 \A_{G}$, we will give a sequence of mutations that relates a cluster coming from the triangulation $ABC, ACD$ to a cluster coming from the triangulation $ABD, BCD$. Refer to Figure 6 for a diagram of the configuration $(A,B,C,D)$.

We will need to relabel the quiver with vertices $x_{ij}$, $y_k$, with $j=a, b$, $-n \leq i \leq n$, and $k =\pm a, \pm b$. The quiver we will start with is as in Figure 16.

\begin{center}
\begin{tikzpicture}[scale=2.4]

  \node (x0a) at (0,0) {\Large $\ontop{x_{0a}}{\bullet}$};
  \node (x1a) at (1,0) {\Large $\ontop{x_{1a}}{\bullet}$};
  \node (x2a) at (2,0) {\Large $\ontop{x_{2a}}{\bullet}$};
  \node (x3a) at (3,0) {\Large $\ontop{x_{3a}}{\bullet}$};
  \node (x-1a) at (-1,0) {\Large $\ontop{x_{-1a}}{\bullet}$};
  \node (x-2a) at (-2,0) {\Large $\ontop{x_{-2a}}{\bullet}$};
  \node (x-3a) at (-3,0) {\Large $\ontop{x_{-3a}}{\bullet}$};

  \node (x0b) at (0,-1) {\Large $\ontop{x_{0b}}{\circ}$};
  \node (x1b) at (1,-1) {\Large $\ontop{x_{1b}}{\circ}$};
  \node (x2b) at (2,-1) {\Large $\ontop{x_{2b}}{\circ}$};
  \node (x3b) at (3,-1) {\Large $\ontop{x_{3b}}{\circ}$};
  \node (x-1b) at (-1,-1) {\Large $\ontop{x_{-1b}}{\circ}$};
  \node (x-2b) at (-2,-1) {\Large $\ontop{x_{-2b}}{\circ}$};
  \node (x-3b) at (-3,-1) {\Large $\ontop{x_{-3b}}{\circ}$};

  \node (ya) at (0.5,1) {\Large $\ontop{y_{a}}{\bullet}$};
  \node (yb) at (2.5,-2) {\Large $\ontop{y_{b}}{\circ}$};
  \node (y-a) at (-0.5,1) {\Large $\ontop{y_{-a}}{\bullet}$};
  \node (y-b) at (-2.5,-2) {\Large $\ontop{y_{-b}}{\circ}$};

  \draw [->] (x1a) to (x0a);
  \draw [->] (x2a) to (x1a);
  \draw [->] (x3a) to (x2a);

  \draw [->] (x1b) to (x0b);
  \draw [->] (x2b) to (x1b);
  \draw [->] (x3b) to (x2b);

  \draw [->] (x0b) to (x1a);
  \draw [->] (x1b) to (x2a);
  \draw [->] (x2b) to (x3a);
  \draw [->] (x0a) to (ya);
  \draw [->] (yb) to (x3b);

  \draw [->] (x2b) to (yb);
  \draw [->] (ya) to (x1a);


  \draw [->] (x0a) to (x0b);
  \draw [->] (x1a) to (x1b);
  \draw [->] (x2a) to (x2b);
  \draw [->, dashed] (x3a) to (x3b);

  \draw [->] (x-1a) to (x0a);
  \draw [->] (x-2a) to (x-1a);
  \draw [->] (x-3a) to (x-2a);

  \draw [->] (x-1b) to (x0b);
  \draw [->] (x-2b) to (x-1b);
  \draw [->] (x-3b) to (x-2b);

  \draw [->] (x0b) to (x-1a);
  \draw [->] (x-1b) to (x-2a);
  \draw [->] (x-2b) to (x-3a);
  \draw [->] (x0a) to (y-a);
  \draw [->] (y-b) to (x-3b);

  \draw [->] (x-2b) to (y-b);
  \draw [->] (y-a) to (x-1a);


  \draw [->] (x-1a) to (x-1b);
  \draw [->] (x-2a) to (x-2b);
  \draw [->, dashed] (x-3a) to (x-3b);

\draw[yshift=-1.85cm]
  node[below,text width=6cm] 
  {
  Figure 16. The quiver for the cluster algebra on $\Conf_4 \A_{G}$.
  };

\end{tikzpicture}
\end{center}

We have removed the dashed arrows between $y_b$ and $y_a$ and between $y_{-b}$ and $y_{-a}$ for simplicity. These arrows neither change nor play any role during our sequence of mutations. We write down the functions associated to the vertices in the quiver in Figure 17.

\begin{center}
\begin{tikzpicture}[scale=2.4]

  \node (x0a) at (0,0) {$\dud{a}{a}$};
  \node (x1a) at (1,0) {$\tcfr{a}{b}{a}$};
  \node (x2a) at (2,0) {$\tcfr{a}{b}{2a}$};
  \node (x3a) at (3,0) {$\tcfr{a}{}{a}$};
  \node (x-1a) at (-1,0) {$\tcfl{b}{a}{a}$};
  \node (x-2a) at (-2,0) {$\tcfl{b}{2a}{a}$};
  \node (x-3a) at (-3,0) {$\tcfl{}{a}{a}$};

  \node (x0b) at (0,-1) {$\dud{b}{b}$};
  \node (x1b) at (1,-1) {$\tcfr{b}{2b}{3a}$};
  \node (x2b) at (2,-1) {$\tcfr{b}{b}{3a}$};
  \node (x3b) at (3,-1) {$\tcfr{b}{}{b}$};
  \node (x-1b) at (-1,-1) {$\tcfl{2b}{3a}{b}$};
  \node (x-2b) at (-2,-1) {$\tcfl{b}{3a}{b}$};
  \node (x-3b) at (-3,-1) {$\tcfl{}{b}{b}$};

  \node (ya) at (0.5,1) {$\tcfr{}{a}{a}$};
  \node (yb) at (2.5,-2) {$\tcfr{}{b}{b}$};
  \node (y-a) at (-0.5,1) {$\tcfl{a}{a}{}$};
  \node (y-b) at (-2.5,-2) {$\tcfl{b}{b}{}$};

  \draw [->] (x1a) to (x0a);
  \draw [->] (x2a) to (x1a);
  \draw [->] (x3a) to (x2a);

  \draw [->] (x1b) to (x0b);
  \draw [->] (x2b) to (x1b);
  \draw [->] (x3b) to (x2b);

  \draw [->] (x0b) to (x1a);
  \draw [->] (x1b) to (x2a);
  \draw [->] (x2b) to (x3a);
  \draw [->] (x0a) to (ya);
  \draw [->] (yb) to (x3b);

  \draw [->] (x2b) to (yb);
  \draw [->] (ya) to (x1a);

  \draw [->] (x0a) to (x0b);
  \draw [->] (x1a) to (x1b);
  \draw [->] (x2a) to (x2b);
  \draw [->, dashed] (x3a) to (x3b);

  \draw [->] (x-1a) to (x0a);
  \draw [->] (x-2a) to (x-1a);
  \draw [->] (x-3a) to (x-2a);

  \draw [->] (x-1b) to (x0b);
  \draw [->] (x-2b) to (x-1b);
  \draw [->] (x-3b) to (x-2b);

  \draw [->] (x0b) to (x-1a);
  \draw [->] (x-1b) to (x-2a);
  \draw [->] (x-2b) to (x-3a);
  \draw [->] (x0a) to (y-a);
  \draw [->] (y-b) to (x-3b);

  \draw [->] (x-2b) to (y-b);
  \draw [->] (y-a) to (x-1a);

  \draw [->] (x-1a) to (x-1b);
  \draw [->] (x-2a) to (x-2b);
  \draw [->, dashed] (x-3a) to (x-3b);

\draw[yshift=-1.85cm]
  node[below,text width=6cm] 
  {
  Figure 17. The functions for the cluster algebra on $\Conf_4 \A_{G}$.
  };

\end{tikzpicture}
\end{center}

The sequence of mutations for the flip of a triangulation is as follows:

\begin{equation} \label{flip}
\begin{gathered}
x_{0a}, \\
x_{-1a}, x_{0b}, x_{1a}, \\
x_{-2a}, x_{-1b}, x_{0a}, x_{1b}, x_{2a}, \\
x_{-2b}, x_{-1a}, x_{0b}, x_{1a}, x_{2b}, \\
x_{-1b}, x_{0a}, x_{1b}, \\
x_{0b}, \\
 \end{gathered}
\end{equation}

The rows above correspond to what we will call the six stages of the mutation sequence. In each stage, the vertices may be mutated in any order. We depict how the quiver and the functions change after each stage of the sequence of mutations in Figure 18.

\begin{center}
\begin{tikzpicture}[scale=2.4]

  \node (x0a) at (0,0) {$\qcf{b}{a}{b}{a}$};
  \node (x1a) at (1,0) {$\tcfr{a}{b}{a}$};
  \node (x2a) at (2,0) {$\tcfr{a}{b}{2a}$};
  \node (x3a) at (3,0) {$\tcfr{a}{}{a}$};
  \node (x-1a) at (-1,0) {$\tcfl{b}{a}{a}$};
  \node (x-2a) at (-2,0) {$\tcfl{b}{2a}{a}$};
  \node (x-3a) at (-3,0) {$\tcfl{}{a}{a}$};

  \node (x0b) at (0,-1) {$\dud{b}{b}$};
  \node (x1b) at (1,-1) {$\tcfr{b}{2b}{3a}$};
  \node (x2b) at (2,-1) {$\tcfr{b}{b}{3a}$};
  \node (x3b) at (3,-1) {$\tcfr{b}{}{b}$};
  \node (x-1b) at (-1,-1) {$\tcfl{2b}{3a}{b}$};
  \node (x-2b) at (-2,-1) {$\tcfl{b}{3a}{b}$};
  \node (x-3b) at (-3,-1) {$\tcfl{}{b}{b}$};

  \node (ya) at (-0.5,1) {$\tcfr{}{a}{a}$};
  \node (yb) at (2.5,-2) {$\tcfr{}{b}{b}$};
  \node (y-a) at (0.5,1) {$\tcfl{a}{a}{}$};
  \node (y-b) at (-2.5,-2) {$\tcfl{b}{b}{}$};

  \draw [<-] (x1a) to (x0a);
  \draw [->] (x2a) to (x1a);
  \draw [->] (x3a) to (x2a);

  \draw [->] (x1b) to (x0b);
  \draw [->] (x2b) to (x1b);
  \draw [->] (x3b) to (x2b);

  \draw [->] (x1b) to (x2a);
  \draw [->] (x2b) to (x3a);

  \draw [<-] (x0a) to (x0b);
  \draw [->] (x1a) to (x1b);
  \draw [->] (x2a) to (x2b);
  \draw [->, dashed] (x3a) to (x3b);

  \draw [<-] (x-1a) to (x0a);
  \draw [->] (x-2a) to (x-1a);
  \draw [->] (x-3a) to (x-2a);

  \draw [->] (x-1b) to (x0b);
  \draw [->] (x-2b) to (x-1b);
  \draw [->] (x-3b) to (x-2b);

  \draw [->] (x-1b) to (x-2a);
  \draw [->] (x-2b) to (x-3a);

  \draw [<-] (x0a) to (x0b);
  \draw [->] (x-1a) to (x-1b);
  \draw [->] (x-2a) to (x-2b);
  \draw [->, dashed] (x-3a) to (x-3b);

  \draw [<-] (x0a) to (y-a);
  \draw [<-] (y-a) to (x1a);

  \draw [->] (yb) to (x3b);
  \draw [->] (x2b) to (yb);

  \draw [<-] (x0a) to (ya);
  \draw [<-] (ya) to (x-1a);

  \draw [->] (y-b) to (x-3b);
  \draw [->] (x-2b) to (y-b);

\draw[yshift=-1.85cm]
  node[below,text width=6cm] 
  {
  Figure 18a. The quiver and functions for the cluster algebra on $\Conf_4 \A_{G}$ after the first stage of mutation.
  };

\end{tikzpicture}
\end{center}

\begin{center}
\begin{tikzpicture}[scale=2.4]

  \node (x0a) at (0,0) {$\qcf{b}{a}{b}{a}$};
  \node (x1a) at (1,0) {$\qcf{b}{a}{b}{2a}$};
  \node (x2a) at (2,0) {$\tcfr{a}{b}{2a}$};
  \node (x3a) at (3,0) {$\tcfr{a}{}{a}$};
  \node (x-1a) at (-1,0) {$\qcf{b}{2a}{b}{a}$};
  \node (x-2a) at (-2,0) {$\tcfl{b}{2a}{a}$};
  \node (x-3a) at (-3,0) {$\tcfl{}{a}{a}$};

  \node (x0b) at (0,-1) {$\qcf{2b}{3a}{2b}{3a}$};
  \node (x1b) at (1,-1) {$\tcfr{b}{2b}{3a}$};
  \node (x2b) at (2,-1) {$\tcfr{b}{b}{3a}$};
  \node (x3b) at (3,-1) {$\tcfr{b}{}{b}$};
  \node (x-1b) at (-1,-1) {$\tcfl{2b}{3a}{b}$};
  \node (x-2b) at (-2,-1) {$\tcfl{b}{3a}{b}$};
  \node (x-3b) at (-3,-1) {$\tcfl{}{b}{b}$};

  \node (ya) at (-1.5,1) {$\tcfr{}{a}{a}$};
  \node (yb) at (2.5,-2) {$\tcfr{}{b}{b}$};
  \node (y-a) at (1.5,1) {$\tcfl{a}{a}{}$};
  \node (y-b) at (-2.5,-2) {$\tcfl{b}{b}{}$};

  \draw [->] (x1a) to (x0a);
  \draw [<-] (x2a) to (x1a);
  \draw [->] (x3a) to (x2a);

  \draw [<-] (x1b) to (x0b);
  \draw [->] (x2b) to (x1b);
  \draw [->] (x3b) to (x2b);

  \draw [->] (x2b) to (x3a);

  \draw [->] (x0a) to (x0b);
  \draw [<-] (x1a) to (x1b);
  \draw [->] (x2a) to (x2b);
  \draw [->, dashed] (x3a) to (x3b);

  \draw [->] (x-1a) to (x0a);
  \draw [<-] (x-2a) to (x-1a);
  \draw [->] (x-3a) to (x-2a);

  \draw [<-] (x-1b) to (x0b);
  \draw [->] (x-2b) to (x-1b);
  \draw [->] (x-3b) to (x-2b);

  \draw [->] (x-2b) to (x-3a);

  \draw [->] (x0a) to (x0b);
  \draw [<-] (x-1a) to (x-1b);
  \draw [->] (x-2a) to (x-2b);
  \draw [->, dashed] (x-3a) to (x-3b);

  \draw [<-] (x1a) to (y-a);
  \draw [<-] (y-a) to (x2a);

  \draw [->] (yb) to (x3b);
  \draw [->] (x2b) to (yb);

  \draw [<-] (x-1a) to (ya);
  \draw [<-] (ya) to (x-2a);

  \draw [->] (y-b) to (x-3b);
  \draw [->] (x-2b) to (y-b);

\draw[yshift=-1.85cm]
  node[below,text width=6cm] 
  {
  Figure 18b. The quiver and functions for the cluster algebra on $\Conf_4 \A_{G}$ after the second stage of mutation.
  };

\end{tikzpicture}
\end{center}

\begin{center}
\begin{tikzpicture}[scale=3,rotate=90]

  \node (x0a) at (0,0) [rotate=90] {$\qcf{b}{2a}{b}{2a}$};
  \node (x1a) at (1,0) [rotate=90] {$\qcf{b}{a}{b}{2a}$};
  \node (x2a) at (2,0) [rotate=90] {$\tcfu{b}{a}{a}$};
  \node (x3a) at (3,0) [rotate=90] {$\tcfr{a}{}{a}$};
  \node (x-1a) at (-1,0) [rotate=90] {$\qcf{b}{2a}{b}{a}$};
  \node (x-2a) at (-2,0) [rotate=90] {$\tcfd{a}{b}{a}$};
  \node (x-3a) at (-3,0) [rotate=90] {$\tcfl{}{a}{a}$};

  \node (x0b) at (0,-1) [rotate=90] {$\qcf{2b}{3a}{2b}{3a}$};
  \node (x1b) at (1,-1) [rotate=90] {$\qcf{2b}{3a}{b}{3a}$};
  \node (x2b) at (2,-1) [rotate=90] {$\tcfr{b}{b}{3a}$};
  \node (x3b) at (3,-1) [rotate=90] {$\tcfr{b}{}{b}$};
  \node (x-1b) at (-1,-1) [rotate=90] {$\qcf{b}{3a}{2b}{3a}$};
  \node (x-2b) at (-2,-1) [rotate=90] {$\tcfl{b}{3a}{b}$};
  \node (x-3b) at (-3,-1) [rotate=90] {$\tcfl{}{b}{b}$};

  \node (ya) at (-2.5,1) [rotate=90] {$\tcfr{}{a}{a}$};
  \node (yb) at (2.5,-2) [rotate=90] {$\tcfr{}{b}{b}$};
  \node (y-a) at (2.5,1) [rotate=90] {$\tcfl{a}{a}{}$};
  \node (y-b) at (-2.5,-2) [rotate=90] {$\tcfl{b}{b}{}$};

  \draw [<-] (x1a) to (x0a);
  \draw [->] (x2a) to (x1a);
  \draw [<-] (x3a) to (x2a);

  \draw [->] (x1b) to (x0b);
  \draw [<-] (x2b) to (x1b);
  \draw [->] (x3b) to (x2b);


  \draw [<-] (x0a) to (x0b);
  \draw [->] (x1a) to (x1b);
  \draw [<-] (x2a) to (x2b);
  \draw [->, dashed] (x3a) to (x3b);

  \draw [<-] (x-1a) to (x0a);
  \draw [->] (x-2a) to (x-1a);
  \draw [<-] (x-3a) to (x-2a);

  \draw [->] (x-1b) to (x0b);
  \draw [<-] (x-2b) to (x-1b);
  \draw [->] (x-3b) to (x-2b);


  \draw [<-] (x0a) to (x0b);
  \draw [->] (x-1a) to (x-1b);
  \draw [<-] (x-2a) to (x-2b);
  \draw [->, dashed] (x-3a) to (x-3b);

  \draw [<-] (x2a) to (y-a);
  \draw [<-] (y-a) to (x3a);

  \draw [->] (yb) to (x3b);
  \draw [->] (x2b) to (yb);

  \draw [<-] (x-2a) to (ya);
  \draw [<-] (ya) to (x-3a);

  \draw [->] (y-b) to (x-3b);
  \draw [->] (x-2b) to (y-b);

\draw[yshift=-1.85cm]
  node[below,text width=6cm,rotate=90] 
  {
  Figure 18c. The quiver and functions for the cluster algebra on $\Conf_4 \A_{G}$ after the third stage of mutation.
  };

\end{tikzpicture}
\end{center}

\begin{center}
\begin{tikzpicture}[scale=3,rotate=90]

  \node (x0a) at (0,0) [rotate=90] {$\qcf{b}{2a}{b}{2a}$};
  \node (x1a) at (1,0) [rotate=90] {$\tcfu{b}{2a}{a}$};
  \node (x2a) at (2,0) [rotate=90] {$\tcfu{b}{a}{a}$};
  \node (x3a) at (3,0) [rotate=90] {$\tcfr{a}{}{a}$};
  \node (x-1a) at (-1,0) [rotate=90] {$\tcfd{a}{b}{2a}$};
  \node (x-2a) at (-2,0) [rotate=90] {$\tcfd{a}{b}{a}$};
  \node (x-3a) at (-3,0) [rotate=90] {$\tcfl{}{a}{a}$};

  \node (x0b) at (0,-1) [rotate=90] {$\qcf{b}{3a}{b}{3a}$};
  \node (x1b) at (1,-1) [rotate=90] {$\qcf{2b}{3a}{b}{3a}$};
  \node (x2b) at (2,-1) [rotate=90] {$\tcfu{2b}{b}{3a}$};
  \node (x3b) at (3,-1) [rotate=90] {$\tcfr{b}{}{b}$};
  \node (x-1b) at (-1,-1) [rotate=90] {$\qcf{b}{3a}{2b}{3a}$};
  \node (x-2b) at (-2,-1) [rotate=90] {$\tcfd{b}{2b}{3a}$};
  \node (x-3b) at (-3,-1) [rotate=90] {$\tcfl{}{b}{b}$};

  \node (ya) at (-2.5,1) [rotate=90] {$\tcfr{}{a}{a}$};
  \node (yb) at (1.5,-2) [rotate=90] {$\tcfr{}{b}{b}$};
  \node (y-a) at (2.5,1) [rotate=90] {$\tcfl{a}{a}{}$};
  \node (y-b) at (-1.5,-2) [rotate=90] {$\tcfl{b}{b}{}$};

  \draw [->] (x1a) to (x0a);
  \draw [<-] (x2a) to (x1a);
  \draw [<-] (x3a) to (x2a);

  \draw [<-] (x1b) to (x0b);
  \draw [->] (x2b) to (x1b);
  \draw [<-] (x3b) to (x2b);

  \draw [->] (x3b) to (x2a);

  \draw [->] (x0a) to (x0b);
  \draw [<-] (x1a) to (x1b);
  \draw [->] (x2a) to (x2b);
  \draw [->, dashed] (x3a) to (x3b);

  \draw [->] (x-1a) to (x0a);
  \draw [<-] (x-2a) to (x-1a);
  \draw [<-] (x-3a) to (x-2a);

  \draw [<-] (x-1b) to (x0b);
  \draw [->] (x-2b) to (x-1b);
  \draw [<-] (x-3b) to (x-2b);

  \draw [->] (x-3b) to (x-2a);

  \draw [->] (x0a) to (x0b);
  \draw [<-] (x-1a) to (x-1b);
  \draw [->] (x-2a) to (x-2b);
  \draw [->, dashed] (x-3a) to (x-3b);

  \draw [<-] (x2a) to (y-a);
  \draw [<-] (y-a) to (x3a);

  \draw [->] (yb) to (x2b);
  \draw [->] (x1b) to (yb);

  \draw [<-] (x-2a) to (ya);
  \draw [<-] (ya) to (x-3a);

  \draw [->] (y-b) to (x-2b);
  \draw [->] (x-1b) to (y-b);

\draw[yshift=-1.85cm]
  node[below,text width=6cm,rotate=90] 
  {
  Figure 18d. The quiver and functions for the cluster algebra on $\Conf_4 \A_{G}$ after the fourth stage of mutation.
  };

\end{tikzpicture}
\end{center}

\begin{center}
\begin{tikzpicture}[scale=2.4]

  \node (x0a) at (0,0) {$\dlr{a}{a}$};
  \node (x1a) at (1,0) {$\tcfu{b}{2a}{a}$};
  \node (x2a) at (2,0) {$\tcfu{b}{a}{a}$};
  \node (x3a) at (3,0) {$\tcfr{a}{}{a}$};
  \node (x-1a) at (-1,0) {$\tcfd{a}{b}{2a}$};
  \node (x-2a) at (-2,0) {$\tcfd{a}{b}{a}$};
  \node (x-3a) at (-3,0) {$\tcfl{}{a}{a}$};

  \node (x0b) at (0,-1) {$\qcf{b}{3a}{b}{3a}$};
  \node (x1b) at (1,-1) {$\tcfu{b}{b}{3a}$};
  \node (x2b) at (2,-1) {$\tcfu{2b}{b}{3a}$};
  \node (x3b) at (3,-1) {$\tcfr{b}{}{b}$};
  \node (x-1b) at (-1,-1) {$\tcfd{b}{b}{3a}$};
  \node (x-2b) at (-2,-1) {$\tcfd{b}{2b}{3a}$};
  \node (x-3b) at (-3,-1) {$\tcfl{}{b}{b}$};

  \node (ya) at (-2.5,1) {$\tcfr{}{a}{a}$};
  \node (yb) at (0.5,-2) {$\tcfr{}{b}{b}$};
  \node (y-a) at (2.5,1) {$\tcfl{a}{a}{}$};
  \node (y-b) at (-0.5,-2) {$\tcfl{b}{b}{}$};

  \draw [<-] (x1a) to (x0a);
  \draw [<-] (x2a) to (x1a);
  \draw [<-] (x3a) to (x2a);

  \draw [->] (x1b) to (x0b);
  \draw [<-] (x2b) to (x1b);
  \draw [<-] (x3b) to (x2b);

  \draw [->] (x2b) to (x1a);
  \draw [->] (x3b) to (x2a);

  \draw [<-] (x0a) to (x0b);
  \draw [->] (x1a) to (x1b);
  \draw [->] (x2a) to (x2b);
  \draw [->, dashed] (x3a) to (x3b);

  \draw [<-] (x-1a) to (x0a);
  \draw [<-] (x-2a) to (x-1a);
  \draw [<-] (x-3a) to (x-2a);

  \draw [->] (x-1b) to (x0b);
  \draw [<-] (x-2b) to (x-1b);
  \draw [<-] (x-3b) to (x-2b);

  \draw [->] (x-2b) to (x-1a);
  \draw [->] (x-3b) to (x-2a);

  \draw [<-] (x0a) to (x0b);
  \draw [->] (x-1a) to (x-1b);
  \draw [->] (x-2a) to (x-2b);
  \draw [->, dashed] (x-3a) to (x-3b);

  \draw [<-] (x2a) to (y-a);
  \draw [<-] (y-a) to (x3a);

  \draw [->] (yb) to (x1b);
  \draw [->] (x0b) to (yb);

  \draw [<-] (x-2a) to (ya);
  \draw [<-] (ya) to (x-3a);

  \draw [->] (y-b) to (x-1b);
  \draw [->] (x0b) to (y-b);

\draw[yshift=-2.3cm]
  node[below,text width=9cm] 
  {
  Figure 18e. The quiver and functions for the cluster algebra on $\Conf_4 \A_{G}$ after the fifth stage of mutation.
  };

\end{tikzpicture}
\end{center}

\begin{center}
\begin{tikzpicture}[scale=2.4]

  \node (x0a) at (0,0) {$\dlr{a}{a}$};
  \node (x1a) at (1,0) {$\tcfu{b}{2a}{a}$};
  \node (x2a) at (2,0) {$\tcfu{b}{a}{a}$};
  \node (x3a) at (3,0) {$\tcfr{a}{}{a}$};
  \node (x-1a) at (-1,0) {$\tcfd{a}{b}{2a}$};
  \node (x-2a) at (-2,0) {$\tcfd{a}{b}{a}$};
  \node (x-3a) at (-3,0) {$\tcfl{}{a}{a}$};

  \node (x0b) at (0,-1) {$\dlr{b}{b}$};
  \node (x1b) at (1,-1) {$\tcfu{b}{b}{3a}$};
  \node (x2b) at (2,-1) {$\tcfu{2b}{b}{3a}$};
  \node (x3b) at (3,-1) {$\tcfr{b}{}{b}$};
  \node (x-1b) at (-1,-1) {$\tcfd{b}{b}{3a}$};
  \node (x-2b) at (-2,-1) {$\tcfd{b}{2b}{3a}$};
  \node (x-3b) at (-3,-1) {$\tcfl{}{b}{b}$};

  \node (ya) at (-2.5,1) {$\tcfr{}{a}{a}$};
  \node (yb) at (-0.5,-2) {$\tcfr{}{b}{b}$};
  \node (y-a) at (2.5,1) {$\tcfl{a}{a}{}$};
  \node (y-b) at (0.5,-2) {$\tcfl{b}{b}{}$};

  \draw [<-] (x1a) to (x0a);
  \draw [<-] (x2a) to (x1a);
  \draw [<-] (x3a) to (x2a);

  \draw [<-] (x1b) to (x0b);
  \draw [<-] (x2b) to (x1b);
  \draw [<-] (x3b) to (x2b);

  \draw [->] (x1b) to (x0a);
  \draw [->] (x2b) to (x1a);
  \draw [->] (x3b) to (x2a);

  \draw [->] (x0a) to (x0b);
  \draw [->] (x1a) to (x1b);
  \draw [->] (x2a) to (x2b);
  \draw [->, dashed] (x3a) to (x3b);

  \draw [<-] (x-1a) to (x0a);
  \draw [<-] (x-2a) to (x-1a);
  \draw [<-] (x-3a) to (x-2a);

  \draw [<-] (x-1b) to (x0b);
  \draw [<-] (x-2b) to (x-1b);
  \draw [<-] (x-3b) to (x-2b);

  \draw [->] (x-1b) to (x0a);
  \draw [->] (x-2b) to (x-1a);
  \draw [->] (x-3b) to (x-2a);

  \draw [->] (x0a) to (x0b);
  \draw [->] (x-1a) to (x-1b);
  \draw [->] (x-2a) to (x-2b);
  \draw [->, dashed] (x-3a) to (x-3b);

  \draw [<-] (x2a) to (y-a);
  \draw [<-] (y-a) to (x3a);

  \draw [->] (yb) to (x0b);
  \draw [->] (x-1b) to (yb);

  \draw [<-] (x-2a) to (ya);
  \draw [<-] (ya) to (x-3a);

  \draw [->] (y-b) to (x0b);
  \draw [->] (x1b) to (y-b);

\draw[yshift=-2.3cm]
  node[below,text width=9cm] 
  {
  Figure 18f. The quiver and functions for the cluster algebra on $\Conf_4 \A_{G}$ after the sixth and final stage of mutation.
  };

\end{tikzpicture}
\end{center}

\subsection{The space $\X_{G,S}$}

Let us mention here that once we have constructed the cluster algebra structure on the spaces $\Conf_m \A_G$, it is straightforward to derive the $\X$-variety structure on $\Conf_m \B_G$. The general framework is explained in \cite{FG2}, and some details specific to the case of configurations of principal flags and configurations of flags can be found in \cite{Le}. We give a short summary here which is adapted to the general case. The statements below are conditional upon the conjectures from section 3, and thus hold for types $A, B, C, D$ and $G_2$.

\begin{theorem} $\Conf_m \B_G$ has the structure of a cluster $\X$-variety. Together with the cluster structure that we have constructed on $\Conf_m \A_G$, we obtain a cluster ensemble in the sense of \cite{FG2}.
\end{theorem}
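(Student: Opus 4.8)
The plan is to upgrade the birational map $p\colon \A_\Sigma\to\X_\Sigma$ attached to each seed in Section~\ref{cluster} into a geometric identification of the amalgamated $\X$-cluster variety with $\Conf_m\B_G$; once the $\A$- and $\X$-sides are realized as $\Conf_m\A_G$ and $\Conf_m\B_G$ with $p$ the natural ``forget the decorations'' map, the pair is a cluster ensemble essentially by the definition in \cite{FG2}.

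I would begin with a single triangle. Given the seed $\Sigma$ on $\Conf_3\A_G$ built from a reduced word for $w_0$, the unfrozen vertices are exactly the face vertices, and by Conjecture~\ref{minorweight} (and Conjecture~\ref{edgeweight} for the edge bookkeeping) each cluster variable $A_j$ lies in a single graded piece $(\lambda_j,\mu_j,\nu_j)$ of $\mathcal{O}(\Conf_3\A_G)$. Equation~\ref{face} is precisely the assertion that for each unfrozen $i$ the monomial $p^*(X_i)=\prod_j A_j^{B_{ij}}$ has total weight $(0,0,0)$, hence descends to the quotient $\Conf_3\B_G$ of $\Conf_3\A_G$ by the residual torus action on the decorations. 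So $p$ factors through the forgetful map $\Conf_3\A_G\dashrightarrow\Conf_3\B_G$, and I would check that the induced map $\Conf_3\B_G\dashrightarrow\X_\Sigma$ is birational: the count of face vertices matches $\dim\Conf_3\B_G$, and the $X_i$ generate the function field, which is where one matches them against the Fock--Goncharov positive coordinates on $\Conf_3\B_G$ from \cite{FG1}. The case of $\Conf_4\B_G$ is handled the same way, for each of the two triangulations.

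Next I would pass to a general surface by amalgamation. On the $\A$-side the cluster structure on $\Conf_m\A_G$ and on $\A_{G,S}$ was obtained by gluing triangle seeds, identifying edge vertices in pairs and unfreezing them; on the $\X$-side, amalgamation of cluster varieties (\cite{FG3}) runs compatibly, multiplying the two $\X$-coordinates at each glued vertex. The one new point to verify is that the $\X$-coordinates at the newly unfrozen edge vertices are again functions on $\Conf_m\B_G$, i.e.\ have weight zero at every marked point; this is exactly Equation~\ref{edge0}, where the contribution $S_{CA,B,\omega_k}$ from one triangle cancels $S_{AC,D,\omega_k^\vee}$ from the adjacent one, together with Equation~\ref{edge1}. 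Since amalgamation of $\A$-varieties, amalgamation of $\X$-varieties, and gluing of configuration spaces are all local along edges, the triangle-wise identifications of the previous paragraph patch to a birational isomorphism $\X_{\Sigma_{\mathcal{T}}}\xrightarrow{\,\sim\,}\Conf_m\B_G$ for each triangulation $\mathcal{T}$; the vertices that remain frozen are those on the boundary edges of the $m$-gon, matching the fact that boundary edges carry no function on $\X_{G',S}$. Finally, the commutative square relating $p,p'$ under $\mu_k$ from Section~\ref{cluster} makes these identifications compatible with mutation, so they glue over all seeds to a single birational identification of the $\X$-cluster variety with $\Conf_m\B_G$, and the cluster-ensemble structure is then immediate.

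The main obstacle is the birationality claim: well-definedness of the $\X$-coordinates is formal once Conjectures~\ref{minorweight} and~\ref{edgeweight} are known, but showing that they cut out a dense positive chart on $\Conf_m\B_G$, rather than merely landing in it, forces one to identify them with the known positive coordinates of \cite{FG1}, which is where the explicit type-by-type input enters --- Proposition~\ref{Aminors} and its extension in type $A$, the computations of \cite{Le} in types $B,C,D$, and the computations of the present section in type $G_2$. The statement is therefore conditional on the conjectures of Section 3 in general, and unconditional in types $A,B,C,D,G_2$, as already noted.
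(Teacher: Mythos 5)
Your overall architecture matches the paper's: show the monomials $p^*(X_i)=\prod_j A_j^{B_{ij}}$ are $H^3$-invariant via Equation~\ref{face} so they descend to $\Conf_3\B_G$, then handle gluing, then invoke mutation-compatibility of $p$. But the two birationality steps --- which you correctly identify as the crux --- are resolved by the paper in a concretely different way than you propose, and your outline leaves a gap at the second one. For a single triangle, the paper does not argue by dimension count or by matching against the positive coordinates of \cite{FG1}; it writes down an explicit parameterization of $\Conf_3\B_G$ by triples $(B^+,u^-B^+,B^-)$ with $u^-$ given, up to $H$-conjugation, as $\rho(b^-)$ for an explicit product $b^-=\prod_j F_{i_j}H_{\omega^{\vee}_{i_j}}(X_j^{-1})\cdot F_n\cdots F_1$ coming from the double Bruhat cell factorization of $G^{w_0,e}$ in \cite{FG3} and \cite{W}. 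That exhibits the face $X_j$ directly as coordinates on an open set, with no appeal to \cite{FG1}.

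The more serious issue is the edge step. You say ``the one new point to verify is that the $\X$-coordinates at the newly unfrozen edge vertices are again functions on $\Conf_m\B_G$,'' i.e.\ Equation~\ref{edge0}. That only gives well-definedness; it does not show that the glued torus is birational to $\Conf_4\B_G$. What is actually needed --- and what the paper proves --- is that the edge $\X$-coordinates parameterize the $H$-torsor of gluings in the decomposition $\Conf_4\B_G\simeq\Conf_3\B_G\times H\times\Conf_3\B_G$. The paper establishes this via the shear action $h:(B^+,u^-B^+,B^-,u^+B^-)\mapsto(B^+,u^-B^+,B^-,hu^+B^-)$ and the Proposition that $h$ acts on the edge coordinate $X_j$ by $\alpha_j(h)$; the proof of that Proposition is exactly where Conjecture~\ref{edgeweight} (each flag on the sheared edge contributes $\tfrac{\alpha_j}{2}(h)$) enters, and the conclusion that the edge coordinates exhaust the torsor uses that $G'$ is adjoint, so the simple roots $\alpha_j$ span the weight lattice. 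Your plan never produces this torsor computation, so as written it yields only a well-defined map $\Conf_m\B_G\dashrightarrow\X_{\Sigma}$ on faces plus edges, not a birational one. Your fallback of identifying everything with the positive atlas of \cite{FG1} could in principle close both gaps, but it is a different (and uncarried-out) argument from the one in the paper, and it would obscure the specific role that Conjecture~\ref{edgeweight} and the adjointness of $G'$ play in making the edge coordinates into shear coordinates.
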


Suppose we have a cluster $\A$-variety with seed $\Sigma=(I, I_0,B, d)$. Then for every non-frozen index $i \in I$, we get a cluster variable $X_i$. We have a map $p: \A_{\Sigma} \to \X_{\Sigma}$ given by 

$$p^*(X_i) = \prod_{j \in I}A_j^{B_{ij}}.$$

The functions $p^*(X_i)$, which a priori live on $\Conf_3 \A_G$, turn out to descend to $\Conf_3 \B_G$. The reason is that the cluster functions $A_j$ that we constructed on $\Conf_3 \A_G$ were invariants of tensor products:

$$A_j \in [V_{\lambda} \otimes V_{\mu} \otimes V_{\nu}]^G.$$

Now recall that $G/U$ has a left action of $H$, the Cartan subgroup. The functions on $G/U$ decompose as a representation of $G$ as
$$\bigoplus_{\lambda \in \Lambda_+} V_{\lambda}.$$
Then we have that $H$ acts on the summand $V_{\lambda}$ via the weight $\lambda$.

Correspondingly, on $\Conf_3 \A_G$ there is an action of $H^3$. On the summand 
$$[V_{\lambda} \otimes V_{\mu} \otimes V_{\nu}]^G,$$
$(h_1, h_2, h_3)$ acts by $\lambda(h_1)\mu(h_2)\nu(h_3).$

Recall that the quiver for our cluster algebra was chosen precisely so that 
\begin{equation} \sum_{j \in I}B_{ij} (\lambda_j, \mu_j, \nu_j) =0
\end{equation}

This forces the action of $H^3$ on the functions $X_i$ to be trivial. Thus the functions $X_i$ naturally live on $\Conf_3 \B_G$.

Let us check that the torus $\X_{\Sigma}$ is birational to $\Conf_3 \B_G$. Since the functions $X_i$ can be viewed as functions on $\Conf_3 \B_G$, we get a map $p': \Conf_3 \B_G \to \X_{\Sigma}$. 

We will now show that the functions $X_i$ parameterize an open set in $\Conf_3 \B_G$. To do this, we will use the parameterization of double Bruhat cells $G^{u,v}$ from \cite{FG3} and \cite{W}, applied to the particular Bruhat cell $G^{w_0,e}$. 

Recall that the functions on $\Conf_3 \A_G$ were associated to a reduced word composition for $w_0$. Suppose the Weyl group for $G$ is generated by $n$ simple reflections $s_i$, $1 \leq i \leq n$. Take a reduced word 

$$w_0=s_{i_1} s_{i_2} s_{i_3} \cdots s_{i_{K-1}} s_{i_K}$$

Recall that if the simple reflection $s_i$ occurs $a_i$ times, then there are $a_i+1$ cluster variables attached to the simple reflection $s_i$. The first and last of these are frozen, and do not correspond to $\X$-variables. Thus we can naturally put the $\X$-variables in bijection with the simple reflections $s_{i_j}$ except those $s_{i_j}$ which are the leftmost occurence of some simple reflection. For simplicity, let us suppose that $s_{i_1}, \dots, s_{i_n}$ are $s_1, \dots, s_n$ is some order. Then we may put the $X$ variables in bijection with $s_{i_j}$ for $n < j \leq K$. Let $X_{j}$ be the $\X$-function attached to the simple reflection $s_{i_j}$ for $n < j \leq K$.

It is known that there is a parameterization of $\Conf_3 \B_G$ given by three flags of the form
$$(B^+ ,u^-B^+, B^-).$$
Here $u^-$ is determined up to conjugation by $H$.

We introduce some notation. Let $b^-$ be an element of $B^-$. There is a natural map 
$$\pi: B^- \rightarrow H=B/[B,B]$$
A choice of opposite flags $B^+$ and $B^-$ gives an inclusion
$$i: H \rightarrow B^-.$$
We set
$$\rho(b^-):=i(\pi(b^-))^{-1}b^-.$$
This associates to each element of $B^-$ an element of $U^-$. We will be interested in $\rho(b^-)$ up to the adjoint action of $H$.

Then the coordinates $X_{j}$ give a parameterization of $u^-$ by the following formula:
$$u^-=\rho(b^-),$$
where
\begin{gather*}
b^-= \big( \prod_{j=n+1}^{K} F_{i_{j}} H_{\omega^{\vee}_{i_{j}}}(X_{j}^{-1}) \big) F_n\dots F_3 F_2 F_1 \\
=F_{i_{n+1}} H_{\omega^{\vee}_{i_{n+1}}}(X_{n+1}^{-1}) F_{i_{n+2}} H_{\omega^{\vee}_{i_{n+2}}}(X_{n+2}^{-1}) \dots \\
F_{i_{K-1}}H_{\omega^{\vee}_{i_{K-1}}}(X_{K-1}^{-1})F_{i_K}H_{\omega^{\vee}_{i_K}}(X_{K}^{-1}) F_n\dots F_3 F_2 F_1.
\end{gather*}

Let us explain the notation above. Here, $F_i$ are the usual generators of $U^-$ associated to the simple roots. $\omega^{\vee}_i$ is the fundamental weight attached to the $i^{th}$ node of the Dynkin diagram for the simply connected form of $G^{\vee}$, the Langlands dual of $G$. The weights for $G^{\vee}$ are the coweights for the adjoint form $G'$ of $G$. Then $H_{\omega^{\vee}_i}$ is the cocharacter attached to this coweight.

In this way, we immediately see that the functions $X_{j}$ parameterize an open set in $\Conf_3 \B_G$.

Next, we then need to check that for any gluing of triangles to get a structure of an $\A$-space on $\Conf_4 \A_G$, the $\X$-coordinates on the edge gluing the two triangles parameterize gluings of configurations in $\Conf_3 \B_G$ to get a configuration in $\Conf_4 \B_G$. In other words, we would like to show that there is an equivalence
$$\Conf_4 \B_G \simeq \Conf_3 \B_G \times H \times \Conf_3 \B_G.$$
Let us examine the $\X$-coordinates associated with the $\A$-space $\Conf_4 \A_G$. We saw above that the face $\X$-coordinates parameterize the two copies of $\Conf_3 \B_G.$ It remains to show that the edge $\X$-coordinates parameterize the space of gluings, which can be parameterized by $H$ (the space of gluings is naturally a torsor for $H$).

Explicitly, we $H$ acts by shearing on $\Conf_4 \B_G$ in the following way:
$$h: (B^+, u^-B^+, B^-, u^+B^-) \rightarrow (B^+, u^-B^+, B^-, hu^+B^-).$$
We can now easily check that the edge $\X$-coordinates form a torsor for $H$. Suppose that the function $A_j$ is the edge function lying in the invariant space 
$$[V_{\omega_j} \otimes V_{\omega_j^{\vee}}]^G.$$
Call $X_j$ the corresponding $\X$-coordinate.

\begin{prop} An element $h \in H$ acts on $X_{j}$ by $\alpha_{j}(h)$, where $\alpha_j$ is the $j$-th simple root of $G'$.
\end{prop}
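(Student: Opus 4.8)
The plan is to pin down how $h \in H$ acts on the edge $\mathcal{X}$-coordinate $X_j$ by combining two ingredients: the explicit formula $p^*(X_i) = \prod_{k \in I} A_k^{B_{ik}}$ together with the $H^4$-weight of each $\mathcal{A}$-function appearing in this product, and the geometric shearing action on $\operatorname{Conf}_4 \B_G$. First I would recall that on $\operatorname{Conf}_4 \A_G$ there is a natural action of $H^4$, acting on a graded piece $[V_{\lambda} \otimes V_{\mu} \otimes V_{\nu} \otimes V_{\rho}]^G$ via $\lambda(h_1)\mu(h_2)\nu(h_3)\rho(h_4)$. The edge function $A_j$ on the edge $AC$ lies, by construction, in $[V_{\omega_j} \otimes V_{\omega_j^{\vee}}]^G$ with weight $\omega_j^{\vee}$ at the $A$-vertex and $\omega_j$ at the $C$-vertex (following the conventions set up after equation~\ref{edge1}). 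The shearing action $h:(B^+, u^-B^+, B^-, u^+B^-) \to (B^+, u^-B^+, B^-, hu^+B^-)$ corresponds to acting by $h$ at exactly one of the four flags — the $D$-vertex in the quadrilateral picture — and trivially at the others; so I must track which flag plays the role of $D$ and confirm the edge $AC$ separates $\{B\}$ from $\{D\}$.

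Next I would compute $h^* X_j$ directly from $p^*(X_j) = \prod_{k} A_k^{B_{jk}}$. Since $h$ acts on each $A_k$ by a character determined by $A_k$'s weight at the $D$-vertex (call it $\rho_k$, with $\rho_k = 0$ for every function living in the triangle $ABC$ not touching $D$), we get $h^* X_j = \big(\prod_k \rho_k(h)^{B_{jk}}\big) X_j = \big(\sum_k B_{jk}\rho_k\big)(h) \cdot X_j$. The weight $\sum_k B_{jk}\rho_k$ is precisely the ``$D$-component'' of the vector $S_{AC,D,\omega_j^{\vee}} = \sum_k B_{jk}(\lambda_k, 0, \mu_k, \nu_k)$ introduced before equation~\ref{edge1}; but by equation~\ref{edge1} that sum has the form $(\lambda, 0, \mu, 0)$, which naively would force the $D$-component to vanish. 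The resolution — and the step I expect to be the main obstacle — is that for the edge $\mathcal{X}$-coordinate one must use the fractional arrows between frozen vertices, and the relevant weight computation is the one governed by Conjecture~\ref{edgeweight}: $S_{AB,C,\omega_k}$ has weight $\frac{\alpha_k}{2}$ at $A$ and $\frac{w_0(\alpha_k)}{2}$ at $B$. Summing the contributions from the two triangles $ACD$ and $ABC$ that meet along the edge — the $D$-contribution comes entirely from $S_{AC,D,\omega_j^{\vee}}$, whose weight at $D$ is $\frac{\alpha_{j^*}}{2}$ for $\omega_{j^*} = \omega_j^{\vee}$, combined with the weight $\frac{w_0(\alpha_j)}{2}$ from the $C$-side of the other triangle — and using $\alpha_{j^*} = -w_0(\alpha_j)$ one finds the net character by which $h$ acts on $X_j$ is $\frac{\alpha_j}{2} + \frac{\alpha_j}{2} = \alpha_j$, the $j$-th simple root.

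Concretely I would carry this out in the following order: (1) fix the quadrilateral labelling so the edge $AC$ is the glued edge with triangles $ABC$ and $ACD$, and write out the $H^4$-action and the shearing action explicitly, identifying $D$ as the sheared vertex; (2) express $h^*X_j$ via $p^*$ and reduce it to a character of $H$ given by a sum of weights weighted by the entries $B_{jk}$; (3) invoke Proposition following Conjecture~\ref{edgeweight} (the classical-group case, which includes the cases of interest) to evaluate $S_{AC,D,\omega_j^{\vee}}$ and the matching contribution from $ABC$, being careful about the half-integer/fractional arrows and the sign bookkeeping in $\alpha_{j^*} = -w_0(\alpha_j)$; (4) add the two half-root contributions to conclude $h$ acts by $\alpha_j(h)$. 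The subtle point throughout is that $X_j$ is only a genuine $\mathcal{X}$-coordinate \emph{after} gluing, so the fractional arrows that were invisible on a single triangle become real integral arrows contributing to $B_{jk}$, and one must make sure the weight accounting in steps (2)–(3) uses the post-gluing quiver; this is exactly where equations~\ref{edge1}–\ref{edge2} and Conjecture~\ref{edgeweight} do the essential work, so the proof is ultimately a bookkeeping argument resting on those earlier results rather than a new computation.
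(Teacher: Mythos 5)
There is a genuine gap in step (2), and it is precisely the step the paper's proof is designed to get around. You propose that $h$ acts on each cluster variable $A_k$ by the character given by $A_k$'s weight $\rho_k$ at the vertex $D$, so that $h^*X_j$ is multiplied by $\bigl(\sum_k B_{jk}\rho_k\bigr)(h)$. But the shearing map $(B^+,u^-B^+,B^-,u^+B^-)\mapsto(B^+,u^-B^+,B^-,hu^+B^-)$ genuinely moves the fourth flag; it is not the restriction to the fourth factor of the $H^4$ decoration action on $\Conf_4\A_G$. If it were, the answer would be exactly the total $D$-weight of $X_j$, which vanishes: the functions of the triangle $ABC$ do not involve $D$ at all, and equation~\ref{edge1} says $S_{AC,D,\omega_j^{\vee}}=(\lambda,0,\mu,0)$, i.e.\ the $D$-component of $\sum_k B_{jk}(\lambda_k,0,\mu_k,\nu_k)$ is zero --- indeed it must vanish for $X_j$ to descend to $\Conf_4\B_G$. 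You notice this vanishing, but your proposed resolution (fractional arrows, plus assigning $S_{AC,D,\omega_j^{\vee}}$ a weight $\tfrac{\alpha_{j^*}}{2}$ ``at $D$'' and adding a contribution ``from the $C$-side of the other triangle'') contradicts both equation~\ref{edge1} and the statement of Conjecture~\ref{edgeweight}, which places the weights $\tfrac{\alpha_k}{2}$ and $\tfrac{w_0(\alpha_k)}{2}$ at the two \emph{endpoints of the edge}, not at the opposite or fourth vertex; and the functions of $ABC$ cannot contribute anything, since shearing fixes all three of their arguments. That your final sum comes out to $\alpha_j$ is an artifact of the bookkeeping, not a consequence of a valid computation.

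The missing idea is the paper's use of $G$-invariance to transfer the action off of $D$. One lifts to explicit representatives $(U^+,u^-U^+,w_0U^-,u^+w_0U^-)$ and observes that on the triangle containing $D$ the sheared configuration $(U^+,w_0U^-,hu^+w_0U^-)$ equals $(h^{-1}U^+,h^{-1}w_0U^-,u^+w_0U^-)$. Since $h^{-1}$ normalizes the relevant unipotent radicals, this is the \emph{same} underlying triple of flags with the decorations at the two endpoints $A$ and $C$ of the glued edge rescaled; the other triangle is untouched. The action on $X_j$ is therefore computed from the $A$- and $C$-weights of $S_{AC,D,\omega_j^{\vee}}$ alone, and Conjecture~\ref{edgeweight} (together with $\alpha_{j^*}=-w_0(\alpha_j)$ and the $w_0$-twist in the decoration at the $w_0U^-$ vertex) yields a contribution of $\tfrac{\alpha_j}{2}(h)$ from each of the two edge flags, giving $\alpha_j(h)$. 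Your proposal has all the right inputs --- the $p$-map formula, the gradings, Conjecture~\ref{edgeweight} --- but without the $G$-equivariance step the computation either returns the trivial character or rests on weight attributions that the earlier equations rule out.
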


Note that because $G'$ is adjoint, the simple roots $\alpha_j$ span the weight lattice.

\begin{proof} The proof reduces to a computation. The crux of the computation is Conjecture~\ref{edgeweight}, which can be verified by hand for the group of type $G_2$ using the identification of the weights of the cluster functions given in the previous section. We can lift the configuration 
$$(B^+, u^-B^+, B^-, u^+B^-) \in \Conf_4 \B_G$$
to get the configuration $$(U^+, u^-U^+, w_0U^-, u^+ w_0 U^-) \in \Conf_4 \A_G.$$ Then we lift the action of $h$ to get the map 
$$h: (U^+, u^-U^+, w_0U^-, u^+ w_0 U^-) \rightarrow (U^+, u^-U^+, w_0U^-, hu^+ w_0 U^-).$$
We compute the action of $h$ on the $\X$-variables via the corresponding action on $\A$-variables. If we triangulate, one triangle is unaffected. On the other triangle we compute the $\A$ variable functions on the configuration $(U^+, w_0U^-, hu^+ w_0 U^-) \in \Conf_3 \A_G$. But this is the same as the configuration 
$$(h^{-1}U^+, h^{-1}w_0U^-, u^+ w_0 U^-) \in \Conf_3 \A_G,$$
and Conjecture~\ref{edgeweight} tells us that we have a contribution of $\frac{\alpha_{j}}{2}(h)$ from each of the flags on the edge we are shearing.

A similar theorem was proved for $G$ of type $A, B, C, D$ in \cite{Le}. 

\end{proof}

Thus the edge coordinates on the $\X$ space give the usual ``shear'' coordinates. The usual cutting and gluing arguments allow us to conclude the following:

\begin{theorem} The spaces $\A_{G,S}$ and $\X_{G',S}$ together have the structure of a cluster ensemble when $G$ has type $G_2$.
\end{theorem}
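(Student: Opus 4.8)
The plan is to assemble the final theorem from the pieces developed throughout Section 5, combined with the general amalgamation machinery of Section 3.4. First I would recall that, by the explicit computations of Section 5.1 (Figures 12 and 13), we have a cluster algebra structure on $\Conf_3 \A_G$ for $G$ of type $G_2$: Conjecture~\ref{minorweight} holds here because the functions were obtained by pulling back generalized minors from $\Conf_3 \A_{Spin_8}$ along the folding embedding $j$, and those minors lie in single tensor-invariant spaces by Proposition~\ref{Aminors} applied to $Spin_8 \subset SL_8$ (or by the type-$D$ result of \cite{Le}); Conjecture~\ref{edgeweights} holds by the explicit weight bookkeeping in Figure 13. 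Hence the conditional constructions of Section 3 apply unconditionally in type $G_2$, giving a well-defined seed on $\Conf_3 \A_G$ together with the fractional gluing arrows.

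Next I would invoke the $S_3$-symmetry and flip computations of Sections 5.2--5.6. The mutation sequences~\ref{13} and~\ref{23} realize the transpositions $(13)$ and $(23)$, their composite realizes the third transposition $(12)$ (Section 5.4), and the six-stage sequence~\ref{flip} realizes the flip on $\Conf_4 \A_G$. Together these show that all the clusters attached to the various choices of triangulation, vertex ordering, and reduced word for $w_0$ lie in a single cluster algebra — i.e., the conjecture preceding Remark stating ``For any reductive group $G$\dots'' holds when $G = G_2$. By the amalgamation procedure of Section 3.4, gluing these $\Conf_3 \A_G$ seeds along an ideal triangulation of $S$ produces a cluster algebra structure on $\Conf_m \A_G$ and, more generally, on $\A_{G,S}$; mapping-class-group equivariance follows because flips and rotations are now realized by mutation.

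For the $\X$-side, I would apply the results of Section 5.7: the map $p^*(X_i) = \prod_j A_j^{B_{ij}}$ sends the $\X$-coordinates into functions on $\Conf_3 \B_G$ precisely because equation~\ref{face} kills the $H^3$-action, and the double Bruhat cell parameterization of \cite{FG3}, \cite{W} shows these $\X$-coordinates give an open chart on $\Conf_3 \B_G$. The Proposition in Section 5.7 (whose proof rests on Conjecture~\ref{edgeweight}, verified by hand in type $G_2$) shows that the edge $\X$-coordinates are the standard shear coordinates, giving the decomposition $\Conf_4 \B_G \simeq \Conf_3 \B_G \times H \times \Conf_3 \B_G$. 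Amalgamating over a triangulation then yields the cluster $\X$-variety structure on $\X_{G',S}$ compatible via $p$ with the $\A$-structure, so the pair $(\X_{G',S}, \A_{G,S})$ is a cluster ensemble in the sense of \cite{FG2}.

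The main obstacle is not any single deep theorem but the verification that the seed produced by amalgamating triangles is independent of the auxiliary choices and is genuinely mapping-class-group equivariant; concretely this is the content of connecting all $72$ clusters on $\Conf_4 \A_G$ by mutations, which reduces to the explicit sequences~\ref{13},~\ref{23}, and~\ref{flip} together with the cyclic-shift compatibility flagged in the Remark after Figure 15c. Checking that $(13)(23)$ realizes the twisted cyclic shift $T$ and that the signs in the quasi-homomorphism for $(12)$ work out — the subtlety noted in Section 3.3 — is the delicate bookkeeping step, but for $G_2$ it is finite and has effectively been carried out in the figures; once it is in place the cutting-and-gluing argument is routine.
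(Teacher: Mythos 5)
Your proposal is correct and follows essentially the same route as the paper: the paper likewise assembles this theorem from the folded $\Conf_3 \A_{G}$ seed of Section 5.1, the $S_3$ and flip mutation sequences of Sections 5.2--5.5, and the analysis of the map $p$ and the shear coordinates in Section 5.7, concluding by the usual cutting-and-gluing argument. The only slight inaccuracy is the parenthetical suggestion that Conjecture~\ref{minorweight} could be checked via Proposition~\ref{Aminors} applied to $Spin_8 \subset SL_8$; the paper instead relies on the type $D$ case proved in \cite{Le}, transported through the folding embedding, which is the alternative you also name.
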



\begin{thebibliography}{99}

\bibitem[BFZ]{BFZ} A. Berenstein, S. Fomin, A. Zelevinsky. Cluster algebras III: Upper bounds and double Bruhat cells. Duke Math. J. 126 (2005), No. 1. math.RT/0305434.

\bibitem[BZ]{BZ} A. Berenstein and A. Zelevinsky. Total positivity in Schubert varieties. Commentarii Mathematici Helvetici Volume 72, Issue 1, 1997, pp. 128–166.

\bibitem[FG1]{FG1} V.V. Fock, A.B. Goncharov. Moduli spaces of local systems and higher Teichmuller theory. Publ. Math. IHES, n. 103 (2006) 1-212. math.AG/0311149.

\bibitem[FG2]{FG2} V.V. Fock, A.B. Goncharov. Cluster ensembles, quantization and the dilogarithm. Ann. Sci. Ecole Norm. Sup. vol 42, (2009) 865-929. math.AG/0311245.

\bibitem[FG3]{FG3} V.V. Fock, A.B. Goncharov. Cluster $\X$-varieties, amalgamation and Poisson-Lie groups. Algebraic Geometry and Number Theory, In Honor of Vladimir Drinfeld on his 50$^{th}$ birthday. Birkhauser, (2006) Boston, 27-68. math.RT/0508408

\bibitem[FST]{FST} S. Fomin, M. Shapiro, D. Thurston. Cluster algebras and triangulated surfaces. Part I: Cluster complexes. Acta Mathematica 201 (2008), 83-146.

\bibitem[FZ]{FZ} S. Fomin and A. Zelevinsky. Double Bruhat cells and total positivity. J. Amer. Math. Soc. 12 (1999), 335-380

\bibitem[F]{F} C. Fraser. Quasi-homomorphisms of cluster algebras. arXiv:1509.05385

\bibitem[GS]{GS} A.B. Goncharov, L. Shen. Geometry of canonical bases and mirror symmetry. Invent. Math. 202 (2015) 487–633.

\bibitem[GS2]{GS2} A.B. Goncharov, L. Shen. Donaldson-Thomas transformations of moduli spaces of G-local systems. arXiv:1602.06479

\bibitem[Hen]{Hen} A. Henriques. An action of the cactus group. Mathematisches Forschungsinstitut Oberwolfach, Report No. 23/2007. Poisson Geometry and Applications. arXiv:0705.3000 

\bibitem[Le]{Le} I. Le. Cluster Structures on Higher Teichmuller Spaces for Classical Groups. arXiv:1603.03523

\bibitem[Lu]{Lu} G. Lusztig. Total positivity in reductive groups, Lie theory and geometry, Progr. Math., 123, Birkhauser Boston, Boston, MA, (1994), 531-568.

\bibitem[W]{W} H. Williams. Cluster Ensembles and Kac-Moody Groups. Adv. Math. 247 (2013), pp. 1-40

\end{thebibliography}
\end{document}